\numberwithin{equation}{section}
\newtheorem{Theorem}{Theorem}[section]
\newtheorem{Lemma}[Theorem]{Lemma}
\newtheorem{Proposition}[Theorem]{Proposition}
\newtheorem{Assumption}{H.\!\!}
\theoremstyle{definition}
\newtheorem{Example}{Example}[section]
\theoremstyle{remark}
\newtheorem{Remark}{Remark}[section]
 \def\p{\partial} 
\def\to{\rightarrow}
 \def\ol{\overline}    
\def\Om{\Omega}  \def\om{\omega} 
\newcommand{\q}{\quad}
\def\l{\label}    \def\fa{\forall}
\def\b{\beta}  \def\a{\alpha} 
\def\eps{\varepsilon}
 \def\t{\times}  
\def\ms{\medskip}
\def \la{\langle} \def\ra{\rangle}
\def\cA{\mathcal{A}}
\def\cF{\mathcal{F}}
\def\cH{\mathcal{H}}
\def\cI{\mathcal{I}}
\def\cP{\mathcal{P}}
\def\cS{\mathcal{S}}
\def\cW{\mathcal{W}}
\def\cX{\mathcal{X}}
\def\cY{\mathcal{Y}}
\def\d{{\mathrm{d}}}
\def\bA{{\textbf{A}}}
\def\sB{\mathbb{B}}
\def\sE{{\mathbb{E}}}
\def\sF{{\mathbb{F}}}
\def\sI{{\mathbb{I}}}
\def\sN{{\mathbb{N}}}
\def\sP{\mathbb{P}}
\def\sR{{\mathbb R}}
\DeclareMathOperator*{\argmin}{arg\,min}
\DeclareMathOperator*{\esssup}{ess\,sup}
\newcommand{\lc}
{\mathrel{\raise2pt\hbox{${\mathop<\limits_{\raise1pt\hbox
{\mbox{$\sim$}}}}$}}}
\newcommand{\gc}
{\mathrel{\raise2pt\hbox{${\mathop>\limits_{\raise1pt\hbox{\mbox{$\sim$}}}}$}}}
\newcommand{\ec}
{\mathrel{\raise2pt\hbox{${\mathop=\limits_{\raise1pt\hbox{\mbox{$\sim$}}}}$}}}
\def\bb{\begin{equation}} \def\ee{\end{equation}}
\def\bbn{\begin{equation*}} \def\een{\end{equation*}}
\def\beqn{\begin{eqnarray}}  \def\eqn{\end{eqnarray}}
\def\beqnx{\begin{eqnarray*}} \def\eqnx{\end{eqnarray*}}
\def\bn{\begin{enumerate}} \def\en{\end{enumerate}}
\def\bd{\begin{description}} \def\ed{\end{description}}
\begin{document}

\title{
Optimal regularity of open-loop mean field controls and their piecewise constant approximation
}

\author{
Christoph Reisinger\thanks{
Mathematical Institute, University of Oxford, Oxford OX2 6GG, UK
 ({\tt christoph.reisinger@maths.ox.ac.uk, 
wolfgang.stockinger@maths.ox.ac.uk})}
\and
Wolfgang Stockinger\footnotemark[1]
\and
Yufei Zhang
\thanks{Department of Statistics, London School of Economics and Political Science, Houghton Street, London, WC2A 2AE, UK
({\tt y.zhang389@lse.ac.uk})}
}
\date{}

\maketitle

\noindent\textbf{Abstract.} 
We consider the control of McKean-Vlasov dynamics whose coefficients have mean field interactions in the state and control. We show that for a class of linear-convex mean field control  problems, the unique optimal open-loop control admits the optimal $1/2$-H\"{o}lder regularity in time. Consequently, we prove that the value function can be approximated by one with piecewise constant controls and discrete-time state processes arising from Euler-Maruyama time stepping, up to an order $1/2$ error, and the optimal control
can be approximated
 up to an order $1/4$ error. These results are novel even for the case without mean field interaction.

\medskip
\noindent
\textbf{Key words.} 
Controlled McKean--Vlasov diffusion,
path regularity,
error estimate,
piecewise constant policy,
time discretization, 
mean field forward-backward stochastic differential equation.

\ms
\noindent
\textbf{AMS subject classifications.} 
49N80, 49N60, 60H35, 65L70


\medskip


\section{Introduction}\l{sec:intro}

In this paper, we  study  
a class of mean field stochastic control problems where the  state
dynamics  and  cost functions
depend upon the joint law of the  state and the control processes.
%
%
%
 Let 
 $T>0$ be a  given  terminal time,
$(\Om,\cF,\sP)$ be a complete  probability space
on which a  $d$-dimensional Brownian motion $(W_t)_{t\in [0,T]}$
is defined,
 $\sF = (\cF_t)_{t\in [0,T]}$ be the natural filtration of $W$
 augmented with an independent $\sigma$-algebra $\cF_0$,
and 
 $\cA$ be the set of 
 square integrable 
$\sF$-progressively measurable processes
$\a=(\a_t)_{ t\in [0,T]}$ taking values in 
a nonempty closed convex set
$\bA\subset\sR^k$.
For any  initial 
state $\xi_0\in L^2(\cF_0;\sR^n)$
and control $\a\in \cA$,
 we consider  the state process governed by
the following controlled  McKean--Vlasov diffusion:
 $X^\a_0=\xi_0$ and 
\bb\l{eq:mfcE_fwd}
\d X^\a_t=
b(t,X^\a_t,\a_t,\sP_{(X^\a_t,\a_t)})\, \d t
+\sigma(t,X^\a_t,\sP_{X^\a_t})\, \d W_t,
\q t\in [0,T],
\ee
where $b$ and $\sigma$ are given (possibly unbounded) Lipschitz continuous functions 
taking values in $\sR^{n}$ and $\sR^{n\t d}$, respectively,
and $\sigma$ is 
possibly degenerate.
The value  function of the optimal control problem is defined by
\bb\l{eq:mfcE}
V(\xi_0)=\inf_{\a\in \cA} J(\a;\xi_0)
\q
\textnormal{with}
\;
J(\a;\xi_0)=\sE\bigg[
\int_0^T f(t,X^\a_t,\a_t,\sP_{(X^\a_t,\a_t)})\, \d t+g(X^\a_T,\sP_{X^\a_T})
\bigg],
\ee
where 
the running cost $f$ and terminal cost $g$
 are given real valued  functions of   at most  quadratic growth.
Above and hereafter,
$\sP_{U}$ stands for  the  law of a given random variable $U$. 

Such mean field control (MFC) problems 
with   interactions 
  through
 the joint distribution of the state and control processes
have  attracted an increasing interest due to the  emergence of the mean field game
theory and their numerous applications in various areas, including economics,
 biology and social interactions (see e.g.~\cite{
chassagneux2014,carmona2015,
bandini2016, pham2016,
carmona2018a,pham2018,
acciaio2019,carmona2019,gobet2019,
burzoni2020,lauriere2020}). 
In particular,
the solution to \eqref{eq:mfcE} 
describes
 large population equilibria   
 of interacting
individuals who  obey a common policy 
controlled by a   central planner.
Equations of the type \eqref{eq:mfcE} are also motivated 
by
control problems whose objective functions are evaluated under convex risk measures,
such as  the mean-variance portfolio selection problem in finance.
In the case that 
 the coefficients of the state  dynamics are  linear in the
state, control and measure variables, and
the cost functions are convex in these variables,
\eqref{eq:mfcE} is called 
a  linear-convex  MFC problem
and is the main focus of this paper.
Moreover, if 
the mean field interactions
enter  
 both 
the controlled dynamics 
and cost functions 
through the marginal law of the state only,
then \eqref{eq:mfcE} reduces to the MFC problems 
{(without control interactions)}
studied in 
\cite{chassagneux2014,carmona2015,carmona2018a,carmona2019}.

As explicit solutions to \eqref{eq:mfcE} are rarely available,
 numerical schemes for solving such control problems become vital.
A common strategy to obtain numerical approximations of   \eqref{eq:mfcE}  
is
to  discretize the control problem
on a given time grid 
 by using piecewise constant policy timestepping.
More precisely,
for any given 
 partition  $\pi=\{0=t_0<\cdots<t_N=T\}$ of $[0,T]$,
we shall approximate the control problem \eqref{eq:mfcE}
 by the following discrete-time control problem:
 \bb\l{eq:mfcE_constant}
V_\pi(\xi_0)=\inf_{\a\in \cA_\pi} J_\pi(\a;\xi_0),
\ee
where  
$\cA_{\pi}$ is the   subset of controls $\cA$ that are constant on each subinterval $[t_i,t_{i+1})$ in $\pi$:
\begin{align}\l{eq:A_pi}
\cA_{\pi}\coloneqq
\Big\{\alpha\in\cA:  
{\; \forall \omega \in \Omega } \;  \exists a_i\in \bA, \, i=0,\ldots, N-1,  \;  \text{ s.t. }  \alpha_s{(\omega)} \equiv \sum^{N-1}_{i=0} a_i \bm{1}_{[t_i,t_{i+1})}(s)
\Big\},
\end{align}
$J_\pi(\a;\xi_0)$ is the discretized cost functional defined by
\bb\l{eq:J_pi}
J_\pi(\a;\xi_0)\coloneqq\sE\bigg[\sum_{i=0}^{N-1}
\int_{t_i}^{t_{i+1}} f(t_i,X^{\a,\pi}_{t_i},\a_{t_i},\sP_{(X^{\a,\pi}_{t_i},\a_{t_i})})\, \d t+g(X^{\a,\pi}_T,\sP_{X^{\a,\pi}_T})
\bigg],
\ee
and $X^{\a,\pi}$ is  the discretized 
 controlled  process 
defined by the  Euler-Maruyama  approximation of  \eqref{eq:mfcE_fwd}:
$X^{\a,\pi}_0=\xi_0$ and  for all $i\in \{0,\ldots,N-1\}$, $X^{\a,\pi}_t=X^{\a,\pi}_{t_i}\bm{1}_{[t_i,t_{i+1})}(t)$ and
\bb\l{eq:mfcE_fwd_euler}
X^{\a,\pi}_{t_{i+1}}=X^{\a,\pi}_{t_{i}}
+
b(t_i,X^{\a,\pi}_{t_i},\a_{t_i},\sP_{(X^{\a,\pi}_{t_i},\a_{t_i})})\, (t_{i+1}-t_i)
+\sigma(t_i,X^{\a,\pi}_{t_i},\sP_{X^{\a,\pi}_{t_i}})\, (W_{t_{i+1}}-W_{t_{i}}).
\ee
The computational advantage of this approach 
comes from the fact that over
the time intervals in which the policy is constant, 
we only need to deal with Gaussian random variables with 
known mean and variance,
{
which provides 
the basis for 
designing efficient numerical methods 
to solve  MFC problems}. 
We refer the reader to 
\cite{reisinger2016,dumitrescu2019,picarelli2020} for 
piecewise constant policy timestepping
for classical control problems,
and to e.g.~\cite{carmona2019,gu2019,achdou2020} 
and  references  therein
for 
numerical methods for (discrete time) mean field control problems
and mean field games.
Note that in practice, one can  estimate the  marginal laws $\sP_{(X^{\a,\pi}_{t_i},\a_{t_i})}$ in \eqref{eq:mfcE_fwd_euler}  by a particle method (see e.g., \cite{carmona2019}).

Motivated by 
the above applications,
in this paper, we aim to address   to what extent
the continuous-time  MFC problem \eqref{eq:mfcE}
can be   approximated 
by discrete-time control problems \eqref{eq:mfcE_constant} arising from
 piecewise constant policy timestepping. 
In particular, we would answer the following two questions:
\begin{description}
\item[Q1]: What is the convergence rate of $|V_\pi(\xi_0)-V(\xi_0)|$ in terms of the stepsize $|\pi|$?
\item[Q2]: How does a (sub-)optimal control $\hat{a}_\pi$ of \eqref{eq:mfcE_constant}
approximate  the optimal control of $\hat{\a}$ of 
\eqref{eq:mfcE}?
\end{description}

The approximation error of value functions
was first  addressed in \cite{krylov1999}
for classical control problems 
(with controlled diffusion coefficients
but
without mean field interaction), 
in which it is shown that
 the value functions of controlled diffusion processes
(whose  coefficients are 
 Lipschitz continuous in space and $1/2$-H\"{o}lder continuous in time)
  can be
approximated with order $1/6$ error by those with controls which are constant on uniform time
intervals.
The convergence rate was then improved to
order  $1/4$ in \cite{jakobsen2019} under the same regularity assumptions. 
The analysis in \cite{krylov1999,jakobsen2019} combines  stochastic and analytic techniques,
which first estimates the local error 
for each subinterval by 
controlling  the generator of the controlled process, 
and then  aggregates the local error over time
by applying  It\^{o}'s lemma and a dynamic programming principle.
No convergence result for optimal controls 
has been provided.

Unfortunately, the arguments in \cite{krylov1999,jakobsen2019} cannot be  adapted to 
study piecewise constant policy approximation of \eqref{eq:mfcE}, mainly due to the following two reasons.
Firstly, 
controlling the  generator of the state process
 usually involves estimating 
 sup-norms of  high-order derivatives of the value functions,
which in turn requires 
the action set $\bA$ to be compact and 
all coefficients of the control problem  to be uniformly bounded  
(see \cite{krylov1999,jakobsen2019}).
Here, we allow 
the action set $\bA$ to be  unbounded,
the coefficients of \eqref{eq:mfcE_fwd} to be of linear growth,
  and 
 the cost functions of  \eqref{eq:mfcE} to be of quadratic growth,
in order to include the most commonly used linear-quadratic
models. 
Secondly, 
since the value function of a control problem is in general  non-differentiable,
\cite{krylov1999,jakobsen2019} first regularizes the (finite-dimensional) value function
and then balances the regularization error and  time discretization error.
However, it is well-known that 
one has to include the marginal distribution
 in the state of the system to  restore a  dynamic programming principle of \eqref{eq:mfcE}
 (see e.g.~\cite{pham2018}).
 This forces us to deal with an infinite-dimensional  generator
 and an infinite-dimensional value function,
 for which  there is no  known regularization technique 
 with quantifiable regularization error.

In fact, to the best of our knowledge, there is no published work on 
the accuracy of piecewise constant policy approximation 
for MFC problems
with general open-loop controls
 (i.e.,  controls that depend on the initial condition and noise
 as those in $\cA$).
A related work is \cite{carmona2019}, which 
restricts the class of admissible controls to be closed-loop  controls 
(i.e.,  controls that are deterministic functions of time and   state processes)
and analyzes the time discretization error for 
{special cases of 
linear-convex MFC problems \eqref{eq:mfcE} 
in which both $b$ and $f$ are independent of the law of controls}.
By assuming that the optimal feedback map is 
 Lipschitz continuous in time and twice-differentiable  in space with Lipschitz continuous derivatives
 (see Assumptions (B1), (C1)-(C3) in \cite{carmona2019}),
the authors show that 
the value functions of the discrete-time control problems converge to that of the original problem with order $1/2$. 
We remark that 
establishing such a  strong regularity of the feedback map is 
  a delicate and technical issue,
which usually requires
to analyze the classical solutions to an infinite-dimensional 
partial differential equation (PDE)
under the assumption that 
the cost functions are three-times differentiable  with bounded  Lipschitz continuous derivatives
(see  e.g.~\cite{chassagneux2014}
for the case where the diffusion coefficient is   constant  and all coefficients are time-independent).

\paragraph{Our work.}
This paper 
studies 
the time discretization error of
 linear-convex  MFC problems   \eqref{eq:mfcE}.
The main contributions  are:
\begin{itemize}
\item
%
We prove under suitable  conditions,
which are verified for
 different classes of
  MFC  problems
(see Examples \ref{example:mfc}, \ref{example:law_c} and \ref{example:lq}),
  that 
  \eqref{eq:mfcE} admits a unique optimal control, which can be characterized by 
  the unique  H\"{o}lder continuous solution to 
 an asociated coupled MV-FBSDE. 
Based on this solution characterization, 
we prove that 
the unique optimal control of   \eqref{eq:mfcE}
has the optimal time regularity,
which is $1/2$-H\"{o}lder continuous in time in the $L^p$-norm
(see Theorem \ref{TH:ControlRegularity}).
We further give conditions under which the optimal control of \eqref{eq:mfcE}
is deterministic.
Such time regularity results for 
optimal  controls
 are novel
even for the  case without mean field interaction.

\item 
We  estimate the  error  introduced by 
 approximating linear-convex 
  MFC 
 problems
with piecewise constant controls
and 
 Euler-Maruyama  discretizations of state processes.
By using the H\"{o}lder regularity of the optimal control,
we prove that 
the value functions of the discrete-time control problems converge to  the original value function with 
an optimal
order $1/2$,
for which 
we merely require the cost functions to be 
 H\"{o}lder continuous in time and 
 Lipschitz continuously differentiable in space
(see Theorems \ref{thm:PCPT_c_value} and \ref{thm:PCPT_discrete_value}).
We further show that 
the  
optimal controls of these discrete-time control problems
converge 
to the optimal control of  \eqref{eq:mfcE}
in the $\cH^2(\sR^k)$-norm
with an order $1/4$
 (see Theorems \ref{thm:control_PCPT_c} and \ref{thm:control_PCPT_discrete}),
which is the first  result on the convergence order of 
approximate   controls,
even for the  case without mean field interaction.

\end{itemize}

\paragraph{Our approaches.} 

{
Due to the non-Markovian nature of the controlled dynamics, 
instead of adapting the dynamic programming approach  in \cite{krylov1999,jakobsen2019},
we approach the control problem 
by directly characterizing the optimal control of \eqref{eq:mfcE}
via the stochastic  Pontryagin maximum principle.
We shall  investigate 
the solution regularity of the Pontryagin system
via Malliavin Calculus, 
and subsequently deduce the time regularity of 
the open-loop optimal control. 
This 
enables us to
  quantify the time discretization error 
for  MFC problems 
with  Lipschitz differentiable cost functions,
and avoids the strong regularity requirements on 
the optimal feedback control as in  \cite{carmona2019}.

Let us briefly comment on 
the main difficulty encountered in  analyzing the well-posedness and 
 regularity
 of optimal controls to \eqref{eq:mfcE}.
As shown in  \cite[Remark 3.4]{acciaio2019},  
an optimal (open-loop) control $\hat{\a}$ of linear-convex  MFC problems
\eqref{eq:mfcE}
can be 
characterized by its first-order 
optimality condition as follows:
\begin{equation}\label{eq:nonMarkovian_optimal}
\hat{\a}_t=\argmin\lbrace \mathbb{E}[H(t,X_t,\beta,\mathbb{P}_{(X_t,\beta)},Y_t)] \mid \ \beta \in L^2(\mathcal{F}_t;\bA) \rbrace,
\q \textnormal{$ \d \sP\otimes \d t$-a.e.},
\end{equation}
where $H$ is an associated Hamiltonian and $Y$ is the associated adjoint process satisfying 
a \textit{non-Markovian} forward-backward  Pontryagin system. 
However,
in contrast with the classical MFC problems
{without control interactions},
in general one can not express $\hat{\alpha}$ in \eqref{eq:nonMarkovian_optimal}
as 
 $\hat{\a}_t=\psi(t,X_t,Y_t,\sP_{(X_t,Y_t)})$,
with $\psi$ being the pointwise minimizer 
 of the Hamiltonian $H$,
  due to the nonlinear
dependence on the law of the control
(see \cite[Remark 4.2]{acciaio2019} for a counterexample 
in the linear-quadratic setting).
This prevents us from simplifying the non-Markovian Pontryagin system by
directly  inserting the formula for the minimizer of the Hamiltonian
into the forward equation as in \cite{carmona2015,carmona2018a}.
}

We shall 
overcome this difficulty
by showing under various structural conditions on the running costs that, 
the  optimality condition \eqref{eq:nonMarkovian_optimal} can still be  achieved by a Lipschitz function
 $\psi$ from the state and adjoint processes to the action set.
The desired deterministic function $\psi$ 
is constructed either from a \textit{modified Hamiltonian}
(see Examples \ref{example:mfc} and \ref{example:law_c})
or by solving the first-order condition explicitly
(see Example   \ref{example:lq}).
This enables us to 
reduce the non-Markovian Pontryagin system
 to a MV-FBSDE
whose forward equation depends on
  the adjoint processes and their marginal distributions.
We then prove the well-posedness  of the  MV-FBSDE
by adapting the continuation method in \cite{bensoussan2015,carmona2015},
and further establish the H\"{o}lder regularity of the solutions 
by extending the path regularity results for decoupled FBSDEs in \cite{zhang2017}
to the present setting.
This subsequently leads to 
 the desired H\"{o}lder continuity of optimal controls of   MFC problems
(see Theorem   \ref{TH:ControlRegularity}).
{
Our proof for   this  time regularity of the optimal control 
exploits the structural properties of the control problem
(e.g.,  the convexity of the loss functional 
and the uncontrolled diffusion coefficients),
which allows us to establish sharper time discretization errors  
  than existing results for general control problems
 in  \cite{krylov1999,jakobsen2019}.}
Note that our argument does not explicitly use any regularity of the value function,
and the optimal feedback control is merely Lipschitz continuous in the state variable. 

This work is organized as follows.
%
Section \ref{sec:mfcE_mvfbsde}
states the main assumptions 
of the   MFC  problem
and derives the corresponding MV-FBSDE from the stochastic maximum principle.
In Section \ref{sec:regularity_mfcE},
we 
analyze the MV-FBSDE 
and then establish the H\"{o}lder regularity of the optimal control of the   MFC  problem.
We prove the  order $1/2$ convergence of 
 the discrete-time approximation of the value function
 in Section  \ref{sec:conv_PCPT}
 and then the   order $1/4$ convergence of optimizers
 for the discrete-time control problems
  in  Section \ref{sec:PCPT_control}.
The Appendix \ref{appendix:hat_a_existence} 
is devoted to the proofs of some technical results.

\paragraph{Notation.}
We end this section by introducing  some  notations  used throughout  this paper.
%
%
%
For any given $n\in \sN$ and $x\in \sR^n$, we  denote by 
  $\sI_n$  the $n\t n$ identity matrix,
  by ${0}_{n}$ the  zero element
  of  $\sR^{n}$
   and
by
 $\bm{\delta}_{x}$
 the Dirac measure supported at $x$.
We shall denote by $\la \cdot,\cdot\ra$
the usual inner product in a given Euclidean space
and by   $|\cdot|$ the norm induced by $\la \cdot,\cdot\ra$,
which in particular satisfies  for all 
$n,m,d\in \sN$
and
$\theta_1=(x_1,y_1,z_1),\theta_2=(x_2,y_2,z_2)\in \sR^n\t \sR^m\t \sR^{m\t d}$
that
$\la z_1,z_2\ra =\textrm{trace}(z^*_1z_2)$
and 
$\la \theta_1,\theta_2\ra =\la x_1,x_2\ra+\la y_1,y_2\ra+\la z_1,z_2\ra$.

We then introduce several spaces:
for each $p \ge 1$, $k \in \sN$, $t\in [0,T]$
and Euclidean space $(E,|\cdot|)$,
$L^p(\Om; E)$ is the space  of 
 $E$-valued
$\cF$-measurable
random variables $X$ satisfying
$\|X\|_{L^p}=\sE[|X|^p]^{1/p}<\infty$,
and 
$L^p(\cF_t; E)$ is the subspace  of $L^p( \Om;E)$
containing all 
$\cF_t$-measurable
random variables;
$\cS^p(t,T;E)$ is the space of 
$\sF$-progressively  measurable 
processes
$Y: \Om\t [t,T]\to E$ 
satisfying $\|Y\|_{\cS^p}=\sE[\esssup_{s\in [t,T]}|Y_s|^p]^{1/p}<\infty$;
 $\cH^p(t,T; E)$ is the space of 
  $\sF$-progressively measurable
 processes 
$Z: \Om\t [t,T]\to E$  
 satisfying $\|Z\|_{\cH^p}=\sE[(\int_t^T|Z_s|^2\,\d s)^{p/2}]^{1/p}<\infty$.
 For notational simplicity, 
 when $t=0$,
 we often
denote 
  $\cS^p=\cS^p(0,T;E)$
  and $\cH^p=\cH^p(0,T;E)$,
if no confusion occurs.

Moreover, for every  
Euclidean space $(E, |\cdot|)$,
we denote by $\cP_2(E)$  the metric space of  probability measures 
$\mu$
on $E$ satisfying $\|\mu\|_2=(\int_E |x|^2\,\d \mu(x))^{1/2}<\infty$,
endowed with the  $2$-Wasserstein metric defined as follows: 
$$
\cW_2(\mu_1,\mu_2)
\coloneqq \inf_{\kappa\in \Pi(\mu_1,\mu_2)} \left(\int_{E\t E}|x-y|^2\d \kappa( x, y)\right)^{1/2},
\q \mu_1,\mu_2\in \cP_2(E),
$$
where $\Pi(\mu_1,\mu_2)$ is the set of all couplings of $\mu_1$ and $\mu_2$, i.e.,
$\kappa\in \Pi(\mu_1,\mu_2)$ is a probability measure on $E\t E$ such that $\kappa(\cdot\t E)=\mu_1$ 
and $\kappa(E\t \cdot)=\mu_2$.
%
For a given function $h:\cP_2(\sR^n\t \sR^k)\to \sR$
and a measure  $\eta\in \cP_2(\sR^n\t \sR^k)$
with marginals $\mu\in \cP_2(\sR^n)$, $\nu\in \cP_2(\sR^k)$,
 we  denote by $\p_\eta h(\eta)(\cdot):\sR^n\t \sR^k\to \sR^n\t \sR^k$
the  L-derivative of $h$ at $\eta$ 
and by 
$(\p_\mu h(\eta),\p_\nu h(\eta))(\cdot):\sR^n\t \sR^k\to \sR^n\t \sR^k$
the partial L-derivatives of 
$h$ with respect to  the marginals;
see e.g. \cite[Section 2.1]{acciaio2019} or \cite[Chapter 5]{carmona2018a}
for detailed definitions.

Finally,
we shall  denote by $C\in [0,\infty)$ a generic constant
throughout this paper, which is independent of the initial condition $\xi_0$,
though it may depend on the constants appearing in the assumptions  
  and may take a different value at each occurrence. 
\section{MV-FBSDEs for mean field control problems}\l{sec:mfcE_mvfbsde}

In this section, we state the main assumptions on the coefficients
of  the   MFC problems \eqref{eq:mfcE},
and then derive a coupled MV-FBSDE based on  the  stochastic  maximum principle,
which plays an essential role for our subsequent 
convergence analysis of 
piecewise constant  policy approximations.

\begin{Assumption}\l{assum:mfcE}
Let $\bA\subset \sR^k$ be a nonempty closed convex set
and
 let $b:[0,T]\t \sR^n\t \sR^k\t  \cP_2(\sR^n\t \sR^k)\to \sR^n$,
$\sigma:[0,T]\t \sR^n\t  \cP_2(\sR^n)\to\sR^{n\t d}$,
$f:[0,T]\t \sR^n\t \sR^k\t  \cP_2(\sR^n\t \sR^k)\to\sR$ 
and $g:\sR^n\t  \cP_2(\sR^n)\to\sR$ 
be measurable functions satisfying the following properties:
\begin{enumerate}[(1)]
\item \l{item:mfcE_lin}
$b$ and  $\sigma$ are affine in $(x,a,\eta)$,
i.e.,
there exist
functions
$b_0\in L^2(0,T;\sR^n)$ 
and
 $( b_1, b_2, b_3,\sigma_0,\sigma_1,\sigma_2)
\in L^\infty(
0,T;  \sR^{n\t n}\t  \sR^{n\t k} \t\sR^{n\t (n+k)} \t \sR^{n\t d}\t \sR^{(n\t d)\t n}\t \sR^{(n\t d)\t n})$ 
such that for all $(t,x,a,\mu,\eta)\in [0,T]\t \sR^n\t \sR^k\t  \cP_2(\sR^n) \t \cP_2(\sR^n \times \sR^k)$,
\begin{align*}
b(t,x,a,\eta)&=b_0(t)+b_1(t)x + b_2(t)a+b_3(t)\bar{\eta},\\
\sigma(t,x,\mu)&=\sigma_0(t)+\sigma_1(t)x+\sigma_2(t)\bar{\mu},
\end{align*}
where  $\bar{\eta}=\int (x,a)\,\d \eta (x,a)$
and $\bar{\mu}=\int x\,\d \mu(x)$ 
denote
the first moments of the measures $\eta$ and $\mu$, respectively. 

\item \l{item:mfcE_growth}
$ f(\cdot,0,0,\bm{\delta}_{0_{n+k}})\in L^\infty(0,T)$,
$f$ and  $g$ are  differentiable with respect to $(x,a,\eta)$ and $(x,\mu)$, respectively,
and all derivatives are of linear growth, i.e., 
there exists a constant $\hat{L}\in [0,\infty)$ such that 
for all $R\ge 0$ and all $(t,x,a,\mu,\eta)$ with 
$|x|, |a|,\|\mu\|_2, \| \eta \|_2\le R$,
we have that
$|\p_xf(t,x,a,\eta)|+|\p_a f(t,x,a,\eta)|+|\p_x g(x,\mu)|\le \hat{L}(1+R)$,
the $L^2(\sR^{n}\t\sR^k,\eta)$-norms
of the maps $(x',a') \mapsto \p_\mu f(t,x,a,\eta)(x',a')$, $(x',a') \mapsto \p_\nu f(t,x,a,\eta)(x',a')$  
are bounded by $\hat{L}(1+R)$,
and the $L^2(\sR^{n},\mu)$-norm of the map
  $x'\mapsto \p_\mu g(x,\mu)(x')$
is bounded by $\hat{L}(1+R)$.
\color{black}

\item\l{item:mfcE_lipschitz} 
There exists a constant $\tilde{L}\in [0,\infty)$ such that 
for all $t\in [0,T]$,
the functions
$\p_{x}f(t,\cdot)
:\sR^n\t \bA\t \cP_2(\sR^n \times \sR^{k}) \to \sR^n$,
$\p_{a}f(t,\cdot)
:\sR^n\t \bA\t \cP_2(\sR^n \times \sR^{k}) \to \sR^k$
and 
$\p_xg(\cdot):\sR^n\t \cP_2(\sR^n)\to \sR^n$
are 
$\tilde{L}$-Lipschitz continuous.
Moreover, for any $(t,x,a,\eta,\mu)\in [0,T]\t \sR^n\t \sR^k\t \cP_2(\sR^n \times \sR^{k})\t \cP_2(\sR^n)$,
there exist versions of 
$\p_\mu f(t,x,a,\eta)(\cdot)$,  $\p_\nu f(t,x,a,\eta)(\cdot)$ and  $\p_\mu g(x,\mu)(\cdot)$
such that 
\begin{align*}
& (x,a,\eta,\mu,x',a')\in \sR^n\t \bA \t \cP_2(\sR^n \times \sR^{k})\t \cP_2(\sR^n)\t \sR^n \t \bA   \\
& \quad \mapsto (\p_\mu f(t,x,a,\eta)(x',a'), \p_\nu f(t,x,a,\eta)(x',a'),\p_\mu g(x,\mu)(x'))\in \sR^n \t \sR^k \t \sR^n,
\end{align*}
is $\tilde{L}$-Lipschitz continuous.

\item\l{item:mfcE_convex}
 $f$ is  convex 
with respect to $(x,a,\eta)$, i.e.,
 there exist constants $\lambda_1,\lambda_2\ge 0$
satisfying  $\lambda_1+\lambda_2>0$ and 
for all $t\in [0,T]$, $(x,a,\eta),(x',a',\eta')\in \sR^n\t \bA \t \cP_2(\sR^n \t \sR^{k})$,
\begin{align*}
&f(t,x',a',\eta')-f(t,x,a,\eta)-
\la \p_{x}f(t,x,a,\eta), x'-x\ra 
-\la \p_{a}f(t,x,a,\eta), a'-a\ra 
\\
&\q 
-\tilde{\sE}[\la\p_\mu f(t,x,a,\eta)(\tilde{X},\tilde{\alpha}),\tilde{X}'-\tilde{X}\ra 
+ \la\p_\nu f(t,x,a,\eta)(\tilde{X},\tilde{\alpha}),\tilde{\alpha}'-\tilde{\alpha}\ra ]
\\
&\q
\ge \lambda_1|a'-a|^2+\lambda_2\tilde{\sE}[|\tilde{\alpha}'-\tilde{\alpha}|^2],
\end{align*}
whenever  $(\tilde{X},\tilde{\a}),(\tilde{X}',\tilde{\a}')\in L^2(\tilde{\Om},\tilde{\cF},\tilde{\sP};\sR^n\t\sR^k)$
with distributions $\eta$ and $\eta'$, respectively.
The function 
$g$ is convex in $(x,\mu)$, i.e.,
we have for all $(x,\mu),(x',\mu')\in \sR^n \t \cP_2(\sR^n)$ that
\begin{align*}
g(x',\mu')-g(x,\mu)-\la \p_{x}g(x,\mu), x'-x \ra 
-\tilde{\sE}[\la\p_\mu g(x,\mu)(\tilde{X}),\tilde{X}'-\tilde{X}\ra ]\ge 0,
\end{align*}
whenever  $\tilde{X},\tilde{X}'\in L^2(\tilde{\Om},\tilde{\cF},\tilde{\sP};\sR^n)$
with distributions $\mu$ and $\mu'$, respectively.
Above and hereafter, we denote by $\tilde{\sE}$  the expectation on  $(\tilde{\Om},\tilde{\cF},\tilde{\sP})$.
\end{enumerate}
\end{Assumption}

\begin{Remark}\l{rmk:mfcE_regularity}
(H.\ref{assum:mfcE})  naturally extends
  Assumption ``Control of MKV Dynamics" in 
\cite{carmona2018a} 
to the present setting with mean field interactions through controls.
In particular, (H.\ref{assum:mfcE})
allows the coefficients $(b,\sigma,f)$ to be merely measurable in time,
and the cost function $f$ to be strongly convex
either in the state or in the law of the controls,
which is important for the applications to control problems whose 
cost function does not explicitly depend on the state of the controls 
(see e.g.~Proposition \ref{prop:control_measure}).
The  assumption that the volatility coefficient is  uncontrolled
enables us to study the regularity of optimal controls 
and subsequently to quantify  
the time discretization error of \eqref{eq:mfcE}
via a probabilistic approach
(see Theorems \ref{TH:ControlRegularity} and \ref{thm:PCPT_discrete_value}).

Note that
the continuous differentiability of $f$ and the linear growth of its derivatives
(see (H.\ref{assum:mfcE}(\ref{item:mfcE_growth})(\ref{item:mfcE_lipschitz})))
show that 
there exists a constant $C$ satisfying 
for all $(x,a,\eta),(x',a',\eta')\in \sR^n\t \bA \t \cP_2(\sR^n \t \sR^{k})$ that 
\begin{align*}
&|f(t,x,a,\eta)-f(t,x',a',\eta')|
\\
&\le C(1+|x|+|x'|+|a|+|a'|+\|\eta\|_2+\|\eta'\|_2)(|(x,a)-(x',a')|+\cW_2(\eta,\eta')),
\end{align*}
which together with the uniform boundedness of $|f(t,0,0,\bm{\delta}_{0_{n+k}})|$ implies that 
the function $f$ is at most of quadratic growth with respect to $(x,a,\eta)$.
Similar arguments show that the function $g$ is 
  locally Lipschitz continuous
and  at most of quadratic growth 
with respect to $(x,\mu)$.

\end{Remark}

It is clear that 
under (H.\ref{assum:mfcE}),
for any given  
initial state $\xi_0\in L^2(\cF_0;\sR^n)$
and
 admissible control $\a\in \cA$,
the controlled state process $X^\a\in \cS^2(\sR^n)$ is well-defined by \eqref{eq:mfcE_fwd}
and the cost functional $J(\a;\xi_0)$ is finite since the functions $f$ and $g$ are
at most
 of quadratic growth
(see Remark \ref{rmk:mfcE_regularity}).
We now apply the stochastic maximum principle
to  \eqref{eq:mfcE}
and 
 characterize the optimal control
by a MV-FBSDE.

Let 
$H:[0,T]\t \sR^n\t \sR^k\t  \cP_2(\sR^n\t \sR^k)\t \sR^n\t \sR^{n\t d}\to \sR$
be the Hamiltonian of \eqref{eq:mfcE} defined as follows:
\bb\l{eq:mfcE_hamiltonian}
H(t,x,a,\eta,y,z)\coloneqq\la b(t,x,a,\eta), y\ra +\la \sigma(t,x,\pi_1\sharp\eta),z\ra+f(t,x,a,\eta),
\ee
where $\pi_1\sharp \eta$ denotes the first marginal of the measure $\eta$.
The linearity of $b,\sigma$ in 
(H.\ref{assum:mfcE}(\ref{item:mfcE_lin}))
and the convexity of  $f,g$ 
in  (H.\ref{assum:mfcE}(\ref{item:mfcE_convex}))
ensure that
the stochastic  maximum principle gives a necessary and sufficient optimality condition
of an optimal control of \eqref{eq:mfcE};
see e.g.~\cite[Theorem 3.5]{acciaio2019}
for the optimality condition with  a bounded function $b_0$ in 
(H.\ref{assum:mfcE}(\ref{item:mfcE_lin})),
which can be easily extended to the present setting.
More precisely, 
 suppose that (H.\ref{assum:mfcE}) holds and let $\xi_0\in L^2(\cF_0;\sR^n)$ be a given initial state. For any given admissible control $\a\in \cA$, 
 let $X^\a$ be the  corresponding controlled state process satisfying \eqref{eq:mfcE_fwd}, and 
 let $(Y^\a,Z^\a)\in \cS^2(\sR^n)\t \cH^2(\sR^{n\t d})$ be 
 an
  adjoint process of $X^\a$ satisfying the following 
 MV-FBSDE:
 for all $t\in [0,T]$,  
 \begin{align}\l{eq:mfcE_bsde_nonMarkov}
\begin{split}
\d Y^{{\a}}_t&=-\big(\p_x H(\theta^\a_t,Y^{{\a}}_t,Z^{{\a}}_t)
+\tilde{\sE}[\p_\mu H(\tilde{\theta}^\a_t,\tilde{Y}^{{\a}}_t,\tilde{Z}^{{\a}}_t)(X^{{\a}}_t,{\a}_t)]\big)\,\d t
+Z^{{\a}}_t\,\d W_t,
\\
Y^{{\a}}_T&=\p_x g(X^{{\a}}_T,\sP_{X^{{\a}}_T})+\tilde{\sE}[\p_\mu g(\tilde{X}^{{\a}}_T,\sP_{X^{{\a}}_t})(X^{{\a}}_T)],
\end{split}
\end{align}
where $\theta^\a_t=(t,X^{{\a}}_t,{\a}_t,\sP_{(X^{{\a}}_t,{\a}_t)})$
and  the tilde notation refers to an independent copy. 
Then the  stochastic  maximum principle asserts that
if the following optimality condition holds:
\begin{align}\label{eq:opti}
\la \p_a H(\theta^\a_t,Y^\a_t,Z^\a_t) + \tilde{\mathbb{E}} [\p_\nu  H(\tilde{\theta}^\a_t,\tilde{Y}^\a_t,\tilde{Z}^\a_t)(X^\a_t,\alpha_t)], \alpha_t -a \ra\leq 0, \quad 
\textnormal{$\forall a \in \bA, \ \mathrm{d}\mathbb{P} \otimes \mathrm{d}t$\ -a.e.,}
\end{align}
then $\a\in \cA$ is an optimal control of \eqref{eq:mfcE}.
Note that under (H.\ref{assum:mfcE}), 
for any given  control $\a\in \cA$, the adjoint process
$(Y^\a,Z^\a)\in \cS^2(\sR^n)\t \cH^2(\sR^{n\t d})$ is uniquely defined
(see \cite{acciaio2019}).

One can clearly observe  that 
 the optimality condition \eqref{eq:opti}
and  the progressively measurable control process $\a$ 
 lead to a non-Markovian coupled forward-backward system
\eqref{eq:mfcE_fwd}, \eqref{eq:mfcE_bsde_nonMarkov}
and \eqref{eq:opti}
with random coefficients.
In the following, 
we shall reduce the problem into a  forward-backward system
 with deterministic coefficients
by assuming the solvability of the optimality condition 
\eqref{eq:opti}.
%
%
%
%
%

We first observe that,
by virtue of  the fact that  
the coefficient $\sigma$ is uncontrolled,
the optimality condition \eqref{eq:opti} can be equivalently 
written as: 
it holds for all $ a \in \bA$ and for $\mathrm{d}\mathbb{P} \otimes \mathrm{d}t$\ -a.e.~that 
\begin{align}\l{eq:opti_re}
\la \p_a H^{\textrm{re}}(t, X^\a_t,\a_t, \sP_{(X^\a_t,\a_t)},Y^\a_t) 
+
\tilde{\mathbb{E}} [\p_\nu  H^{\textrm{re}}(t, \tilde{X}^\a_t,\tilde{\a}_t, {\sP}_{({X}^\a_t,{\a}_t)},\tilde{Y}^\a_t)(X^\a_t,\alpha_t)], \alpha_t -a \ra\leq 0, \quad 
\end{align}
where 
$H^{\textrm{re}}:[0,T]\t \sR^n\t \sR^k\t  \cP_2(\sR^n\t \sR^k)\t \sR^n\to \sR$
is  the reduced Hamiltonian defined by:
\bb\l{eq:hamiltonian_re}
H^{\textrm{re}}(t,x, a,\eta,y)\coloneqq \la b(t,x,a,\eta), y\ra +f(t,x,a,\eta).
\ee
The following assumption then asserts that 
the
 optimality condition \eqref{eq:opti_re} can be achieved by a 
 sufficiently regular
 feedback  map
 from the state and adjoint processes
 to the action set,
 which will be verified 
for several   MFC problems
appearing in practice.

\begin{Assumption}\phantomsection\l{assum:mfcE_hat}
\begin{enumerate}[(1)]
\item \l{item:ex} 
Assume the notation of (H.\ref{assum:mfcE}). 
There exists a  measurable function $\hat{\a}:[0,T]\t \sR^n \t \sR^n \t \cP_2(\sR^{n} \t \sR^{n}) \to \bA$ 
and a constant $L_\a\in [0,\infty)$ such that 
for all $t\in [0,T]$,
$|\hat{\a}(\cdot,0,0,\bm{\delta}_{0_{n + n}})| \leq L_\a$,
the function $\hat{\a}(t,\cdot):\sR^n \t \sR^n \t \cP_2(\sR^{n} \t \sR^{n}) \to \bA$ is 
$L_\a$-Lipschitz continuous,
and the optimality condition \eqref{eq:opti_re} holds,
 i.e., for all $(x,y,\chi,a)\in \sR^n \t \sR^n \t \cP_2(\sR^{n} \t \sR^{n})\t \bA$, 
\begin{align}\l{eq:opti_markov}
\begin{split}
&\la \p_a H^{\textrm{re}}(t,x,
\hat{\a}(t,x,y,\chi),
\phi(t,\chi),y)
\\
&\q
 +\int_{\sR^n\t \sR^n} \p_\nu  H^{\textrm{re}}(t,\tilde{x},\hat{\a}(t,\tilde{x},\tilde{y},\chi) ,\phi(t,\chi), \tilde{y})\big(x,\hat{\a}(t,x,y,\chi) \big)\,\d \chi(\tilde{x},\tilde{y}),
\\
&\q \hat{\a}(t,x,y,\chi) -a \ra\leq 0, 
\end{split}
\end{align}
where
$\phi(t,\chi) \coloneqq \chi \circ \big(\sR^n\t \sR^n\ni (x,y)\mapsto(x,\hat{\a}(t,x,y,\chi))\in \sR^n\t \bA\big)^{-1}$.

\item \l{item:HC} 
The function $\hat{\a}$ is locally H\"{o}lder continuous in time, i.e., it holds for all $t,t'\in [0,T], (x,y,\chi)\in \sR^n \t \sR^n \t \cP_2(\sR^n \times \sR^{n})$ that
$|\hat{\a}(t,x,y,\chi)-\hat{\a}(t',x,y,\chi)|\le L_{\a}(1+|x|+|y|+\|\chi\|_2)|t-t'|^{1/2}$.
\end{enumerate}
\end{Assumption}

Roughly speaking, (H.\ref{assum:mfcE_hat}) ensures that there exists a deterministic function $\hat{\a}$ satisfying 
the optimality condition  \eqref{eq:opti}  pointwise,
which enables us to study controls $\hat{\a}\in \cA$ of the form $\hat{\a}_t=\hat{\a}(t,X^{\hat{\a}}_t,Y^{\hat{\a}}_t,\sP_{(X^{\hat{\a}}_t,Y^{\hat{\a}}_t)})$, $t\in [0,T]$.
In this case, it is not difficult to see that 
 $\phi(t,\sP_{(X^{\hat{\a}}_t,Y^{\hat{\a}}_t)})$ is the joint law of $(X^{\hat{\a}}_t,\hat{\a}_t)$, 
 since  it holds
 for any $t\in [0,T]$, $X,Y \in L^2(\Om;\sR^n)$
and any Borel measurable set $A \subset \sR^n \t \sR^k$ that
\begin{align}\l{eq:PushF}
\begin{split}
\phi(t,\sP_{(X,Y)})(A) &= \sP_{(X,Y)} \left( (\text{id}_{\sR^{n}},\hat{\a}(t,\cdot,\cdot,\sP_{(X,Y)}))^{-1}(A) \right) \\
& = \sP \left( (X,Y) \in (\text{id}_{\sR^{n}},\hat{\a}(t,\cdot,\cdot,\sP_{(X,Y)}))^{-1}(A) \right) \\
& = \sP \left( (X,\hat{\a}(t,X,Y,\sP_{(X,Y)})) \in  A \right) = \mathbb{P}_{(X,\hat{\a}(t,X,Y,\sP_{(X,Y)}))}(A).
\end{split}
\end{align}

Note that similar assumptions have been made in 
\cite[Theorem 3]{gobet2019} and \cite[Assumption (A6)]{lauriere2020}
to study  MFC problems.
Under (H.\ref{assum:mfcE}) and  (H.\ref{assum:mfcE_hat}),
we shall
establish the existence of a H\"{o}lder continuous optimal control for \eqref{eq:mfcE}
in Section \ref{sec:regularity_mfcE},
  and then analyze the  convergence rate of piecewise constant policy approximation
  for \eqref{eq:mfcE}
  in Section \ref{sec:conv_PCPT}.

In the following, we  verify 
 (H.\ref{assum:mfcE_hat})  for  different classes of
   MFC problems
appearing in practice,
which are not covered by results in the existing literature.
In particular, we shall give precise  conditions on the functions $(b,f)$ in \eqref{eq:mfcE}
to ensure the existence and  regularity of the function $\hat{\a}$.
Note that these conditions do not involve high-order derivatives of the cost functions,
which 
enables us to quantify the time discretization error of \eqref{eq:mfcE}
under much
  weaker assumptions than conditions (B1) and (C1)-(C3) in \cite{carmona2019}
  (see the discussions above Theorem \ref{thm:PCPT_discrete_value} for details).
In particular,
we allow
merely measurable  functions $(b_i,\sigma_i)$,
a possibly degenerate state-dependent diffusion coefficient,
and  cost functions $(f,g)$ that are not necessarily twice differentiable.

\begin{Example}\l{example:mfc}
In this example, we show (H.\ref{assum:mfcE_hat}) is satisfied by
 a class of   MFC problems 
with cost function $f$ which does not involve the law of the controls.
This includes the classical MFC problem
as a special case,
for which the  controlled dynamics is also independent of
 the law of the controls
(see \cite{chassagneux2014,carmona2015,carmona2018a,carmona2019}).

The proof of the following proposition is based on 
defining the function $\hat{\a}$ as the minimizer of a modified version 
of the reduced Hamiltonian ${H}^{\textrm{re}}$.
Note that one can adapt 
the arguments  to verify (H.\ref{assum:mfcE_hat})
for more general cost functions which are affine in the law of the controls,
i.e., $f(t,x,a,\eta)=f_1(t,x,a,\pi_1\sharp\eta)+\la f_2(t,x,\pi_1\sharp\eta), \ol{\pi_2\sharp \eta}\ra$,
but for notational simplicity,  we choose to refrain from
providing this level of generality
without the motivation from specific applications.

\begin{Proposition}\l{prop:mfc_hat_a}
Suppose (H.\ref{assum:mfcE}) holds,
and 
for each $(t,x,a)\in [0,T]\t\sR^n\t \sR^k$,
the function
 $\cP_2(\sR^n\t \sR^k)\ni \eta\mapsto 
f(t,x,a,\eta)\in \sR$
 depends only on  
 the first  marginal $\pi_1\sharp \eta$ of the measure $\eta$.
Then there exists a function
$\hat{\a}:[0,T]\t \sR^n\t \sR^n\t \cP_2(\sR^n\t \sR^n)\to \bA$ satisfying 
 (H.\ref{assum:mfcE_hat}(\ref{item:ex})).

Assume further that
there exists a constant $\tilde{K}\in [0,\infty)$ such that 
it holds for all $t,t'\in [0,T]$, $(x,a,\eta)\in \sR^n\t  \bA\t \cP_2(\sR^n\t \sR^n)$
that 
$|b_2(t)-b_2(t')|+|b_3(t)-b_3(t')|\le \tilde{K}|t-t'|^{1/2}$ and $|\p_a f(t,x,a,\eta)-\p_a f(t',x,a,\eta)|\le \tilde{K}(1+|x|+|a|+\|\eta\|_2)|t-t'|^{1/2}$.
Then there exists a function
$\hat{\a}:[0,T]\t \sR^n\t \sR^n\t \cP_2(\sR^n\t \sR^n)\to \bA$ satisfying 
 (H.\ref{assum:mfcE_hat}).

\end{Proposition}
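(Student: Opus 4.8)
The plan is to exploit the affine structure of $b$ in (H.\ref{assum:mfcE}(\ref{item:mfcE_lin})) together with the hypothesis that $f$ does not depend on the law of the control, in order to show that the (a priori implicit, McKean--Vlasov type) optimality condition \eqref{eq:opti_markov} collapses to a pointwise variational inequality that is exactly the first-order condition of a scalar strongly convex \emph{modified Hamiltonian}; then $\hat\a$ is defined as the unique minimiser of that modified Hamiltonian, and its regularity follows from the standard stability estimate for minimisers of uniformly strongly convex functions. Concretely: writing $b_3(t)=[b_3^{(1)}(t)\mid b_3^{(2)}(t)]$ with $b_3^{(2)}(t)\in\sR^{n\t k}$ the block acting on control means, one has $\p_a H^{\textrm{re}}(t,x,a,\eta,y)=b_2(t)^{*}y+\p_a f(t,x,a,\eta)$, and the partial L-derivative $\p_\nu H^{\textrm{re}}(t,\tilde x,\tilde a,\eta,\tilde y)(\cdot)=b_3^{(2)}(t)^{*}\tilde y$ is a \emph{constant} map, because $f$ is independent of the control marginal and $\la b(t,x,a,\eta),y\ra$ depends on it only through the linear term $\la b_3^{(2)}(t)\,\ol{\pi_2\sharp\eta},y\ra$. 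Since the map defining $\phi(t,\chi)$ leaves the first coordinate unchanged, $\pi_1\sharp\phi(t,\chi)=\pi_1\sharp\chi$, and $f$ sees $\phi(t,\chi)$ only through this marginal. Substituting these facts into \eqref{eq:opti_markov}, every reference to the unknown function $\hat\a(t,\cdot,\cdot,\chi)$ cancels (the $\int\d\chi$ of the $\p_\nu$-term becomes $b_3^{(2)}(t)^{*}\ol{\pi_2\sharp\chi}$), and \eqref{eq:opti_markov} becomes: $\hat a=\hat\a(t,x,y,\chi)$ minimises over $\bA$ the function
\[
\widehat H(t,x,a,\chi,y)\;\coloneqq\;f(t,x,a,\pi_1\sharp\chi)+\la b_2(t)a,y\ra+\la b_3^{(2)}(t)a,\ol{\pi_2\sharp\chi}\ra .
\]
I expect this step --- verifying that the implicit McKean--Vlasov dependence genuinely disappears --- to be the crux, and it rests on getting the two partial L-derivatives of $H^{\textrm{re}}$ exactly right.

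For well-posedness of the minimiser I would first note that, since $f$ is independent of the control marginal, (H.\ref{assum:mfcE}(\ref{item:mfcE_convex})) --- applied with $x'=x$, $a'=a$ and two Dirac masses $\eta,\eta'$ sharing the same state component but with distinct control components --- forces $\lambda_2=0$, hence $\lambda_1>0$, so $f(t,x,\cdot,\mu)$ is $\lambda_1$-strongly convex uniformly in $(t,x,\mu)$. As $\widehat H(t,x,\cdot,\chi,y)$ differs from $f(t,x,\cdot,\pi_1\sharp\chi)$ by an affine term, it is $\lambda_1$-strongly convex and continuous on the nonempty closed convex set $\bA$, hence admits a unique minimiser, which I take as the definition of $\hat\a(t,x,y,\chi)\in\bA$; by the first-order characterisation of constrained convex minima it satisfies \eqref{eq:opti_markov}. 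Joint measurability of $\hat\a$ then follows once continuity in $(x,y,\chi)$ is in hand (Carath\'eodory property), since all $t$-dependence enters through measurable coefficients.

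It remains to establish the regularity estimates. For the Lipschitz bound in $(x,y,\chi)$, writing $\theta=(x,y,\chi)$ and $\theta'=(x',y',\chi')$, I would subtract the variational inequalities at the two parameter triples, use the $\lambda_1$-strong convexity of $a\mapsto\widehat H(t,x,a,\chi,y)$ and Cauchy--Schwarz to obtain $\lambda_1|\hat\a(t,\theta)-\hat\a(t,\theta')|\le|\nabla_a\widehat H(t,x,\hat\a(t,\theta'),\chi,y)-\nabla_a\widehat H(t,x',\hat\a(t,\theta'),\chi',y')|$, and bound the right-hand side by $\tilde L|x-x'|+\|b_2\|_{L^\infty}|y-y'|+(\tilde L+\|b_3\|_{L^\infty})\cW_2(\chi,\chi')$, using the $\tilde L$-Lipschitz continuity of $\p_a f(t,\cdot)$ from (H.\ref{assum:mfcE}(\ref{item:mfcE_lipschitz})), the $L^\infty$ bounds on $b_2,b_3$ from (H.\ref{assum:mfcE}(\ref{item:mfcE_lin})), the contraction of $\cW_2$ under taking marginals, and $|\ol{\pi_2\sharp\chi}-\ol{\pi_2\sharp\chi'}|\le\cW_2(\chi,\chi')$. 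The bound $|\hat\a(\cdot,0,0,\bm{\delta}_{0_{n+n}})|\le L_\a$ follows by testing the variational inequality at a fixed $a_0\in\bA$ and combining strong convexity with the linear growth of $\p_a f$ from (H.\ref{assum:mfcE}(\ref{item:mfcE_growth})), which also yields $|\hat\a(t,x,y,\chi)|\le L_\a(1+|x|+|y|+\|\chi\|_2)$; this gives (H.\ref{assum:mfcE_hat}(\ref{item:ex})). For the second assertion, under the additional $1/2$-H\"older-in-time bounds on $b_2,b_3,\p_a f$, the same subtraction in the $t$ variable (with $(x,y,\chi)$ fixed) bounds $\lambda_1$ times the increment by $|\p_a f(t,x,\hat{a}',\pi_1\sharp\chi)-\p_a f(t',x,\hat{a}',\pi_1\sharp\chi)|+|b_2(t)-b_2(t')|\,|y|+|b_3^{(2)}(t)-b_3^{(2)}(t')|\,|\ol{\pi_2\sharp\chi}|$ with $\hat{a}'=\hat\a(t',x,y,\chi)$, and each term is $\le C(1+|x|+|y|+\|\chi\|_2)|t-t'|^{1/2}$ by the assumed H\"older bounds, the linear-growth estimate on $\hat{a}'$, and $|\ol{\pi_2\sharp\chi}|\le\|\chi\|_2$; this is (H.\ref{assum:mfcE_hat}(\ref{item:HC})), after enlarging $L_\a$ if needed. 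Apart from the Step-1 cancellation and the observation $\lambda_1>0$, these are routine stability estimates for minimisers of uniformly strongly convex functions.
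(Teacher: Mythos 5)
Your proposal is correct and follows essentially the same route as the paper: the paper also defines $\hat{\a}$ as the unique minimiser of a modified Hamiltonian $G$ whose $a$-dependent part is exactly your $\widehat H$ (i.e.\ $f(t,x,a,\pi_1\sharp\chi)+\la b_2(t)a,y\ra+\la b_3^{(2)}(t)a,\ol{\pi_2\sharp\chi}\ra$), uses $\lambda_1$-strong convexity (noting $\lambda_1>0$, as you justify via $\lambda_2=0$) for well-posedness and the Lipschitz/boundedness estimates, verifies \eqref{eq:opti_markov} through the same cancellation $\pi_1\sharp\phi(t,\chi)=\pi_1\sharp\chi$ together with $\int\p_\nu H^{\textrm{re}}\,\d\chi=\beta(t)\ol{\pi_2\sharp\chi}$, and obtains the time regularity by the same subtraction-of-variational-inequalities argument.
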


\begin{proof}
Observe that under the assumptions of Proposition \ref{prop:mfc_hat_a},
the reduced Hamiltonian \eqref{eq:hamiltonian_re} can be written as follows:
for all 
$ (t,x, a,\eta,y)\in [0,T]\t\sR^n\t \sR^k\t \cP_2(\sR^n\t \sR^k)\t \sR^n$,
\begin{align*}
{H}^{\textrm{re}}(t,x, a,\eta,y)
&= \la {b}(t,x,a,\eta), y\ra +\tilde{f}(t,x,a,\pi_1\sharp\eta)
\\
&= \psi_1(t,x,a,y)+\psi_2(t,\eta,y) +\tilde{f}(t,x,a,\pi_1\sharp\eta),
\end{align*}
where we have
$\psi_1(t,a,x,y)\coloneqq \la b_0(t)+b_1(t)x + b_2(t)a, y\ra$,
$ \psi_2(t,\eta,y)\coloneqq \la b_3(t)\bar{\eta},y\ra $
and 
 $\tilde{f}(t,x,a,\mu)\coloneqq {f}(t,x,a,\mu\t \bm{\delta}_{0_k})$.
Moreover, we have that
$\p_a {H}^{\textrm{re}}(t,x, a,\eta,y)=b_2^*(t)y+\p_a\tilde{f}(t,x,a,\pi_1\sharp\eta)$
and $\p_\nu {H}^{\textrm{re}}(t,x, a,\eta,y)(\cdot)=\p_\nu\psi_2(t,\eta,y) (\cdot)=\b(t)y$, where 
 $\b(t)\in \sR^{k\t n}$ is the  submatrix  formed by deleting the first $n$ rows of $b^*_3(t)\in \sR^{(n+k)\t n}$.
 
 Let us define the function $G:[0,T]\t \sR^n\t \sR^k\t  \cP_2(\sR^n)\t \cP_2(\sR^n)\t \sR^n\to \sR $
satisfying for all 
$(t,x,a,\mu,\rho,y)\in [0,T]\t \sR^n\t \sR^k\t  \cP_2(\sR^n)\t\cP_2(\sR^k)\t \sR^n $ that 
\begin{align*}
G(t,x,a,\mu,\rho,y)\coloneqq 
 \psi_1(t,x,a,y)+\psi_2(t,\mu\times \bm{\delta}_a,\bar{\rho}) +\tilde{f}(t,x,a,\mu)
\end{align*}
with $\bar{\rho}=\int_{\sR^n}y\,\d \rho(y)$.
 We further  define the map 
$ \hat{\a}:[0,T]\t \sR^n\t  \sR^n\t \cP_2(\sR^n\t \sR^n)\to \bA$
satisfying for all $(t,x,y,\chi)\in [0,T]\t \sR^n\t  \sR^n\t \cP_2(\sR^n\t \sR^n)$ that
\bb\l{eq:mfc_a_hat}
\hat{\a}(t,x,y,\chi)= \argmin_{\a\in \bA}G(t,x,a,\pi_1\sharp \chi,\pi_2\sharp \chi,y).
\ee
Since the function $f$   depends only on  
 the first  marginal $\pi_1\sharp \eta$,
 we see from (H.\ref{assum:mfcE}(\ref{item:mfcE_convex}))
 that
 $\lambda_1>0$ and 
  the map
  $\bA\ni a\mapsto  \tilde{f}(t,x,a,\mu)\in \sR$ is $\lambda_1$-strongly convex,
  which along with the linearity of the map
$\bA\ni a\mapsto  \psi_1(t,x,a,y)+\psi_2(t,\mu\times \bm{\delta}_a,\bar{\rho})\in \sR$
shows $\bA\ni a\mapsto G(t,x,a,\pi_1\sharp \chi,\pi_2\sharp \chi,y)\in \sR$ is $\lambda_1$-strongly convex.
Then by following the same argument as in  \cite[Lemma 3.3]{carmona2018a},
we can show
the above function $\hat{\a}$ is 
well-defined, measurable,
 locally bounded and Lipschitz continuous with respect to $(x,y,\chi)$ uniformly in $t$.

Then it remains to verify  \eqref{eq:opti_markov}
in order to show that  $\hat{\a}$ satisfies  (H.\ref{assum:mfcE_hat}(\ref{item:ex})).
The fact that  $\hat{\a}$ is a minimizer of $G$ over $\bA$
and the definition of $G$ 
imply
for all $(t,x,y,\chi)\in [0,T]\t \sR^n\t  \sR^n\t \cP_2(\sR^n\t \sR^n)$,
$a\in \bA$ that 
\begin{align}\l{eq:hat_a_optimal_G}
\begin{split}
0&
\ge \la \p_a G(t,x,\hat{\a}(t,x,y,\chi),\pi_1\sharp \chi,\pi_2\sharp \chi,y),
\hat{\a}(t,x,y,\chi)-a\ra 
\\
&
= \la b_2^*(t)y +\b(t)\ol{\pi_2\sharp \chi}+\p_a\tilde{f}(t,x,\hat{\a}(t,x,y,\chi),\pi_1\sharp\chi),
\hat{\a}(t,x,y,\chi)-a\ra,
\end{split}
\end{align}
where $\ol{\pi_2\sharp \chi}=\int_{\sR^n} y\,\d {\pi_2\sharp \chi}(y)$.
The fact that  $f(t,x,a,\eta)$ depends only on  the first marginal of the measure $\eta$
gives us that
$\p_a\tilde{f}(t,x,a,\pi_1\sharp\chi)=\p_a\tilde{f}(t,x,a,\pi_1\sharp\phi(t,\chi))$, with the function $\phi$ defined as in \eqref{eq:opti_markov}.
Hence, we can obtain from the expression of $\p_a{H}^{\textrm{re}}$ that 
\begin{align*}
0&
\ge \la b_2^*(t)y +\b(t)\ol{\pi_2\sharp \chi}+\p_a\tilde{f}(t,x,\hat{\a}(t,x,y,\chi),\pi_1\sharp\phi(t,\chi)),
\hat{\a}(t,x,y,\chi)-a\ra
\\
&= \la \p_a {H}^{\textrm{re}}(t,x, \hat{\a}(t,x,y,\chi),\phi(t,\chi),y)
+\b(t)\ol{\pi_2\sharp \chi},
\hat{\a}(t,x,y,\chi)-a\ra,
\end{align*}
which is the optimality condition \eqref{eq:opti_markov} 
since for all $ (t,a,\eta)$,
$$
\int_{\sR^n\t \sR^n}\p_\nu {H}^{\textrm{re}}(t,\tilde{x}, a,\eta,\tilde{y})(\cdot)\,\d \chi(\tilde{x},\tilde{y})=\int_{\sR^n\t \sR^n} \b(t)\tilde{y}\,\d \chi(\tilde{x},\tilde{y})=\b(t)\ol{\pi_2\sharp \chi}.
$$

 We now prove the time regularity of $\hat{\a}$ under the additional assumption 
 on the H\"{o}lder regularity of the functions $b_2, b_3$ and $\p_a f$.
Let $t,t'\in [0,T], (x,y,\chi)\in \sR^n\t \sR^n\t \cP_2(\sR^n\t \sR^n)$,
 $\hat{a}=\hat{\a}(t,x,y,\chi)$ and $\hat{a}'=\hat{\a}(t',x,y,\chi)$.
The optimal condition \eqref{eq:hat_a_optimal_G} gives us that  
$\la  \p_a G(t,x,\hat{a},\pi_1\sharp \chi,\pi_2\sharp \chi,y)
,\hat{a}'-\hat{a}\ra\ge 0
 \ge 
\la  \p_a G(t',x,\hat{a}',\pi_1\sharp \chi,\pi_2\sharp \chi,y),\hat{a}'-\hat{a}\ra
$.
Moreover, the
$\lambda_1$-strong convexity of 
 $\bA\ni a\mapsto G(t,x,a,\pi_1\sharp \chi,\pi_2\sharp \chi,y)\in \sR$ 
 shows that
\begin{align*}
&G(t,x,\hat{a}',\pi_1\sharp \chi,\pi_2\sharp \chi,y)-G(t,x,\hat{a},\pi_1\sharp \chi,\pi_2\sharp \chi,y)
\\
&\q
-\la \p_{a}G(t,x,\hat{a},\pi_1\sharp \chi,\pi_2\sharp \chi,y), \hat{a}'-\hat{a}\ra \ge \lambda_1|\hat{a}'-\hat{a}|^2,
\end{align*}
from which, by exchanging the role of $ \hat{a}'$ and $ \hat{a}$ 
in the above inequality and summing the resulting estimates, we can deduce that  
\begin{align*}
2\lambda_1|\hat{a}'-\hat{a}|^2
&\le
\la\hat{a}'-\hat{a}, \p_{a}G(t,x,\hat{a}',\pi_1\sharp \chi,\pi_2\sharp \chi,y)-\p_{a}G(t,x,\hat{a},\pi_1\sharp \chi,\pi_2\sharp \chi,y)\ra
\\
&\le
\la\hat{a}'-\hat{a}, \p_{a}G(t,x,\hat{a}',\pi_1\sharp \chi,\pi_2\sharp \chi,y)-\p_{a}G(t',x,\hat{a}',\pi_1\sharp \chi,\pi_2\sharp \chi,y)\ra.
\end{align*}
Hence, we can obtain from the expression of $\p_a G$ that
\begin{align*}
|\hat{a}'-\hat{a}|
&\le
C |\p_{a}G(t,x,\hat{a}',\pi_1\sharp \chi,\pi_2\sharp \chi,y)-\p_{a}G(t,x,\hat{a}',\pi_1\sharp \chi,\pi_2\sharp \chi,y)|
\\
&\le C
\Big(|
\p_a\tilde{f}(t,x,\hat{\a}',\pi_1\sharp\chi)-\p_a\tilde{f}(t',x,\hat{\a}',\pi_1\sharp\chi)|
+|b_2^*(t)-b_2^*(t')||y|
\\
&\q +|\b(t)-\b(t')|\|\chi\|_2\Big),
\end{align*}
for a constant $C$ independent of $(t,t',x,y,\chi)$.
Then, by applying the H\"{o}lder regularity assumption of the coefficients, we can obtain that
\begin{align*}
|\hat{a}'-\hat{a}|
&\le
C(1+|x|+\|\chi\|_2+|\hat{a}'|+|y|)|t-t'|^{1/2},
\end{align*}
which, together with the fact that the function $\hat{a}$ is locally bounded and  of linear growth in $(x,y,\chi)$, 
leads to the desired H\"{o}lder continuity of $\hat{\a}$.
\end{proof}

\end{Example}

\begin{Example}\l{example:law_c}
In this example, we verify 
 (H.\ref{assum:mfcE_hat})  for 
  MFC problems where the dependence of the cost function $f$ 
on $(X,\a)$ takes a separable form, and 
the forward dynamics \eqref{eq:mfcE_fwd}
depends on the  control process 
only through its expectation.

Note that  \cite{burzoni2020} studies a class of MFC problems
where the coefficients of the controlled dynamics 
depend on the  state process 
only through its expectation,
and  admissible controls are chosen to be  deterministic functions. 
The following proposition can be viewed as a generalization of such problems 
since it shows that for certain  MFC problems,
the unique optimal control in $\cA$ is in fact deterministic,
even though the coefficients of the forward dynamics can
 depend on the  state of the controlled process explicitly.

The proof of the following result is based on 
defining the function $\hat{\a}$ as the minimizer of the expectation of the reduced Hamiltonian ${H}^{\textrm{re}}$.

\begin{Proposition}\l{prop:control_measure}
Suppose  (H.\ref{assum:mfcE}) holds,
the function $b_2$ 
in (H.\ref{assum:mfcE}(\ref{item:mfcE_lin}))
satisfies $b_2(t)=0$ for all $t \in [0,T]$,
and
the function $f$ 
 is of the form 
$f(t,x,a,\eta)=f_1(t,x,\pi_1\sharp \eta,\pi_2\sharp \eta)
+f_2(t,a,\pi_1\sharp \eta,\pi_2\sharp \eta)$,
where 
 $f_1:[0,T]\t \sR^n\t \cP_2(\sR^n)\t \cP_2(\sR^k)\to \sR$
and $f_2:[0,T]\t \sR^k\t \cP_2(\sR^n)\t \cP_2(\sR^k)\to \sR$
are functions
satisfying (H.\ref{assum:mfcE}(\ref{item:mfcE_growth})(\ref{item:mfcE_lipschitz})),
and $\pi_1\sharp\eta$ (resp.~$\pi_2\sharp \eta$) is 
 the first (resp.~second) marginal of the measure $\eta$.
Then there exists a function $\hat{\a}:[0,T]\t \cP_2(\sR^n\t \sR^n)\to\bA$ satisfying 
 (H.\ref{assum:mfcE_hat}(\ref{item:ex})). 

Assume further that
there exists a constant $\tilde{K}\in [0,\infty)$ such that 
it holds for all $t,t'\in [0,T]$, $(x,a,\mu)\in \sR^n\t  \bA\t \cP_2(\sR^n)$
that 
$|b_3(t)-b_3(t')|\le \tilde{K}|t-t'|^{1/2}$ 
and 
\begin{align*}
&
|\p_\nu f_1(t,x,\mu,\bm{\delta}_{{a}})({a})
-\p_\nu f_1(t',x,\mu,\bm{\delta}_{{a}})({a})|
+
|\p_a f_2(t,a,\mu,\bm{\delta}_{{a}})
-\p_a f_2(t',a,\mu,\bm{\delta}_{{a}})|
\\
&\q+|\p_\nu f_2(t,a,\mu,\bm{\delta}_{{a}})({a})
-
\p_\nu f_2(t',a,\mu,\bm{\delta}_{{a}})({a})|
\le \tilde{K}(1+ |x| + |a| + \|\mu\|_2)|t-t'|^{1/2}.
\end{align*}
Then there exists a function $\hat{\a}:[0,T]\t \cP_2(\sR^n\t \sR^n)\to\bA$ satisfying 
 (H.\ref{assum:mfcE_hat}).
\end{Proposition}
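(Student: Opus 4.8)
The plan is to construct $\hat{\a}$ as the minimizer over $\bA$ of the expectation of the reduced Hamiltonian $H^{\textrm{re}}$ \emph{evaluated along the mean field generated by a constant control}; since this object turns out to be independent of the state--adjoint pair, the resulting candidate $\hat{\a}_t=\hat{\a}(t,\sP_{(X_t,Y_t)})$ is automatically deterministic, which is the assertion advertised above. Concretely, for $t\in[0,T]$ and $\chi\in\cP_2(\sR^n\t\sR^n)$ with first marginal $\mu\coloneqq\pi_1\sharp\chi$, I would set
$$
\Psi(t,a,\chi)\coloneqq\int_{\sR^n\t\sR^n}H^{\textrm{re}}\big(t,x,a,\mu\t\bm{\delta}_a,y\big)\,\d\chi(x,y),\q a\in\bA,
$$
and observe, using $b_2\equiv0$ in (H.\ref{assum:mfcE}(\ref{item:mfcE_lin})) and $f=f_1+f_2$ with $f_1$ free of $a$, that
$$
\Psi(t,a,\chi)=C(t,\chi)+\la b_3^{(2)}(t)a,\ol{\pi_2\sharp\chi}\ra+\int_{\sR^n}f_1(t,x,\mu,\bm{\delta}_a)\,\d\mu(x)+f_2(t,a,\mu,\bm{\delta}_a),
$$
where $b_3^{(2)}(t)\in\sR^{n\t k}$ is the submatrix formed by the last $k$ columns of $b_3(t)$ and $C(t,\chi)$ collects the $a$-independent terms. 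Applying (H.\ref{assum:mfcE}(\ref{item:mfcE_convex})) with $x'=x$, $\eta=\mu\t\bm{\delta}_a$, $\eta'=\mu\t\bm{\delta}_{a'}$ and the deterministic random variables $\tilde{X}'=\tilde{X}\sim\mu$, $\tilde{\a}=a$, $\tilde{\a}'=a'$ eliminates the $\p_\mu$-term and, after integrating over $\tilde{X}\sim\mu$, shows that $a\mapsto\int_{\sR^n}f(t,x,a,\mu\t\bm{\delta}_a)\,\d\mu(x)$ is $(\lambda_1+\lambda_2)$-strongly convex; since the remaining $a$-dependent term of $\Psi$ is affine, $\bA\ni a\mapsto\Psi(t,a,\chi)$ is $(\lambda_1+\lambda_2)$-strongly convex with $\lambda_1+\lambda_2>0$, hence admits a unique minimizer over the closed convex set $\bA$, which I define to be $\hat{\a}(t,\chi)$ and, abusing notation, also regard as the function $\hat{\a}(t,x,y,\chi)\coloneqq\hat{\a}(t,\chi)$ that is constant in $(x,y)$. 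Arguing as in \cite[Lemma 3.3]{carmona2018a} with the help of (H.\ref{assum:mfcE}(\ref{item:mfcE_growth})(\ref{item:mfcE_lipschitz})) and the strong convexity, $\hat{\a}$ is measurable, satisfies $|\hat{\a}(t,\bm{\delta}_{0_{n+n}})|\le L_\a$, is $L_\a$-Lipschitz in $(x,y,\chi)$ uniformly in $t$, and obeys $|\hat{\a}(t,\chi)|\le L_\a(1+\|\chi\|_2)$.

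Next I would verify \eqref{eq:opti_markov} for this $\hat{\a}$. Because $\hat{\a}$ is constant in $(x,y)$, the map $\phi$ in \eqref{eq:opti_markov} reduces to $\phi(t,\chi)=\mu\t\bm{\delta}_{\hat a}$ with $\hat a\coloneqq\hat{\a}(t,\chi)$, so $\Psi(t,\hat a,\chi)=\int H^{\textrm{re}}(t,x,\hat a,\phi(t,\chi),y)\,\d\chi(x,y)$. Differentiating $\Psi$ in $a$ -- the variable $a$ enters both directly and through $\bm{\delta}_a$, so the chain rule for the L-derivative contributes the terms $\p_\nu f_1(t,x,\mu,\bm{\delta}_a)(a)$ and $\p_\nu f_2(t,a,\mu,\bm{\delta}_a)(a)$ -- gives
$$
\p_a\Psi(t,a,\chi)=\big(b_3^{(2)}(t)\big)^{*}\ol{\pi_2\sharp\chi}+\int_{\sR^n}\p_\nu f_1(t,x,\mu,\bm{\delta}_a)(a)\,\d\mu(x)+\p_a f_2(t,a,\mu,\bm{\delta}_a)+\p_\nu f_2(t,a,\mu,\bm{\delta}_a)(a).
$$
On the other hand, at $\eta=\mu\t\bm{\delta}_{\hat a}$ one has $\p_a H^{\textrm{re}}(t,x,\hat a,\eta,y)=\p_a f_2(t,\hat a,\mu,\bm{\delta}_{\hat a})$ (using $b_2\equiv0$ and that $f_1$ has no direct $a$-dependence), which is independent of $(x,y)$, and, by (H.\ref{assum:mfcE}(\ref{item:mfcE_lin})), $\p_\nu H^{\textrm{re}}(t,\tilde x,\hat a,\eta,\tilde y)(\cdot)=(b_3^{(2)}(t))^{*}\tilde y+\p_\nu f_1(t,\tilde x,\mu,\bm{\delta}_{\hat a})(\cdot)+\p_\nu f_2(t,\hat a,\mu,\bm{\delta}_{\hat a})(\cdot)$; integrating the latter against $\d\chi(\tilde x,\tilde y)$ and adding $\p_a H^{\textrm{re}}$ reproduces exactly $\p_a\Psi(t,\hat a,\chi)$. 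Hence \eqref{eq:opti_markov} is precisely the first-order optimality condition $\la\p_a\Psi(t,\hat a,\chi),\hat a-a\ra\le0$ for all $a\in\bA$, which holds since $\hat a$ minimizes the convex function $\Psi(t,\cdot,\chi)$ over the closed convex set $\bA$. This establishes (H.\ref{assum:mfcE_hat}(\ref{item:ex})).

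For (H.\ref{assum:mfcE_hat}(\ref{item:HC})) under the additional $1/2$-H\"older-in-time hypotheses, I would run the standard strong-convexity argument from Example \ref{example:mfc}: writing $\hat a=\hat{\a}(t,\chi)$ and $\hat a'=\hat{\a}(t',\chi)$, the first-order conditions at $t$ and $t'$ together with $(\lambda_1+\lambda_2)$-strong convexity of $\Psi(t,\cdot,\chi)$ yield $2(\lambda_1+\lambda_2)|\hat a-\hat a'|\le|\p_a\Psi(t,\hat a',\chi)-\p_a\Psi(t',\hat a',\chi)|$. Using the explicit formula for $\p_a\Psi$, the assumed $|t-t'|^{1/2}$-continuity of $b_3$ (hence of $b_3^{(2)}$) and of the maps $t\mapsto\p_\nu f_1(t,x,\mu,\bm{\delta}_a)(a)$, $t\mapsto\p_a f_2(t,a,\mu,\bm{\delta}_a)$, $t\mapsto\p_\nu f_2(t,a,\mu,\bm{\delta}_a)(a)$, integration over $\mu$, and the elementary bound $\int_{\sR^n}(1+|x|)\,\d\mu(x)\le1+\|\mu\|_2$, one estimates the right-hand side by $C(1+|\hat a'|+\|\chi\|_2)|t-t'|^{1/2}$, and then by $C(1+\|\chi\|_2)|t-t'|^{1/2}$ via $|\hat a'|\le L_\a(1+\|\chi\|_2)$. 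Since $\hat{\a}$ does not depend on $(x,y)$, this is the required estimate (after enlarging $L_\a$).

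I expect the main obstacle to lie in the two book-keeping steps that hinge on the L-derivative calculus with the Dirac mass $\bm{\delta}_a$ \emph{in the control slot}: first, extracting the $(\lambda_1+\lambda_2)$-strong convexity of $a\mapsto\Psi(t,a,\chi)$ from the mean-field convexity (H.\ref{assum:mfcE}(\ref{item:mfcE_convex})) after the substitution $\eta=\mu\t\bm{\delta}_a$ (one must check that the chosen couplings make the $\p_\mu$-term vanish and that $\tilde{\sE}[|\tilde{\a}'-\tilde{\a}|^2]=|a'-a|^2$ still yields the factor $\lambda_2$); and second, showing that $\p_a\Psi(t,\hat a,\chi)$ coincides with the precise combination $\p_a H^{\textrm{re}}+\int\p_\nu H^{\textrm{re}}\,\d\chi$ appearing in \eqref{eq:opti_markov}, i.e.\ that the $\nu$-derivative terms simultaneously absorb the $b_3$-contribution from the drift and the derivatives of $f_1,f_2$ propagated through $\bm{\delta}_a$. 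The measurability and Lipschitz regularity of the argmin, and the H\"older-in-time estimate, are then routine, following \cite[Lemma 3.3]{carmona2018a} and Example \ref{example:mfc} respectively.
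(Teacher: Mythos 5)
Your proposal is correct and follows essentially the same route as the paper: the functional $\Psi(t,a,\chi)$ you minimize is exactly the paper's $h(t,\chi,a)=\tilde{\sE}[H^{\textrm{re}}(t,\tilde{X},a,\tilde{\sP}_{(\tilde{X},a)},\tilde{Y})]$ (since $\tilde{\sP}_{(\tilde X,a)}=\mu\t\bm{\delta}_a$), the $(\lambda_1+\lambda_2)$-strong convexity, the identification of $\p_a\Psi$ with $\p_a H^{\textrm{re}}+\tilde{\sE}[\p_\nu H^{\textrm{re}}(\cdot)(a)]$, and the strong-convexity stability argument for the H\"older-in-time estimate all match the paper's proof. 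The only differences are presentational (you expand $\Psi$ using the structural hypotheses before differentiating, whereas the paper differentiates the general expression via Lebesgue's differentiation theorem and then simplifies).
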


\begin{proof}
We shall consider the function
$\hat{\a}:[0,T]\t  \cP_2(\sR^{n} \t \sR^{n}) \to \bA$ satisfying for all 
$(t,\chi)\in [0,T]\t  \cP_2(\sR^n\t \sR^n)$ that 
$$
\hat{\a}(t,\chi)=\argmin_{a\in \bA} h(t,\chi,a),
\q
\textnormal{
with
$h(t,\chi,a)\coloneqq \tilde{\sE}[H^{\textrm{re}}(t,\tilde{X},a,\tilde{\sP}_{(\tilde{X},a)},\tilde{Y})]$,
 }
$$
where  $(\tilde{X},\tilde{Y})\in L^2(\tilde{\Om},\tilde{\cF},\tilde{\sP};\sR^n\t \sR^n)$
has distribution $\chi$.

We first show the function $\hat{\a}$ is well-defined.
By using the linearity of $b$ and the convexity of $f$ in (H.\ref{assum:mfcE}), we see that 
the map
$\bA\ni a\mapsto h(t,\chi,a)\in \sR$ is strongly convex with factor $\lambda_1+\lambda_2>0$,
which admits a unique minimizer on the nonempty closed convex set $\bA$.
The measurability of $\hat{\a}$ follows from \cite[Lemma 3.3]{carmona2018a}.

Then, we prove that the function $\hat{\a}$ satisfies 
the optimality condition \eqref{eq:opti_markov}.
By using (H.\ref{assum:mfcE}), we have for almost all $(t,\om)\in [0,T]\t \tilde{\Om}$ that 
the mapping $\sR^n\ni a\mapsto  H^{\textrm{re}}(t,\tilde{X}(\om),a,\tilde{\sP}_{(\tilde{X},a)},\tilde{Y}(\om))$
is differentiable with the derivative being   at most of linear growth in $(\tilde{X}(\om),\tilde{Y}(\om))$.
Hence, Lebesgue's differentiation theorem shows that 
$h$ is differentiable  with respect to $a$ with 
the derivative 
\begin{align}\l{eq:p_ah}
\p_a h(t,\chi,a)=
 \tilde{\sE}[\p_aH^{\textrm{re}}(t,\tilde{X},a,\tilde{\sP}_{(\tilde{X},a)},\tilde{Y})]
 + \tilde{\sE}\big[
 \bar{\sE}[
 \p_\nu H^{\textrm{re}}(t,\tilde{X},a,\tilde{\sP}_{(\tilde{X},a)},\tilde{Y})(\bar{X},a)
 ]\big],
\end{align}
where $\bar{X}\in L^2(\bar{\Om},\bar{\cF},\bar{\sP};\sR^n)$ has distribution $\tilde{\sP}_{\tilde{X}}$.

Observe  that  $b_2\equiv 0$ and 
the structural condition of $f$ imply that
the reduced Hamiltonian \eqref{eq:hamiltonian_re}  is given by
$H^{\textrm{re}}(t,x,a,\eta,y)=\la b_0(t)+b_1(t)x +b_3(t)\bar{\eta}, y\ra +f_1(t,x,\pi_1\sharp \eta,\pi_2\sharp \eta)
+f_2(t,a,\pi_1\sharp \eta,\pi_2\sharp \eta)$.
Hence, 
for all $(t,x,y,a)$,
$\p_aH^{\textrm{re}}(t,x,a,\tilde{\sP}_{(\tilde{X},a)},y)
=\p_a f_2(t,a,\tilde{\sP}_{\tilde{X}},\bm{\delta}_a)$
and 
$\p_\nu H^{\textrm{re}}(t,x,a,\eta,y)(\cdot)$ can be chosen as a function
defined only on $\sR^k$ (not on $\sR^n\t \sR^k$ as in the general setting),
which simplifies \eqref{eq:p_ah} into:
\bb\l{eq:p_ah_2}
\p_a h(t,\chi,a)=
\p_aH^{\textrm{re}}(t,x,a,\tilde{\sP}_{(\tilde{X},a)},y)
 + \tilde{\sE}\big[
 \p_\nu H^{\textrm{re}}(t,\tilde{X},a,\tilde{\sP}_{(\tilde{X},a)},\tilde{Y})(a)
\big]
\ee
 for all $(t,\chi,a)\in[0,T]\t  \cP_2(\sR^{n} \t \sR^{n}) \t \bA$. 
Consequently, 
one can conclude from 
the fact that $\hat{\a}(t,\chi)$ is a minimizer
and  the identity that $\tilde{\sP}_{(\tilde{X},\hat{\a}(t,\chi))}=\phi(t,\chi)$
(see \eqref{eq:PushF})
 that  the function $\hat{\a}$ satisfies 
the optimality condition \eqref{eq:opti_markov}:
for all $(t,\chi,a)\in [0,T]\t  \cP_2(\sR^n\t \sR^n)\t \bA$,
\begin{align}\l{eq:mfcE_optimal_a}
\begin{split}
0
&\ge \la \p_a h(t,\chi,\hat{\a}(t,\chi)),\hat{\a}(t,\chi)-a\ra
\\
&=\la 
\p_aH^{\textrm{re}}(t,x,\hat{\a}(t,\chi),\phi(t,\chi),y)
\\
&\q 
+ \tilde{\sE}[\p_\nu H^{\textrm{re}}(t,\tilde{X},\hat{\a}(t,\chi),\phi(t,\chi),\tilde{Y})(\hat{\a}(t,\chi))]
 ,\hat{\a}(t,\chi)-a\ra,
\end{split}
\end{align}
whenever  $(\tilde{X},\tilde{Y})\in L^2(\tilde{\Om},\tilde{\cF},\tilde{\sP};\sR^n\t \sR^n)$
has  distribution $\chi$.
Note that
in the present setting
 \eqref{eq:opti_markov}
is independent of $(x,y)$
since $\p_aH^{\textrm{re}}(t,x,\hat{\a}(t,\chi),\phi(t,\chi),y)
=\p_a f_2(t,a,\tilde{\sP}_{\tilde{X}},\bm{\delta}_{\hat{\a}(t,\chi)})$.

Finally, we establish 
the spatial and time regularity of $\hat{\a}$.
Similar to \cite[Lemma 3.3]{carmona2018a},
by using
 $(\lambda_1+\lambda_2)$-strong convexity of $a\mapsto h(t,\chi,a)$,
we can show for all $t\in[0,T]$ that
$|\hat{\a}(t,\bm{\delta}_{0_{n+n}})-a_0|\le 
(\lambda_1+\lambda_2)^{-1}
|\p_a h(t,\bm{\delta}_{0_{n+n}},a_0)|$,
where $a_0$ an arbitrary element in $\bA$.
Then \eqref{eq:p_ah_2} and (H.\ref{assum:mfcE}(\ref{item:mfcE_growth})) imply
the estimate that 
$\|\hat{\a}(\cdot,\bm{\delta}_{0_{n+n}})\|_{L^\infty(0,T)}<\infty$. 
Now let $(t,\chi),(t',\chi')\in [0,T]\t \cP_2(\sR^n\t \sR^n)$,
$\hat{a}=\hat{\a}(t,\chi)$, $\hat{a}'=\hat{\a}(t',\chi')$
and $(\tilde{X},\tilde{Y}),(\tilde{X}',\tilde{Y}')\in L^2(\tilde{\Om},\tilde{\cF},\tilde{\sP};\sR^n\t \sR^n)$
have  distributions $\chi$ and $\chi'$, respectively.
By following 
a  similar argument as that for 
 Proposition \ref{prop:mfc_hat_a},
one can deduce from 
the  $(\lambda_1+\lambda_2)$-strong convexity of $a\mapsto h(t,\chi,a)$,
the expression of $\p_{a}h$ in \eqref{eq:p_ah_2} 
and the Lipschitz continuity of $(\p_a f_2,\p_\nu f_1,\p_\nu f_2)$
in (H.\ref{assum:mfcE}(\ref{item:mfcE_lipschitz}))
 that 
\begin{align*}
&|\hat{a}'-\hat{a}|
\le
C|\p_{a}h(t,\chi,\hat{a}')-  \p_{a}h(t',\chi',\hat{a}')|
\\
&\le
C\big(|\p_a f_2(t,\hat{a}',\tilde{\sP}_{\tilde{X}},\bm{\delta}_{\hat{a}'})
-\p_a f_2(t',\hat{a}',\tilde{\sP}_{\tilde{X}'},\bm{\delta}_{\hat{a}'})|
\\
&\q
+|\tilde{\sE}\big[
 \p_\nu H^{\textrm{re}}(t,\tilde{X},\hat{a}',\tilde{\sP}_{(\tilde{X},\hat{a}')},\tilde{Y})(\hat{a}')
 \big]
 -\tilde{\sE}\big[
 \p_\nu H^{\textrm{re}}(t',\tilde{X}',\hat{a}',\tilde{\sP}_{(\tilde{X}',\hat{a}')},\tilde{Y}')(\hat{a}')
 \big]|
 \big)
 \\
&\le
C\Big(
\cW_2(\chi,\chi')+
|\p_a f_2(t,\hat{a}',\tilde{\sP}_{\tilde{X}},\bm{\delta}_{\hat{a}'})
-\p_a f_2(t',\hat{a}',\tilde{\sP}_{\tilde{X}},\bm{\delta}_{\hat{a}'})|
+|b_3(t)-b_3(t')|\tilde{\sE}[|\tilde{Y}|]
\\
&\q
\big|
\tilde{\sE}\big[
 \p_\nu f_1(t,\tilde{X},\tilde{\sP}_{\tilde{X}},\bm{\delta}_{\hat{a}'})(\hat{a}')
 \big]
 -\tilde{\sE}\big[
 \p_\nu f_1(t',\tilde{X},\tilde{\sP}_{\tilde{X}},\bm{\delta}_{\hat{a}'})(\hat{a}')
 \big]
 \big|
 \\
&\q +
 \big|
\p_\nu f_2(t,\hat{a}',\tilde{\sP}_{\tilde{X}},\bm{\delta}_{\hat{a}'})(\hat{a}')
-\p_\nu f_2(t',\hat{a}',\tilde{\sP}_{\tilde{X}},\bm{\delta}_{\hat{a}'})(\hat{a}')
\big|
 \Big),
\end{align*}
where the constant $C$ is independent of $t,t',\chi,\chi'$.
Setting $t'=t$ in  the above estimate gives us that 
$|\hat{\a}(t,\chi)-\hat{\a}(t,\chi')|\le C\cW_2(\chi,\chi')$,
which along with 
$\|\hat{\a}(\cdot,\bm{\delta}_{0_{n+n}})\|_{L^\infty(0,T)}<\infty$
 implies 
$|\hat{\a}(t,\chi)|\le C(1+\|\chi\|_2)$.
The desired time regularity of $\hat{\a}$ then follows from the additional assumptions on the time regularity of coefficients.
\end{proof}

\end{Example}

\begin{Example}\l{example:lq}
In this example, we verify  (H.\ref{assum:mfcE_hat})  for 
  MFC problems whose  running costs
 are quadratic in the control variables,
 which extend  the commonly studied linear-quadratic models (see e.g.~\cite{acciaio2019,carmona2019,lauriere2020})
 to cost functions that are convex in the state variables.
 

For notational simplicity, 
 we consider a one-dimensional problem \eqref{eq:mfcE} with  $n=k=d=1$, an action set $\bA=\sR$ 
 and a 
  running cost function of the following form
\begin{align}\l{eq:f_quadratic}
&f(t,x,a,\eta) = \frac{1}{2} \left( f_1(t,x,\pi_1\sharp \eta) +q(t)a^2+
\bar{q}(t)\big(a-r(t)\bar{a}\big)^2
+2 c(t)xa \right),
\end{align}
where
$\pi_1\sharp\eta$ is 
 the first  marginal of $\eta$,
 $\bar{a} = \int a \mathrm{d} \eta(x,a)$, 
$ q,\bar{q}, r, c\in L^\infty(0,T;\sR)$,
 $q\ge \lambda_1>0$, $\bar{q}\geq 0$
and $f_1:[0,T]\t \sR\t \cP_2(\sR)\to \sR$ is a suitable function
such that 
the running cost $f$ satisfies (H.\ref{assum:mfcE}). 
Similar arguments can be adapted to verify (H.\ref{assum:mfcE_hat}) for 
multi-dimensional  running costs with a general quadratic dependence on the control variables.

In the present setting, we see   that 
the drift coefficient of \eqref{eq:mfcE_fwd} reads as
\begin{align*}
b(t,x,a,\eta)&=b_0(t)+b_1(t)x + b_2(t)a +  \beta(t) \bar{x} + \gamma(t) \bar{a},
\end{align*}
where 
$\bar{x} =  \int x \, \mathrm{d}\eta(x,a)$ and
$\beta,\gamma\in L^\infty(0,T;\sR)$ denote the first and second component of the function $b_3$ in  (H.\ref{assum:mfcE}(\ref{item:mfcE_lin})), respectively.
The definition of the reduced Hamiltonian \eqref{eq:hamiltonian_re} and the openness of the set $\bA$ imply that 
it suffices to find
a function $\hat{\a}:[0,T]\t \sR\t \sR\t \cP_2(\sR\t\sR)\to \bA$
such that 
for all $t\in [0,T]$, $X_t,Y_t\in L^2(\Om;\sR)$,
we have that $\a_t=\hat{\a}(t,X_t,Y_t,\sP_{(X_t,Y_t)})$ satisfies 
\begin{align}\label{eq:opti_cond}
b_2(t)Y_t +\gamma(t) \sE[Y_t] + \big(q(t) + \bar{q}(t)\big)\a_t + \bar{q}(t) r(t)(r(t) -2)\mathbb{E}[\a_t] + c(t) X_t = 0.
\end{align}
Taking expectations on both sides of \eqref{eq:opti_cond} gives  us that
\begin{align}\label{eq:ex_alpha}
\sE[\a_t] = \frac{-(b_2(t) + \gamma(t) ) \sE[Y_t] - c(t)\sE[X_t]}{q(t) + \bar{q}(t)\big(r(t)-1\big)^2},
\end{align}
which is well-defined since
 $q(t)\ge \lambda_1>0$
 and $\bar{q}(t)\ge 0$.
Then, by substituting \eqref{eq:ex_alpha}   into  \eqref{eq:opti_cond}, we 
see that it suffices to define 
 $\hat{\a}:[0,T]\t \sR\t \sR\t \cP_2(\sR\t\sR)\to \bA$
 to be the function
satisfying
for all $(t,x,y,\chi)\in[0,T]\t \sR\t \sR\t \cP_2(\sR\t\sR)$ that
\begin{align*}
\hat{\a}(t,x,y,\chi) = \frac{- c(t)x-b_2(t)y   + \psi(t) \int_\sR x\,\d\chi(x,y) +( -\gamma(t)  + \zeta(t)) \int_\sR y\,\d\chi(x,y)  }
{q(t) + \bar{q}(t)},
\end{align*}
with the coefficients 
\begin{align*}
& \psi(t) \coloneqq \frac{c(t) \bar{q}(t) r(t)(r(t) -2)}{q(t) + \bar{q}(t)\big(r(t)-1\big)^2},
\q
 \zeta(t)\coloneqq \frac{(b_2(t) + \gamma(t))\bar{q}(t) r(t)(r(t) -2)}{q(t) + \bar{q}(t)\big(r(t)-1\big)^2}.
\end{align*}
The fact that $q \ge \lambda_1>0, \bar{q}\ge 0$, and the boundedness of coefficients imply that $\hat{\a}$ 
is well-defined and 
satisfies 
(H.\ref{assum:mfcE_hat}(\ref{item:ex})).
By further assuming that 
the functions
$b_2,\gamma,q,\bar{q},r,c$ are $1/2$-H\"{o}lder continuous on $[0,T]$,
we can show  that $\hat{\a}$ satisfies 
(H.\ref{assum:mfcE_hat}(\ref{item:HC})).

Observe that in the present setting, the feedback map $\hat{\a}$ is 
independent of $(x,y)$ if and only if $b_2\equiv c\equiv 0$.
This agrees with the general condition in Proposition \ref{prop:control_measure}
under which
the optimal control of \eqref{eq:mfcE} is  deterministic.

\end{Example}

With (H.\ref{assum:mfcE_hat}(\ref{item:ex})) at  hand, 
we can express the
coupled MV-FBSDE 
\eqref{eq:mfcE_bsde_nonMarkov}
in an equivalent form
that is easier to analyze.
We shall seek
 a tuple of processes $(X^{\hat{\a}},Y^{\hat{\a}},Z^{\hat{\a}},\hat{\a})\in \cS^2(\sR^n)\t\cS^2(\sR^n)\t \cH^2(\sR^{n\t d})\t \cA$ satisfying for all $t\in [0,T]$ that
$\hat{\a}_t= \hat{\a}(t,
X^{\hat{\a}}_t,Y^{\hat{\a}}_t,\sP_{(X^{\hat{\a}}_t,Y^{\hat{\a}}_t)})$
and
\begin{align}\l{eq:mfc_fbsde_hat2}
\begin{split}
\mathrm{d}X^{\hat{\a}}_t&=
b\big(t,X^{\hat{\a}}_t,\hat{\a}_t,\sP_{(X^{\hat{\a}}_t,\hat{\a}_t)}\big)\, \d t
+\sigma(t,X^{\hat{\a}}_t,\sP_{X^{\hat{\a}}_t})\, \d W_t,
\\
\mathrm{d}Y^{\hat{\a}}_t&=-\big(\p_x H(t,X^{\hat{\a}}_t,\hat{\a}_t,\sP_{(X^{\hat{\a}}_t,\hat{\a}_t)},Y^{\hat{\a}}_t,Z^{\hat{\a}}_t)
\\
&\q
+\tilde{\sE}[\p_\mu H(t,\tilde{X}^{\hat{\a}}_t,\tilde{{\hat{\a}}},\sP_{(X^{\hat{\a}}_t,\hat{\a}_t)},\tilde{Y}^{\hat{\a}}_t,\tilde{Z}^{\hat{\a}}_t)(X^{\hat{\a}}_t,\hat{\a}_t)]\big)\,\d t
+Z^{\hat{\a}}_t\,\d W_t,
\\
X^{\hat{\a}}_0&=\xi_0,\q Y^{\hat{\a}}_T=\p_x g(X^{\hat{\a}}_T,\sP_{X^{\hat{\a}}_T})+\tilde{\sE}[\p_\mu g(\tilde{X}^{\hat{\a}}_T,\sP_{X^{\hat{\a}}_t})(X^{\hat{\a}}_T)],
\end{split}
\end{align}
where
$(\tilde{X}^{\hat{\a}},\tilde{Y}^{\hat{\a}}, \tilde{Z}^{\hat{\a}},\tilde{{\hat{\a}}})$
is an  independent copy
 of $({X}^{\hat{\a}},{Y}^{\hat{\a}}, {Z}^{\hat{\a}},{{\hat{\a}}})$ defined on a space 
$L^2(\tilde{\Om},\tilde{\cF},\tilde{\sP})$. 
Note that \eqref{eq:mfc_fbsde_hat2} can be equivalently formulated 
as follows:
for all $t\in [0,T]$,
\begin{subequations}\label{eq:mfc_fbsde2}
\begin{alignat}{2}
\mathrm{d} X_t&=\hat{b}(t,X_t,Y_t,\sP_{(X_t,Y_t)})\,\d t +\sigma (t,X_t, \sP_{X_t})\, \d W_t, 
\q
&& X_0=\xi_0,
\l{eq:mfc_fwd_fb2}
\\
\mathrm{d} Y_t&=-\hat{f}(t,X_t,Y_t,Z_t, \sP_{(X_t,Y_t,Z_t)})\,\d t+Z_t\,\d W_t,
\q && Y_T=\hat{g}(X_T,\sP_{X_T})
\l{eq:mfc_bwd_fb2}
\end{alignat}
\end{subequations}
with  coefficients defined as follows:
for all $(t,x,y,z,a,\mu, \chi, \rho)\in [0,T]\t \sR^n\t \sR^n \t \sR^{n \times d} \t \bA \t \cP_2(\sR^n)\t \cP_2(\sR^n \t \sR^{n}) \t \cP_2(\sR^n \t \sR^{n} \t  \sR^{n \t d})$,
\begin{align}\l{eq:mfc_coefficients2}
\begin{split}
&\hat{b}(t,x,y,\chi)=b(t,x,\hat{\a}(t,x,y,\chi),\phi(t,\chi)),
\\
&\hat{f}(t,x,y,z,\rho)
\\\q&=\p_x H(t,x,\hat{\a}(t,x,y,\pi_{1,2} \sharp \rho),\phi(t,\pi_{1,2} \sharp \rho),y,z)
\\
&\q
 +\int_{E}
\p_\mu H(t,\tilde{x},\hat{\a}(t,\tilde{x},\tilde{y},\pi_{1,2} \sharp \rho),\phi(t,\pi_{1,2} \sharp \rho),\tilde{y},\tilde{z})
(x,\hat{\a}(t,{x},{y},\pi_{1,2} \sharp \rho))\,\d \rho(\tilde{x},\tilde{y},\tilde{z}),
\\
&\hat{g}(x,\mu)=
\p_x g(x,\mu)+\int_{\sR^n}\p_\mu g(\tilde{x},\mu)(x)\,\d \mu(\tilde{x}),
\end{split}
\end{align}
where 
$E\coloneqq \sR^n\t \sR^n\t \sR^{n\t d}$,
$\phi(t,\chi)$ is defined as in (H.\ref{assum:mfcE_hat}(\ref{item:ex})) and
$\pi_{1,2} \sharp\rho\coloneqq\rho(\cdot \t \sR^{n \t d})$ is the marginal of the measure $\rho$ on $\sR^n\t\sR^n$.
In the subsequent analysis,
we shall show that 
\eqref{eq:mfc_fbsde2}
(or equivalently \eqref{eq:mfc_fbsde_hat2})
 admits a unique solution
and then construct an optimal control for \eqref{eq:mfcE}
by using the function $\hat{\a}$ in (H.\ref{assum:mfcE_hat}(\ref{item:ex}));
see Theorem \ref{TH:ControlRegularity} for details.

\section{Regularity of mean field controls}\l{sec:regularity_mfcE}

In this section, we study the regularity of solutions to  
the MV-FBSDE
 \eqref{eq:mfc_fbsde2}.
In particular,
we shall establish that \eqref{eq:mfc_fbsde2} admits a unique 
$1/2$-H\"{o}lder continuous solution in 
$ \cS^2(\sR^n)\t\cS^2(\sR^n)\t \cH^2(\sR^{n\t d})$,
which subsequently enables us to show that the MFC problem admits a unique $1/2$-H\"{o}lder continuous optimal control in $\cA$.

We start by showing that  the coefficients $(\hat{b},\sigma,\hat{f},\hat{g})$ of the MV-FBSDE \eqref{eq:mfc_fbsde2}
are  Lipschitz continuous with respect to the spatial variables (uniformly in the time variable),
and satisfy     a general monotonicity condition. 
{
The detailed steps for the proofs of the following propositions can be found in Appendix \ref{appendix:hat_a_existence}.}

\begin{Proposition}\label{prop:mcfE1}
Suppose (H.\ref{assum:mfcE}) and (H.\ref{assum:mfcE_hat}(\ref{item:ex})) hold,
and let the functions 
$(\hat{b},\hat{f},\hat{g})$ 
be defined as in \eqref{eq:mfc_coefficients2}.
Then there exists a constant $C\ge 0$
satisfying 
for all $t\in [0,T]$ that
 the functions 
$(\hat{b}(t,\cdot),\sigma(t,\cdot),\hat{f}(t,\cdot),\hat{g}(t,\cdot))$ 
are  $C$-Lipschitz continuous in all  variables
and satisfy the estimate
 $
 \|\hat{b}(\cdot,{0},{0},\bm{\delta}_{{0}_{n+n}})\|_{L^2(0,T)}
 +\|\sigma(\cdot,0,\bm{\delta}_{0_{n}})\|_{L^\infty(0,T)}
 +
 \|\hat{f}(\cdot,{0},{0}, {0},\bm{\delta}_{{0}_{n+n+nd}})\|_{L^\infty(0,T)}\le C$.

\end{Proposition}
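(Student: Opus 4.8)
The plan is to verify each of the three claimed properties—Lipschitz continuity of $\hat b$, of $\hat f$, and of $\hat g$, together with the growth bounds on their values at the origin—by unwinding the definitions in \eqref{eq:mfc_coefficients2} and combining them with the hypotheses already available: the affine structure of $b,\sigma$ from (H.\ref{assum:mfcE}(\ref{item:mfcE_lin})), the linear growth and Lipschitz continuity of the derivatives of $f,g$ from (H.\ref{assum:mfcE}(\ref{item:mfcE_growth})) and (H.\ref{assum:mfcE}(\ref{item:mfcE_lipschitz})), and the boundedness plus Lipschitz continuity of the feedback map $\hat\alpha$ from (H.\ref{assum:mfcE_hat}(\ref{item:ex})). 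I would first record the elementary fact that the composition of a globally Lipschitz function with a globally Lipschitz function is again globally Lipschitz, and that the composition of a function of linear growth with an affine function (with bounded coefficients) still has linear growth; these are the two workhorses of the argument.

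For $\hat g$, the argument is essentially immediate: $\hat g(x,\mu)=\p_x g(x,\mu)+\int_{\sR^n}\p_\mu g(\tilde x,\mu)(x)\,\d\mu(\tilde x)$, so Lipschitz continuity follows from (H.\ref{assum:mfcE}(\ref{item:mfcE_lipschitz})) together with the standard estimate that $\mu\mapsto \int \p_\mu g(\tilde x,\mu)(x)\,\d\mu(\tilde x)$ is Lipschitz in $\cW_2$ when $(\tilde x,x)\mapsto\p_\mu g(x,\mu)(\tilde x)$ is jointly Lipschitz (one couples $\mu,\mu'$ optimally and uses the triangle inequality, splitting the change of the integrand from the change of the integrating measure). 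The value bound at the origin is then $|\hat g(0,\bm\delta_{0_n})|\le |\p_x g(0,\bm\delta_{0_n})|+|\p_\mu g(0,\bm\delta_{0_n})(0)|\le \hat L$ by (H.\ref{assum:mfcE}(\ref{item:mfcE_growth})). For $\hat b$, since $b$ is affine with bounded coefficients $b_1,b_2,b_3$ and $b_0\in L^2(0,T)$, and $\hat b(t,x,y,\chi)=b(t,x,\hat\alpha(t,x,y,\chi),\phi(t,\chi))$, Lipschitz continuity in $(x,y,\chi)$ reduces to Lipschitz continuity of $\hat\alpha$ (given) and of $\chi\mapsto\phi(t,\chi)=\chi\circ(\mathrm{id},\hat\alpha(t,\cdot,\cdot,\chi))^{-1}$ in $\cW_2$; the latter I would establish by noting that pushing two measures $\chi,\chi'$ through maps that are close (because $\hat\alpha$ is Lipschitz in $\chi$) and applying them to an optimal coupling gives $\cW_2(\phi(t,\chi),\phi(t,\chi'))\le C\cW_2(\chi,\chi')$. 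The value bound $\|\hat b(\cdot,0,0,\bm\delta_{0_{n+n}})\|_{L^2(0,T)}\le C$ follows because $\hat b(t,0,0,\bm\delta_{0_{n+n}})=b_0(t)+b_2(t)\hat\alpha(t,0,0,\bm\delta_{0_{n+n}})+b_3(t)\overline{\phi(t,\bm\delta_{0_{n+n}})}$, and $|\hat\alpha(\cdot,0,0,\bm\delta_{0_{n+n}})|\le L_\alpha$ with $b_2,b_3\in L^\infty$ and $b_0\in L^2$.

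The main work, and the step I expect to be the genuine obstacle, is $\hat f$, because its definition in \eqref{eq:mfc_coefficients2} involves $\p_x H$ and $\p_\mu H$ evaluated at arguments that \emph{themselves} depend on $\hat\alpha$, $\phi$, and the measure $\rho$ through the $\pi_{1,2}\sharp\rho$ marginal, all under an integral against $\rho$; moreover $\p_x H$ and $\p_\mu H$ built from the Hamiltonian \eqref{eq:mfcE_hamiltonian} contain both the $b,\sigma$-derivative terms (which are affine, hence constant-coefficient in the derivatives, but paired with $y$ and $z$) and the $f$-derivative terms (which are only of linear growth). The strategy is: write $\p_x H = b_1^*(t)y + \sigma_1^*(t)\cdot z + \p_x f$ and $\p_\mu H$ as the analogous sum of an affine-in-$(y,z)$ piece plus $\p_\mu f$; then track each summand. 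The affine-in-$(y,z)$ pieces are globally Lipschitz in $(y,z)$ with bounded coefficients and constant in $x$, so they cause no growth issue, and composing with the Lipschitz maps $\hat\alpha,\phi$ keeps them Lipschitz; the only subtlety is that $\p_\mu f$ is merely of linear growth in its state/law arguments (H.\ref{assum:mfcE}(\ref{item:mfcE_growth})), which a priori is only \emph{locally} Lipschitz, so one must check that after composition with $\hat\alpha$ (which is globally Lipschitz with the linear-growth control $|\hat\alpha(\cdot,0,0,\bm\delta_0)|\le L_\alpha$) and integration against $\rho$, the \emph{joint} Lipschitz hypothesis in (H.\ref{assum:mfcE}(\ref{item:mfcE_lipschitz})) (which asserts the relevant versions of $\p_\mu f,\p_\nu f$ are $\tilde L$-Lipschitz in \emph{all} their arguments including $x',a'$) is what upgrades this to the needed global Lipschitz bound. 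I would handle the integral term $\int_E \p_\mu H(t,\tilde x,\hat\alpha(\dots),\phi(\dots),\tilde y,\tilde z)(x,\hat\alpha(\dots))\,\d\rho(\tilde x,\tilde y,\tilde z)$ by the same couple-and-split technique used for $\hat g$: couple $\rho,\rho'$ optimally, bound the change of the integrand pointwise (using joint Lipschitz continuity in the inner variables $\tilde x,\tilde y,\tilde z,x$ and in the measure argument $\pi_{1,2}\sharp\rho$ via $\cW_2(\phi(t,\pi_{1,2}\sharp\rho),\phi(t,\pi_{1,2}\sharp\rho'))\le C\cW_2(\rho,\rho')$), then bound the change of the integrating measure using the linear growth of $\p_\mu H$ and the finite second moment embedded in the $\cP_2$ metric. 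The value bound $\|\hat f(\cdot,0,0,0,\bm\delta_{0_{n+n+nd}})\|_{L^\infty(0,T)}\le C$ then drops out from $|\p_x f(t,0,0,\bm\delta_0)|\le\hat L$ (using $f(\cdot,0,0,\bm\delta_0)\in L^\infty$ only for $f$ itself; for the derivative we use the linear-growth bound at $R=0$), $|\hat\alpha(\cdot,0,0,\bm\delta_0)|\le L_\alpha$, boundedness of $b_1,\sigma_1$, and the linear-growth bound on $\p_\mu f,\p_\mu H$ at the origin. Since all constants produced are uniform in $t\in[0,T]$ (the coefficient bounds are $L^\infty$ in $t$, the Lipschitz constants $\tilde L,L_\alpha$ are $t$-independent), the final constant $C$ is independent of $t$, which completes the proof; the detailed bookkeeping is deferred to Appendix \ref{appendix:hat_a_existence} as the paper indicates.
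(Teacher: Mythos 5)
Your proposal is correct and its overall architecture coincides with the paper's: Lipschitz continuity of $\hat{g}$ via an optimal coupling of $\mu,\mu'$ and the joint Lipschitz continuity of the chosen versions of $\p_\mu g$ in (H.\ref{assum:mfcE}(\ref{item:mfcE_lipschitz})); Lipschitz continuity of $\hat{b}$ reduced to that of $\hat\alpha$ and of $\chi\mapsto\phi(t,\chi)$; the split of the $\hat{f}$ estimate into the $\p_x H$ term and the integrated $\p_\mu H$ term, using $\cW_2(\pi_{1,2}\sharp\rho,\pi_{1,2}\sharp\rho')\le\cW_2(\rho,\rho')$; and the origin bounds from $\|\hat\alpha(\cdot,0,0,\bm\delta_{0_{n+n}})\|_{L^\infty}\le L_\a$, $b_0\in L^2$, and the $L^\infty$ coefficient bounds. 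The one place where you genuinely diverge is the key estimate $\cW_2(\phi(t,\chi),\phi(t,\chi'))\le C\cW_2(\chi,\chi')$: the paper derives it by two applications of the Kantorovich duality theorem (Lemma \ref{lem:Kant}), passing to suprema over test functions $h_1,h_2$ with $h_1-h_2\le\omega$ and back, whereas you push an optimal coupling $\kappa\in\Pi(\chi,\chi')$ forward through the product map $((x,y),(x',y'))\mapsto((x,\hat\alpha(t,x,y,\chi)),(x',\hat\alpha(t,x',y',\chi')))$ and bound the resulting transport cost directly by the Lipschitz property of $\hat\alpha$. Your route is more elementary — it avoids introducing the duality lemma altogether and only uses that the image of a coupling under a product of measurable maps is a coupling of the image measures — and yields the same constant up to the same dependence on $L_\a$; the duality route buys nothing extra here beyond uniformity of presentation with other parts of the appendix. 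Your observation that the global (not merely local) Lipschitz continuity of the versions of $\p_\mu f,\p_\nu f$ in (H.\ref{assum:mfcE}(\ref{item:mfcE_lipschitz})) is what makes the composed integrand globally Lipschitz is exactly the point on which the $\Sigma_2$-type estimate rests.
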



\begin{Proposition}\label{prop:mcfE2}
Suppose (H.\ref{assum:mfcE}) and (H.\ref{assum:mfcE_hat}(\ref{item:ex})) hold,
and let the functions 
$(\hat{b},\hat{f},\hat{g})$ 
be defined as in \eqref{eq:mfc_coefficients2}.
Then the functions $(\hat{b},\sigma,\hat{f},\hat{g})$ 
satisfy 
for all 
$t\in [0,T]$, $i\in \{1,2\}$, 
 $\Theta_i\coloneqq (X_i,Y_i,Z_i)\in L^2(\Om; \sR^n\t \sR^m\t \sR^{m\t d})$
 that
$ \sE[\la \hat{g}(X_1,\sP_{X_1})-\hat{g}(X_2,\sP_{X_2}), X_1-X_2\ra]
\ge  0$ and
\begin{align}\l{eq:monotonicity}
\begin{split}
&\sE[\la \hat{b}(t,X_1,Y_1,\sP_{(X_1,Y_1)})-\hat{b}(t,X_2,Y_2,\sP_{(X_2,Y_2)}), Y_1-Y_2\ra]
 \\
&\quad 
+\sE[\la \sigma(t,X_1,\sP_{X_1})-\sigma(t,X_2,\sP_{X_2}),  Z_1-Z_2\ra]
\\
&\quad 
+ \sE[\la -\hat{f}(t,\Theta_1,\sP_{\Theta_1})+\hat{f}(t,\Theta_2,\sP_{\Theta_2}), X_1-X_2\ra] 
\\
&
 \le 
 -2(\lambda_1 + \lambda_2)
  \|\hat{\a}(t,X_1,Y_1,\sP_{(X_1,Y_1)})-\hat{\a}(t,X_2,Y_2,\sP_{(X_2,Y_2)})\|^2_{L^2},
\end{split}
\end{align}
with the constants $\lambda_1,\lambda_2$ in  (H.\ref{assum:mfcE}(\ref{item:mfcE_convex})).
\end{Proposition}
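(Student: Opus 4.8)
The plan is to run the classical ``Pontryagin convexity'' computation (as in \cite{carmona2015,carmona2018a}): the convexity of $(f,g)$ in (H.\ref{assum:mfcE}(\ref{item:mfcE_convex})) and the first-order optimality condition for $\hat{\a}$ in (H.\ref{assum:mfcE_hat}(\ref{item:ex})) together force the monotonicity \eqref{eq:monotonicity}; the only genuinely new point is carrying the bookkeeping through with the control interaction present. Fix $t$ and, for $i\in\{1,2\}$, abbreviate $\a_i\coloneqq\hat{\a}(t,X_i,Y_i,\sP_{(X_i,Y_i)})$, $\eta_i\coloneqq\phi(t,\sP_{(X_i,Y_i)})=\sP_{(X_i,\a_i)}$ (the last equality by \eqref{eq:PushF}), $\mu_i\coloneqq\sP_{X_i}$, $\vartheta_i\coloneqq(t,X_i,\a_i,\eta_i)$, and write $\Delta X\coloneqq X_1-X_2$, and $\Delta Y,\Delta Z,\Delta\a$ likewise; $(\tilde{X}_i,\tilde{Y}_i,\tilde{Z}_i,\tilde{\a}_i)$ will denote the usual independent copies. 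By \eqref{eq:mfc_coefficients2}, $\hat{b}(t,X_i,Y_i,\sP_{(X_i,Y_i)})=b(\vartheta_i)$ and $\hat{f}(t,\Theta_i,\sP_{\Theta_i})=\p_xH(\vartheta_i,Y_i,Z_i)+\tilde{\sE}[\p_\mu H(t,\tilde{X}_i,\tilde{\a}_i,\eta_i,\tilde{Y}_i,\tilde{Z}_i)(X_i,\a_i)]$.

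First I would substitute these expressions into the left-hand side of \eqref{eq:monotonicity} and use the affine structure (H.\ref{assum:mfcE}(\ref{item:mfcE_lin})): from it one reads off $\p_xH=b_1(t)^*y+\sigma_1(t)^*z+\p_xf$ and $\p_\mu H(t,x,a,\eta,y,z)(x',a')=b_{3,x}(t)^*y+\sigma_2(t)^*z+\p_\mu f(t,x,a,\eta)(x',a')$, where $b_{3,x}(t)^*$ is the top $n$ rows of $b_3(t)^*$, together with $b(\vartheta_1)-b(\vartheta_2)=b_1(t)\Delta X+b_2(t)\Delta\a+b_3(t)\sE[(\Delta X,\Delta\a)]$ and $\sigma(t,X_1,\mu_1)-\sigma(t,X_2,\mu_2)=\sigma_1(t)\Delta X+\sigma_2(t)\sE[\Delta X]$. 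The $b_1(t)\Delta X$- and $\sigma_1(t)\Delta X$-terms then cancel against the $\p_xH$-term, and the first-moment ($b_{3,x}(t)$-, $\sigma_2(t)$-) contributions cancel against the matrix part of the $\tilde{\sE}[\p_\mu H]$-term (using $\tilde{\sE}[b_{3,x}(t)^*\tilde{Y}_i]=b_{3,x}(t)^*\sE[Y_i]$, etc.); what should remain is
\begin{equation*}
\text{LHS of }\eqref{eq:monotonicity}=\sE\big[\la\Delta\a,\ b_2(t)^*\Delta Y+\b(t)\sE[\Delta Y]\ra\big]-\cR^{x}_f,
\end{equation*}
where $\b(t)$ denotes the bottom $k$ rows of $b_3(t)^*$ (as in Example \ref{example:mfc}) and, after a relabelling of the independent copies (Fubini together with the symmetry of the joint law of $((X_1,\a_1),(X_2,\a_2))$ and its copy),
\begin{equation*}
\cR^{x}_f\coloneqq\sE\big[\la\p_xf(\vartheta_1)-\p_xf(\vartheta_2),\Delta X\ra\big]+\sE\tilde{\sE}\big[\la\p_\mu f(\vartheta_1)(\tilde{X}_1,\tilde{\a}_1)-\p_\mu f(\vartheta_2)(\tilde{X}_2,\tilde{\a}_2),\ \tilde{X}_1-\tilde{X}_2\ra\big].
\end{equation*}

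Next I would bring in the optimality condition: evaluating \eqref{eq:opti_markov} with $\chi=\sP_{(X_i,Y_i)}$ along $(X_i,Y_i)$, and recalling $\p_aH^{\textrm{re}}=b_2(t)^*y+\p_af$ and $\p_\nu H^{\textrm{re}}(t,x,a,\eta,y)(x',a')=\b(t)y+\p_\nu f(t,x,a,\eta)(x',a')$, gives $\sP$-a.s.\ and for all $a\in\bA$ that $\la b_2(t)^*Y_i+\p_af(\vartheta_i)+\b(t)\sE[Y_i]+\tilde{\sE}[\p_\nu f(t,\tilde{X}_i,\tilde{\a}_i,\eta_i)(X_i,\a_i)],\ \a_i-a\ra\le0$; choosing $a=\a_{3-i}$, summing over $i$, taking $\sE$ and relabelling as above yields $\sE[\la\Delta\a,\ b_2(t)^*\Delta Y+\b(t)\sE[\Delta Y]\ra]\le-\cR^{a}_f$, where $\cR^{a}_f$ is the ``$\p_af,\p_\nu f$'' counterpart of $\cR^{x}_f$. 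Hence the left-hand side of \eqref{eq:monotonicity} is $\le-(\cR^{x}_f+\cR^{a}_f)$, and $\cR^{x}_f+\cR^{a}_f$ is precisely the two-sided first-order remainder of $f$ in $(x,a,\eta)$ between $\vartheta_1$ and $\vartheta_2$ with the coupled copies $(\tilde{X}_i,\tilde{\a}_i)$. Writing the convexity inequality (H.\ref{assum:mfcE}(\ref{item:mfcE_convex})) twice (once with base point $(X_2,\a_2,\eta_2)$ and increment $(X_1,\a_1,\eta_1)$, once with the roles reversed) and adding would give $\cR^{x}_f+\cR^{a}_f\ge2\lambda_1\|\Delta\a\|^2_{L^2}+2\lambda_2\|\Delta\a\|^2_{L^2}$ after taking $\sE$, which is \eqref{eq:monotonicity} since $\Delta\a=\hat{\a}(t,X_1,Y_1,\sP_{(X_1,Y_1)})-\hat{\a}(t,X_2,Y_2,\sP_{(X_2,Y_2)})$. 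The monotonicity of $\hat{g}$ follows from the same expansion-and-relabelling together with the convexity of $g$ alone, with no optimality condition needed.

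The part that needs care, rather than any deep idea, is the bookkeeping: cleanly separating inside $\p_\mu H$ and $\p_\nu H^{\textrm{re}}$ the affine-in-$(y,z)$ matrix blocks (which cancel against the forward drift and diffusion increments) from the genuine derivatives of $f$ (which feed the convexity bound), and performing the Fubini/relabelling on the independent copies consistently so that every measure-derivative term lands in exactly the shape appearing in (H.\ref{assum:mfcE}(\ref{item:mfcE_convex})). I would also verify that passing from the pointwise optimality condition to the integrated inequality is legitimate, which it is because $\a_i\in\cA$ (so all terms are square-integrable) by the linear growth of $\hat{\a}$ in (H.\ref{assum:mfcE_hat}(\ref{item:ex})) together with $X_i,Y_i\in L^2$.
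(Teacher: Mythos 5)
Your proof is correct and follows essentially the same route as the paper's: the paper merely packages your explicit affine cancellations into a convexity inequality for the full Hamiltonian $H$ (which is exactly linearity of $(b,\sigma)$ in $(y,z)$ plus convexity of $f$), and then, just as you do, invokes the pointwise optimality condition \eqref{eq:opti_markov} together with the Fubini/relabelling of independent copies to absorb the $\p_a H^{\textrm{re}}$ and $\p_\nu H^{\textrm{re}}$ contributions and produce the $-2(\lambda_1+\lambda_2)\|\Delta\a\|_{L^2}^2$ term. The monotonicity of $\hat g$ is handled identically in both arguments, by symmetrizing the convexity inequality for $g$.
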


{
We then adapt the method of continuation in \cite{bensoussan2015,carmona2015}
 to the present setting,
 and establish the well-posedness and stability of \eqref{eq:mfc_fbsde2}. 
To do so, we first present  a stability result for the following family of MV-FBSDEs:
for $t\in [0,T]$,
\begin{align}\l{eq:moc}
\begin{split}
\d X_t&=(\lambda \hat{b}(t,X_t,Y_t,\sP_{(X_t,Y_t)})+\cI^{\hat{b}}_t)\,\d t +
(\lambda\sigma (t,X_t, \sP_{X_t})+\cI^\sigma_t)\, \d W_t, 
\\
\d Y_t&=-(\lambda \hat{f}(t,X_t,Y_t,Z_t, \sP_{(X_t,Y_t,Z_t)})+\cI^{\hat{f}}_t)\,\d t+Z_t\,\d W_t,
\\
X_0&=\xi,\q Y_T=\lambda \hat{g}(X_T,\sP_{X_T})+\cI^{\hat{g}}_T,
\end{split}
\end{align}
where  $\lambda\in [0,1]$, 
$\xi\in L^2(\cF_0;\sR^n)$,
$(\cI^{\hat{b}},\cI^\sigma,\cI^{\hat{f}})\in \cH^2(\sR^n\t\sR^{n\t d}\t \sR^n)$ and $\cI^{\hat{g}}_T\in L^2(\cF_T;\sR^n)$
are given.
The proof is based on Propositions \ref{prop:mcfE1} and \ref{prop:mcfE2},
whose detail
 is presented in Appendix \ref{appendix:hat_a_existence}.
\begin{Lemma}\l{lemma:mono_stab}
Suppose (H.\ref{assum:mfcE}) and (H.\ref{assum:mfcE_hat}(\ref{item:ex})) hold,
and let the functions 
$(\hat{b},\hat{f},\hat{g})$ 
be defined as in \eqref{eq:mfc_coefficients2}.
 Then, there exists a constant $C>0$ such that,  
 for all $\lambda_0\in [0,1]$,
for every   
${\Theta}\coloneqq(X,Y, Z)\in  \cS^2(\sR^n) \t \cS^2(\sR^n) \t \cH^2(\sR^{n\t d})$
satisfying \eqref{eq:moc}
with 
$\lambda=\lambda_0$,
  functions $(\hat{b},\sigma,\hat{f},\hat{g})$
  and some
$(\cI^{\hat{b}},\cI^\sigma,\cI^{\hat{f}})\in \cH^2(\sR^n\t\sR^{n\t d}\t \sR^n)$,
 $\cI^{\hat{g}}_T\in L^2(\cF_T;\sR^n)$,
 $\xi\in L^2(\cF_0;\sR^n)$,
 and for every 
$ \bar{\Theta}\coloneqq(\bar{X},\bar{Y}, \bar{Z})\in  \cS^2(\sR^n) \t \cS^2(\sR^n) \t \cH^2(\sR^{n\t d})$
satisfying  \eqref{eq:moc}
with 
$\lambda=\lambda_0$,
another 4-tuple of Lipschitz functions $(\bar{b},\bar{\sigma},\bar{f},\bar{g})$ 
 and some 
$(\bar{\cI}^b,\bar{\cI}^\sigma,\bar{\cI}^f)\in \cH^2(\sR^n\t\sR^{n\t d}\t \sR^n)$,
 $\bar{\cI}^g_T\in L^2(\cF_T;\sR^n)$,
 $\bar{\xi}\in L^2(\cF_0;\sR^n)$, we have that 
 \begin{align}\l{eq:mono_stab}
 \begin{split}
 &\|X-\bar{X}\|_{\cS^2}^2+ \|Y-\bar{Y}\|_{\cS^2}^2+ \|Z-\bar{Z}\|_{\cH^2}^2
 \\
& \le C\bigg\{\|\xi-\bar{\xi}\|_{L^2}^2+
\|\lambda_0 (\hat{g}(\bar{X}_T,\sP_{\bar{X}_T})-\bar{g}(\bar{X}_T,\sP_{\bar{X}_T}))+\cI^{\hat{g}}_T-\bar{\cI}^g_T\|_{L^2}^2
\\
&\quad
+\|\lambda_0 (\hat{b}(\cdot,\bar{X}_\cdot,\bar{Y}_\cdot,\sP_{(\bar{X},\bar{Y})_\cdot})-\bar{b}(\cdot,\bar{X}_\cdot,\bar{Y}_\cdot,\sP_{(\bar{X},\bar{Y})_\cdot}))
+\cI^{\hat{b}}-\bar{\cI}^b\|_{\cH^2}^2
\\
&\quad
+\|\lambda_0 (\sigma(\cdot,\bar{X}_\cdot,\sP_{\bar{X}_\cdot})-\bar{\sigma}(\cdot,\bar{X}_\cdot,\sP_{\bar{X}_\cdot}))
+\cI^\sigma-\bar{\cI}^\sigma\|_{\cH^2}^2
\\
&\quad
+\|\lambda_0 (\hat{f}(\cdot,\bar{\Theta}_\cdot,\sP_{\bar{\Theta}_\cdot})-\bar{f}(\cdot,\bar{\Theta}_\cdot,\sP_{\bar{\Theta}_\cdot}))
+\cI^{\hat{f}}-\bar{\cI}^f\|_{\cH^2}^2
\bigg\}.
\end{split}
  \end{align} 
\end{Lemma}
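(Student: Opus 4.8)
The plan is to adapt the standard a priori estimate for forward--backward systems under a monotonicity condition (in the spirit of \cite{bensoussan2015,carmona2015}): one feeds the monotonicity bounds of Propositions \ref{prop:mcfE1} and \ref{prop:mcfE2} into an It\^o expansion of $\langle\delta X_t,\delta Y_t\rangle$, and combines this with the classical a priori estimates for McKean--Vlasov SDEs and BSDEs. Write $\delta X=X-\bar X$, $\delta Y=Y-\bar Y$, $\delta Z=Z-\bar Z$, $\delta\xi=\xi-\bar\xi$, and set $I^b_t\coloneqq\lambda_0\big(\hat b-\bar b\big)(t,\bar X_t,\bar Y_t,\sP_{(\bar X_t,\bar Y_t)})+\cI^{\hat b}_t-\bar\cI^b_t$, and likewise $I^\sigma_t$, $I^f_t$, $I^g_T$; these are precisely the quantities whose norms, together with $\|\delta\xi\|_{L^2}$, constitute the braces in \eqref{eq:mono_stab}, which I abbreviate by $\mathrm D^2$. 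Then $(\delta X,\delta Y,\delta Z)$ solves the affine FBSDE whose drift, diffusion and terminal value are $\lambda_0$ times the increments of the \emph{true} coefficients $(\hat b,\sigma,\hat f,\hat g)$ between $\Theta_\cdot$ and $\bar\Theta_\cdot$ plus the source terms $(I^b,I^\sigma,I^f,I^g_T)$, and the goal is $\|\delta X\|_{\cS^2}^2+\|\delta Y\|_{\cS^2}^2+\|\delta Z\|_{\cH^2}^2\le C\mathrm D^2$.

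I would first record two building blocks, uniform in $\lambda_0\in[0,1]$. (i) \emph{Forward bound.} Since $\hat b(t,x,y,\chi)=b(t,x,\hat\a(t,x,y,\chi),\phi(t,\chi))$ with $b$ affine (H.\ref{assum:mfcE}(\ref{item:mfcE_lin})), the map $\hat b$ depends on $y$ only through $\hat\a$ and the $\chi$-mean of $\hat\a$, while $\sigma$ is Lipschitz in $x$ alone; so, with $\delta\hat\a_t\coloneqq\hat\a(t,X_t,Y_t,\sP_{(X_t,Y_t)})-\hat\a(t,\bar X_t,\bar Y_t,\sP_{(\bar X_t,\bar Y_t)})$, Gronwall's inequality and the Burkholder--Davis--Gundy (BDG) inequality give the \emph{sharp} bound $\|\delta X\|_{\cS^2}^2\le C\big(\|\delta\xi\|_{L^2}^2+\int_0^T\|\delta\hat\a_t\|_{L^2}^2\,\d t+\|I^b\|_{\cH^2}^2+\|I^\sigma\|_{\cH^2}^2\big)$, while using only Lipschitz continuity of $\hat b,\sigma$ one gets the always-available \emph{weak} bound $\|\delta X\|_{\cS^2}^2\le C\big(\|\delta\xi\|_{L^2}^2+\|I^b\|_{\cH^2}^2+\|I^\sigma\|_{\cH^2}^2\big)+C\lambda_0\|\delta Y\|_{\cS^2}^2$. (ii) \emph{Backward bound.} Applying It\^o to $e^{\beta t}|\delta Y_t|^2$ with $\beta$ large enough to absorb the Lipschitz and distribution-dependent parts of the drift (the latter involving $\|\delta Z_t\|_{L^2}$, since $\hat f$ depends on the law of $(X,Y,Z)$), together with BDG and $|\delta Y_T|\le C(\lambda_0|\delta X_T|+|I^g_T|)$, gives $\|\delta Y\|_{\cS^2}^2+\|\delta Z\|_{\cH^2}^2\le C\lambda_0^2\|\delta X\|_{\cS^2}^2+C\big(\|I^g_T\|_{L^2}^2+\|I^f\|_{\cH^2}^2\big)$.

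Then I would split on the size of $\lambda_0$. For $\lambda_0$ below a threshold $\lambda_*>0$ depending only on the Lipschitz constants and $T$, inserting (ii) into the weak bound of (i) yields $\|\delta X\|_{\cS^2}^2\le C\lambda_0^3\|\delta X\|_{\cS^2}^2+C\mathrm D^2$, which absorbs, and (ii) then closes the estimate. For $\lambda_0\ge\lambda_*$ I would use monotonicity: apply It\^o to $\langle\delta X_t,\delta Y_t\rangle$ over $[0,T]$, take expectations (the martingale parts vanish), and use Proposition \ref{prop:mcfE2} at each $t$ with $\Theta_1=\Theta_t$, $\Theta_2=\bar\Theta_t$ to bound the combined increments of $(\hat b,\sigma,\hat f)$ by $-2\lambda_0(\lambda_1+\lambda_2)\int_0^T\|\delta\hat\a_t\|_{L^2}^2\,\d t\le0$, together with the monotonicity of $\hat g$, which gives $\sE[\langle\delta X_T,\delta Y_T\rangle]\ge\sE[\langle\delta X_T,I^g_T\rangle]$; since $\lambda_0\ge\lambda_*$ this yields $\int_0^T\|\delta\hat\a_t\|_{L^2}^2\,\d t\le C\big(|\sE[\langle\delta\xi,\delta Y_0\rangle]|+|\sE[\langle\delta X_T,I^g_T\rangle]|+|\sE\!\int_0^T(\langle I^b_t,\delta Y_t\rangle-\langle\delta X_t,I^f_t\rangle+\langle I^\sigma_t,\delta Z_t\rangle)\,\d t|\big)$. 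Bounding the right-hand side by $\tfrac C\eps\mathrm D^2+\eps C\big(\|\delta X\|_{\cS^2}^2+\|\delta Y\|_{\cS^2}^2+\|\delta Z\|_{\cH^2}^2\big)$ with Young's inequality, feeding this control of $\int_0^T\|\delta\hat\a_t\|_{L^2}^2\,\d t$ into the sharp bound of (i) and then into (ii), summing the three norms, and choosing $\eps$ small enough to absorb finishes the proof.

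The main obstacle is the circular dependence between the forward bound (which needs $\int_0^T\|\delta\hat\a_t\|_{L^2}^2\,\d t$) and the backward bound (which needs $\|\delta X\|_{\cS^2}^2$). This is broken by the strong monotonicity of $(\hat b,\sigma,\hat f,\hat g)$ from Proposition \ref{prop:mcfE2} when $\lambda_0$ is bounded below, and by the weakness of the forward--backward coupling when $\lambda_0$ is small; the structural input behind the sharp forward bound is that uncontrolled volatility and affine drift let $\hat b$ be written as $b$ composed with the feedback control $\hat\a$, so $\hat b$ sees $\delta Y$ only through $\delta\hat\a$. The remaining work is bookkeeping: keeping all constants uniform in $\lambda_0$, and absorbing the mean-field terms in the BSDE estimate via the exponential weight.
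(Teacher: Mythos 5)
Your proposal is correct, and its core is the same as the paper's: It\^o's formula applied to $\la \delta X_t,\delta Y_t\ra$, the add-and-subtract of $(\hat b,\sigma,\hat f,\hat g)$ evaluated along $\bar\Theta$ so that Proposition \ref{prop:mcfE2} yields control of $\lambda_0\int_0^T\|\delta\hat\a_t\|_{L^2}^2\,\d t$ up to Young-absorbable terms, the observation that the forward drift sees $\delta Y$ only through $\delta\hat\a$ (so Gronwall gives $\|\delta X\|_{\cS^2}^2\lesssim \lambda_0^2\int\|\delta\hat\a\|_{L^2}^2+\cdots$), and finally the standard backward estimate plus an $\eps$-absorption. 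The one place you diverge is the case split on $\lambda_0$: it is not needed. Since the forward equation carries the factor $\lambda_0$ in its drift and diffusion, the quantity you must control in the forward Gronwall bound is $\lambda_0^2\int\|\delta\hat\a\|_{L^2}^2\le \lambda_0\int\|\delta\hat\a\|_{L^2}^2$, which is \emph{exactly} what the monotonicity inequality delivers (the paper keeps the product $\lambda_0\int_0^T\phi_1\,\d t$ intact rather than dividing by $\lambda_0$), so the estimate closes uniformly for all $\lambda_0\in[0,1]$ without a separate small-$\lambda_0$ contraction argument. Your two-case version is nevertheless valid — the small-$\lambda_0$ branch via the weak Lipschitz bound and the factor $\lambda_0^3$ is a standard alternative in continuation arguments — it just costs an extra threshold constant $\lambda_*$ that the paper's bookkeeping makes unnecessary.
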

}

Now we are ready to  establish the well-posedness  and stability   of \eqref{eq:mfc_fbsde2}.

\begin{Theorem}\label{TH:fbsde_wellposedness}
{
Suppose (H.\ref{assum:mfcE}) and (H.\ref{assum:mfcE_hat}(\ref{item:ex})) hold. Then, for all  
$t\in [0,T]$ and 
 $\xi\in L^2(\cF_t;\sR^n)$,
there exists a unique triple
$(X^{t,\xi},Y^{t,\xi},Z^{t,\xi}) \in  \cS^2(t,T;\sR^n) \t \cS^2(t,T;\sR^n) \t \cH^2(t,T;\sR^{n \t d})$ 
satisfying \eqref{eq:mfc_fbsde2} on $[t,T]$ with  
the initial condition $X^{t,\xi}_t=\xi$.
Moreover, there exists a constant $C>0$ such that 
it holds for all $t\in [0,T]$  and $\xi,\xi'\in L^2(\cF_t;\sR^n)$ that 
$\|Y^{t,\xi}_t-Y^{t,\xi'}_t\|_{L^2}\le C\|\xi-\xi'\|_{L^2}$,
and
$\|X^{t,\xi}\|_{\cS^2(t,T;\sR^n)}+\|Y^{t,\xi}\|_{ \cS^2(t,T;\sR^n)}+\|Z^{t,\xi}\|_{\cH^2(t,T;\sR^{n\t d})}\le C(1+\|\xi\|_{L^2})$.}
\end{Theorem}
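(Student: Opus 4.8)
The plan is to prove existence and uniqueness by the method of continuation in the parameter $\lambda$, using Lemma \ref{lemma:mono_stab} as the engine, and to extract the two stated bounds as corollaries of the stability estimate \eqref{eq:mono_stab}. For the remainder of the argument I work on a generic interval $[t,T]$; since all constants in Propositions \ref{prop:mcfE1}, \ref{prop:mcfE2} and Lemma \ref{lemma:mono_stab} are independent of the left endpoint, nothing is lost by writing $[0,T]$ below. Define $\mathcal{S}$ to be the set of $\lambda_0\in[0,1]$ such that for \emph{every} data tuple $(\cI^{\hat b},\cI^\sigma,\cI^{\hat f})\in\cH^2(\sR^n\t\sR^{n\t d}\t\sR^n)$, $\cI^{\hat g}_T\in L^2(\cF_T;\sR^n)$ and $\xi\in L^2(\cF_0;\sR^n)$, the MV-FBSDE \eqref{eq:moc} with $\lambda=\lambda_0$ has a solution $(X,Y,Z)\in\cS^2\t\cS^2\t\cH^2$. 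First I would observe that $\mathcal S$ is nonempty: for $\lambda_0=0$ the forward equation is a plain McKean--Vlasov SDE with Lipschitz coefficients $\cI^\sigma$, $\cI^{\hat b}$ depending on $(X,Y)$ only through the inhomogeneities (hence trivially solvable, in fact $X$ does not even couple to $Y$ when $\lambda_0=0$), and the backward equation is then a standard mean-field BSDE with terminal datum $\cI^{\hat g}_T$ and driver $\cI^{\hat f}$, which is well-posed by classical results. Uniqueness on $\mathcal S$ is immediate from \eqref{eq:mono_stab} applied with the same coefficients $(\hat b,\sigma,\hat f,\hat g)$ and the same data for both solutions: the right-hand side vanishes, forcing $X=\bar X$, $Y=\bar Y$, $Z=\bar Z$.

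Next I would run the continuation step. Fix $\lambda_0\in\mathcal S$ and let $\delta>0$ be the constant $1/C$ (with $C$ from Lemma \ref{lemma:mono_stab}); I claim $\lambda_0+\eta\in\mathcal S$ for every $\eta\in(0,\delta)$ with $\lambda_0+\eta\le 1$. Given target data, I set up the standard Picard map: starting from an arbitrary $(X^{(0)},Y^{(0)},Z^{(0)})\in\cS^2\t\cS^2\t\cH^2$, define $(X^{(m+1)},Y^{(m+1)},Z^{(m+1)})$ to be the solution at parameter $\lambda_0$ of \eqref{eq:moc} with modified inhomogeneities
\[
\cI^{\hat b}+\eta\,\hat b\big(\cdot,X^{(m)}_\cdot,Y^{(m)}_\cdot,\sP_{(X^{(m)},Y^{(m)})_\cdot}\big),\q
\cI^\sigma+\eta\,\sigma\big(\cdot,X^{(m)}_\cdot,\sP_{X^{(m)}_\cdot}\big),
\]
and similarly for the driver and terminal condition; this solution exists by $\lambda_0\in\mathcal S$. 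Applying \eqref{eq:mono_stab} to two consecutive iterates (same coefficients $\hat b,\sigma,\hat f,\hat g$, same $\xi$, differing only in the $\eta$-perturbed inhomogeneities), together with the global Lipschitz property from Proposition \ref{prop:mcfE1}, yields
\[
\|\Theta^{(m+1)}-\Theta^{(m)}\|^2_{\cS^2\t\cS^2\t\cH^2}\le C\eta^2\,\|\Theta^{(m)}-\Theta^{(m-1)}\|^2_{\cS^2\t\cS^2\t\cH^2},
\]
so the iteration is a contraction once $\eta<\delta$, and the limit solves \eqref{eq:moc} at $\lambda=\lambda_0+\eta$. Since $\delta$ does not depend on $\lambda_0$, finitely many such steps cover $[0,1]$, giving $1\in\mathcal S$; specializing to $\cI^{\hat b}=\cI^\sigma=\cI^{\hat f}=0$, $\cI^{\hat g}_T=0$ and $\lambda=1$ gives existence and uniqueness for \eqref{eq:mfc_fbsde2} itself.

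Finally I would read off the two quantitative estimates from \eqref{eq:mono_stab} applied at $\lambda_0=1$ with the genuine coefficients on both sides. For the Lipschitz bound on $Y_t$ in the initial datum, compare $(X^{t,\xi},Y^{t,\xi},Z^{t,\xi})$ and $(X^{t,\xi'},Y^{t,\xi'},Z^{t,\xi'})$: all inhomogeneities are zero and the coefficients coincide, so the right-hand side of \eqref{eq:mono_stab} collapses to $C\|\xi-\xi'\|_{L^2}^2$, and since $Y^{t,\xi}_t$, $Y^{t,\xi'}_t$ are $\cF_t$-measurable we get $\|Y^{t,\xi}_t-Y^{t,\xi'}_t\|_{L^2}^2\le\|Y^{t,\xi}-Y^{t,\xi'}\|_{\cS^2}^2\le C\|\xi-\xi'\|_{L^2}^2$. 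For the growth bound, compare $(X^{t,\xi},Y^{t,\xi},Z^{t,\xi})$ with the zero triple, which solves \eqref{eq:moc} at $\lambda_0=1$ with coefficients $(\bar b,\bar\sigma,\bar f,\bar g)\equiv 0$, $\bar\xi=0$ and inhomogeneities $\bar\cI^b=\bar\cI^\sigma=\bar\cI^f=0$, $\bar\cI^g_T=0$; then \eqref{eq:mono_stab} bounds $\|X^{t,\xi}\|_{\cS^2}^2+\|Y^{t,\xi}\|_{\cS^2}^2+\|Z^{t,\xi}\|_{\cH^2}^2$ by $C\{\|\xi\|_{L^2}^2+\|\hat g(0,\bm\delta_{0_n})\|_{L^2}^2+\|\hat b(\cdot,0,0,\bm\delta_{0_{n+n}})\|_{\cH^2}^2+\|\sigma(\cdot,0,\bm\delta_{0_n})\|_{\cH^2}^2+\|\hat f(\cdot,0,0,0,\bm\delta_{0_{n+n+nd}})\|_{\cH^2}^2\}$, and the last four terms are finite by the norm estimate in Proposition \ref{prop:mcfE1}; taking square roots gives $C(1+\|\xi\|_{L^2})$. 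The main obstacle is purely bookkeeping: one must check that the monotonicity and Lipschitz constants furnished by Propositions \ref{prop:mcfE1}--\ref{prop:mcfE2} (and hence the contraction radius $\delta$ in Lemma \ref{lemma:mono_stab}) are uniform in the starting time $t\in[0,T]$ so that the finite cover of $[0,1]$ is the same for every interval $[t,T]$; this is inherited from the fact that the coefficients $(\hat b,\sigma,\hat f,\hat g)$ are defined pointwise in $t$ with $t$-independent structural constants, so no genuine difficulty arises.
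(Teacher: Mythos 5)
Your proposal is correct and follows essentially the same route as the paper: continuation in $\lambda$ with step size uniform in $\lambda_0$, where the induction step is exactly the Banach fixed-point argument for the map sending $\Theta^{(m)}$ to the solution at level $\lambda_0$ with $\eta$-perturbed inhomogeneities (your explicit Picard iteration is the same contraction the paper writes via the map $\Xi$), and both quantitative estimates are read off from Lemma \ref{lemma:mono_stab} with the same choices of comparison data. The only cosmetic difference is that you spell out the uniformity in the initial time $t$, which the paper dispatches with a one-line remark.
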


\begin{proof}
{
We shall establish the  well-posedness, stability and \textit{a priori} estimates
 for \eqref{eq:mfc_fbsde2} with
an initial time
 $t=0$ and initial state $\xi_0\in L^2(\cF_0;\sR^n)$
 by applying Lemma \ref{lemma:mono_stab}. Similar arguments apply to a general 
 initial time 
 $t\in [0,T]$ and initial state $\xi\in L^2(\cF_t;\sR^n)$.

Let us start by proving 
the unique solvability of \eqref{eq:mfc_fbsde2} with a given $\xi_0\in L^2(\cF_0;\sR^n)$.
To simplify the notation, 
for every $\lambda_0\in [0,1]$, we say $(\cP_{\lambda_0})$ holds if 
 for any 
 $\xi\in  L^2(\cF_0;\sR^n)$,
$(\cI^{\hat{b}},\cI^\sigma,\cI^{\hat{f}})\in \cH^2(\sR^n\t\sR^{n\t d}\t \sR^n)$ and $\cI^{\hat{g}}_T\in L^2(\cF_T;\sR^n)$,
\eqref{eq:moc} with $\lambda=\lambda_0$ 
admits a unique solution in $\sB\coloneqq \cS^2(\sR^n) \t \cS^2(\sR^n) \t \cH^2(\sR^{n\t d})$.
It is clear that $(\cP_{0})$ holds since \eqref{eq:moc} is decoupled. 
Now we show 
 there exists a constant $\delta>0$, such that 
 if $(\cP_{\lambda_0})$   holds for some $\lambda_0\in [0,1)$,
 then $(\cP_{\lambda'_0})$ also holds for  all $\lambda_0'\in (\lambda_0,\lambda_0+\delta]\cap [0,1]$.
Note that this claim along with  the method of continuation
 implies the desired unique solvability of \eqref{eq:mfc_fbsde2} (i.e., \eqref{eq:moc} with $\lambda=1$,
$(\cI^{\hat{b}},\cI^\sigma,\cI^{\hat{f}},\cI^{\hat{g}}_T)=0$, $\xi=\xi_0$).

To establish the desired claim, let  $\lambda_0\in [0,1)$
be a constant for which $(\cP_{\lambda_0})$ holds,
$\eta\in [0,1]$ and 
$(\tilde{\cI}^{\hat{b}},\tilde{\cI}^\sigma,\tilde{\cI}^{\hat{f}})\in \cH^2(\sR^n\t\sR^{n\t d}\t \sR^n)$, $\tilde{\cI}^{\hat{g}}_T\in L^2(\cF_T;\sR^n)$, 
$\xi\in  L^2(\cF_0;\sR^n)$
be
arbitrarily given coefficients. Then,
we introduce the following mapping $\Xi:\sB\to \sB$
such that 
for all $\Theta=(X,Y,Z)\in \sB$, $\Xi(\Theta)\in \sB$ is the solution to \eqref{eq:moc} 
with $\lambda=\lambda_0$,
$\cI^{\hat{b}}_t=\eta \hat{b}(t,X_t,Y_t\sP_{(X_t,Y_t)})+\tilde{\cI}^{\hat{b}}_t$,
$\cI^\sigma_t=\eta \sigma (t,X_t, \sP_{X_t})+\tilde{\cI}^\sigma_t$,
$\cI^{\hat{f}}_t=\eta \hat{f}(t,\Theta_t, \sP_{\Theta_t})+\tilde{\cI}^{\hat{f}}_t$
and $\cI^{\hat{g}}_T=\eta \hat{g}(X_T,\sP_{X_T})+\tilde{\cI}^{\hat{g}}_T$,
which is well-defined due to the fact that $\lambda_0\in [0,1)$ satisfies the induction hypothesis.
Observe that
by setting $(\bar{b},\bar{\sigma},\bar{f},\bar{g})=(\hat{b},\sigma,\hat{f},\hat{g})$ in Lemma \ref{lemma:mono_stab},
 we see that there exists a constant $C>0$, independent of $\lambda_0$, such that it holds  for all $\Theta,\Theta'\in \sB$ that
 \begin{align*}
 \begin{split}
 &\|\Xi(\Theta)-\Xi({\Theta}')\|_{\sB}^2
 \\
& \le C\bigg\{\|\eta (\hat{g}({X}_T,\sP_{{X}_T})-\hat{g}({X}'_T,\sP_{{X}'_T}))\|_{L^2}^2
+\|\eta (\hat{b}(\cdot,{X}_\cdot,{Y}_\cdot,\sP_{{(X,Y)}_\cdot})-\hat{b}(\cdot,{X}'_\cdot,{Y}'_\cdot,\sP_{{X',Y'}_\cdot}))
\|_{\cH^2}^2
\\
&\quad
+\|\eta (\sigma(\cdot,{X}_\cdot,\sP_{{X}_\cdot})-{\sigma}(\cdot,{X}'_\cdot,\sP_{{X'}_\cdot}))
\|_{\cH^2}^2
+\|\eta (\hat{f}(\cdot,{\Theta}_\cdot,\sP_{{\Theta}_\cdot})-\hat{f}(\cdot,{\Theta}'_\cdot,\sP_{{\Theta}'_\cdot}))
\|_{\cH^2}^2
\bigg\}
\\
&\le C\eta^2\|\Theta-{\Theta}'\|_{\sB}^2,
\end{split}
  \end{align*}
which shows that $\Xi$ is a contraction when $\eta$ is sufficiently small (independent of $\lambda_0$),
and subsequently leads to  the desired claim due to Banach's fixed point theorem.

For any given $\xi, \xi'\in L^2(\cF_0;\sR^n)$,
the desired stochastic stability of \eqref{eq:mfc_fbsde2} follows directly from 
 Lemma \ref{lemma:mono_stab}
 by setting 
$\lambda=1$,
$(\bar{b},\bar{\sigma},\bar{f},\bar{g})=(\hat{b},\sigma,\hat{f},\hat{g})$,
$(\bar{\cI}^b,\bar{\cI}^\sigma,\bar{\cI}^f)=(\cI^{\hat{b}},\cI^\sigma,\cI^{\hat{f}})=0$,
 $\bar{\cI}^g_T=\cI^{\hat{g}}_T=0$ and
 $\bar{\xi}=\xi'$.
Moreover,  
for any given $\xi\in L^2(\cF_0;\sR^n)$,
 by setting 
$\lambda=1$,
$(\bar{b},\bar{\sigma},\bar{f},\bar{g})=0$,
$(\bar{\cI}^b,\bar{\cI}^\sigma,\bar{\cI}^f)=(\cI^{\hat{b}},\cI^\sigma,\cI^{\hat{f}})=0$,
 $\bar{\cI}^g_T=\cI^{\hat{g}}_T=0$,
 $\bar{\xi}=0$
 and $(\bar{X},\bar{Y},\bar{Z})=0$
 in  Lemma \ref{lemma:mono_stab},
we can deduce the 
 estimate that
 \begin{align*}
 \begin{split}
 &\|X\|_{\cS^2}^2+ \|Y\|_{\cS^2}^2+ \|Z\|_{\cH^2}^2
 \\
& \le C\bigg\{\|\xi\|_{L^2}^2+
| \hat{g}(0,\bm{\delta}_{{0}_{n}})|^2
+\| \hat{b}(\cdot, 0,\bm{\delta}_{{0}_{n+n}})\|_{L^2(0,T)}^2
+\| \sigma(\cdot, 0,\bm{\delta}_{{0}_{n}})\|_{L^2(0,T)}^2
\\
&\quad
+\| \hat{f}(\cdot, 0,\bm{\delta}_{{0}_{n+n+nd}})\|_{L^2(0,T)}^2
\bigg\}
\le C(1+\|\xi\|_{L^2}^2),
\end{split}
\end{align*}
which shows the desired moment bound of the processes $(X,Y,Z)$.
}
%
\end{proof}

{
 We now give our result concerning the H\"{o}lder regularity of the solutions to \eqref{eq:mfc_fbsde2}.}
\begin{Theorem}\label{TH:fbsde_Regularity}
{
Suppose (H.\ref{assum:mfcE}) and (H.\ref{assum:mfcE_hat}(\ref{item:ex})) hold,
and let $(X,Y,Z)\in \cS^2(\sR^n)\t\cS^2(\sR^n)\t \cH^2(\sR^{n\t d})$ be the unique solution to \eqref{eq:mfc_fbsde2} with initial condition $X_0 = \xi_0\in L^2(\cF_0;\sR^n)$.
Moreover, 
for all  $p\ge 2$,
there exists
a constant $C>0$, depending only on $p$ and the data in 
(H.\ref{assum:mfcE}) and (H.\ref{assum:mfcE_hat}(\ref{item:ex})),
such that
$
\|X\|_{\cS^p}+\|Y\|_{\cS^p}+ \|Z\|_{\cS^p}
\le 
 C
\big(1+
\|\xi_0\|_{L^{p}}
\big)$
and 
$ \sE\left[\sup_{s\le r\le t}|X_r-X_s|^p\right]^{1/p}
 +\sE\left[\sup_{s\le r\le t}|Y_r-Y_s|^p\right]^{1/p}
 \le
C
(1+\|\xi_0\|_{L^{p}})
|t-s|^{{1}/{2}}
$ 
for all $0\le s\le t\le T$.}
\end{Theorem}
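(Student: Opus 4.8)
The plan is to combine standard moment and path-regularity estimates for McKean--Vlasov SDEs and BSDEs with a Malliavin-calculus argument that upgrades the $Z$-component to the strong norm $\cS^p$; once the bound $\|Z\|_{\cS^p}\le C(1+\|\xi_0\|_{L^p})$ is available, the $1/2$-H\"older modulus of $Y$ follows immediately from the backward equation. Throughout I would use that, by Theorem \ref{TH:fbsde_wellposedness}, \eqref{eq:mfc_fbsde2} is uniquely solvable with a Lipschitz stability estimate in the initial condition, and that by Proposition \ref{prop:mcfE1} the coefficients $(\hat b,\sigma,\hat f,\hat g)$ are $C$-Lipschitz in the spatial variables with the stated integrability of their values at $0$.

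First I would establish the $\cS^p$-moment bounds. Starting from the $\cS^2\t\cS^2\t\cH^2$ solution of Theorem \ref{TH:fbsde_wellposedness}, I would bootstrap to general $p\ge 2$: take $p$-th moments of $\sup_{r\le t}|X_r|$ in the forward equation \eqref{eq:mfc_fwd_fb2}, apply the Burkholder--Davis--Gundy (BDG) inequality to the stochastic integral, use the $C$-Lipschitz continuity and linear growth of $\hat b(t,\cdot)$ and $\sigma(t,\cdot)$ from Proposition \ref{prop:mcfE1}, and close with Gronwall's inequality to obtain $\|X\|_{\cS^p}\le C(1+\|\xi_0\|_{L^p})$. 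The bound $\|Y\|_{\cS^p}\le C(1+\|\xi_0\|_{L^p})$ then follows from the classical $L^p$ a priori estimate for the BSDE \eqref{eq:mfc_bwd_fb2}, whose driver is Lipschitz in $(y,z)$ and whose terminal data and zero-argument driver are of linear growth (Proposition \ref{prop:mcfE1}), together with the moment bound on $X$ and the frozen laws.

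The delicate point is the bound $\|Z\|_{\cS^p}\le C(1+\|\xi_0\|_{L^p})$, i.e.\ an essential-supremum-in-time bound on $Z$, and this is where I expect the main obstacle to lie. Here I would invoke Malliavin calculus. Theorem \ref{TH:fbsde_wellposedness} provides, via its stability estimate, a decoupling field $u(t,x,\mu)$ that is Lipschitz in $x$ with $Y_t=u(t,X_t,\sP_{X_t})$; differentiating \eqref{eq:mfc_fbsde2} in the Malliavin sense, the triple $(D_rX_\cdot,D_rY_\cdot,D_rZ_\cdot)$ solves a \emph{linear} MV-FBSDE whose coefficients are bounded uniformly in $r$, since the (generalised) derivatives of $(\hat b,\sigma,\hat f,\hat g)$ are bounded by Proposition \ref{prop:mcfE1}. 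Applying the monotonicity and stability machinery (Proposition \ref{prop:mcfE2} and Lemma \ref{lemma:mono_stab}) to this tangent system yields $\sup_{r\le t}\sE[\esssup_{r\le s\le T}|D_rY_s|^p]\le C(1+\|\xi_0\|_{L^p}^p)$, and the identification $Z_t=D_tY_t$ for a.e.\ $t$ then gives $\|Z\|_{\cS^p}\le C(1+\|\xi_0\|_{L^p})$ after combining with the moment bound on $X$; equivalently, one may use the representation $Z_t=\nabla_x u(t,X_t,\sP_{X_t})\sigma(t,X_t,\sP_{X_t})$ and the Lipschitz continuity of $u$ in $x$. The subtlety is that $\hat b$ and $\hat f$ are only Lipschitz --- not $C^1$ --- in the spatial variables, because the feedback map $\hat\a$ in (H.\ref{assum:mfcE_hat}) is merely Lipschitz; hence the Malliavin differentiation has to be carried out through an approximation by smooth coefficients, passing to the limit using the uniform Lipschitz bounds. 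This, together with the presence of the measure-derivative terms, is precisely where the extension of the path-regularity results for decoupled FBSDEs in \cite{zhang2017} to the McKean--Vlasov setting is required.

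Finally, with the $\cS^p$-bounds in hand, the H\"older estimates are routine. For $0\le s\le r\le t\le T$, the forward equation gives $X_r-X_s=\int_s^r\hat b(\tau,X_\tau,Y_\tau,\sP_{(X_\tau,Y_\tau)})\,\d\tau+\int_s^r\sigma(\tau,X_\tau,\sP_{X_\tau})\,\d W_\tau$; taking the supremum over $r\in[s,t]$, the drift term is bounded in $L^p$ by $C(1+\|\xi_0\|_{L^p})|t-s|$ using Proposition \ref{prop:mcfE1} and the $\cS^p$-bound on $X$, and the stochastic integral by $C(1+\|\xi_0\|_{L^p})|t-s|^{1/2}$ via BDG, which gives the claimed $1/2$-H\"older estimate for $X$. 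For $Y$, the backward equation gives $Y_r-Y_s=-\int_s^r\hat f(\tau,\Theta_\tau,\sP_{\Theta_\tau})\,\d\tau+\int_s^rZ_\tau\,\d W_\tau$; the driver term is again $O(|t-s|)$ in $L^p$, while $\sE[\sup_{s\le r\le t}|\int_s^rZ_\tau\,\d W_\tau|^p]^{1/p}\le C\,\sE[(\int_s^t|Z_\tau|^2\,\d\tau)^{p/2}]^{1/p}\le C\,\|Z\|_{\cS^p}\,|t-s|^{1/2}\le C(1+\|\xi_0\|_{L^p})|t-s|^{1/2}$ by BDG and the bound $\int_s^t|Z_\tau|^2\,\d\tau\le|t-s|\,\esssup_\tau|Z_\tau|^2$. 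Combining the estimates for $X$ and $Y$ completes the proof.
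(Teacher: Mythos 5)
Your proposal is essentially correct and, in its second variant for the $Z$-bound, coincides with the paper's argument: the paper first invokes the Lipschitz stability of Theorem \ref{TH:fbsde_wellposedness} to produce (as in \cite[Proposition 5.7]{carmona2015}) a decoupling field $v(t,x)$ with $Y_t=v(t,X_t)$, $|v(t,x)-v(t,x')|\le C|x-x'|$ and $|v(t,0)|\le C(1+\|\xi_0\|_{L^2})$, then \emph{freezes the (deterministic) flows of marginal laws} to rewrite \eqref{eq:mfc_fbsde2} as a genuinely decoupled FBSDE with Lipschitz coefficients, and finally cites the Malliavin-based path-regularity result \cite[Theorem 5.2.2(i)]{zhang2017} for that decoupled system to get $|Z_t|\le C|\sigma(t,X_t,\sP_{X_t})|$ a.e.; the $\cS^p$-bounds and the H\"older estimates then follow exactly as in your last paragraph. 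Your final step, including the inequality $\int_s^t|Z_\tau|^2\,\d\tau\le |t-s|\esssup_\tau|Z_\tau|^2$, is verbatim what the paper does.

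One caveat on your primary route for the $Z$-bound: you propose to Malliavin-differentiate the coupled MV-FBSDE and then apply ``the monotonicity and stability machinery (Proposition \ref{prop:mcfE2} and Lemma \ref{lemma:mono_stab}) to this tangent system.'' Those two results are not generic statements about linear FBSDEs; they exploit the specific Hamiltonian structure of $(\hat b,\sigma,\hat f,\hat g)$ (in particular the convexity inequality \eqref{eq:mfcE_H_convex} and the optimality condition \eqref{eq:opti_markov}), and there is no verification that the linearized coefficients of the tangent system inherit the monotonicity \eqref{eq:monotonicity}. As stated, that step is unjustified. The paper avoids this entirely: since the marginal laws $\sP_{(X_t,Y_t,Z_t)}$ are deterministic functions of time once the solution is fixed, freezing them turns the system into a standard decoupled FBSDE, and no well-posedness or monotonicity argument is needed for any tangent system --- only the (already classical) Malliavin representation for decoupled FBSDEs, suitably extended to merely measurable-in-time coefficients and random $\cF_0$-measurable initial data. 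If you drop the first variant and keep the decoupling-field representation $Z_t=\nabla_x v(t,X_t)\,\sigma(t,X_t,\sP_{X_t})$ (interpreted via \cite{zhang2017}), your proof is complete and matches the paper's.
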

\begin{proof}
Let $\xi_0\in L^2(\cF_0;\sR^n)$ be a given initial condition and 
$(X,Y,Z)\in \cS^2(\sR^n)\t\cS^2(\sR^n)\t \cH^2(\sR^{n\t d})$
be the solution to \eqref{eq:mfc_fbsde2}.
By using the pathwise uniqueness and  the Lipschitz stability  of \eqref{eq:mfc_fbsde2}
in Theorem \ref{TH:fbsde_wellposedness},
we can follow the arguments in \cite[Proposition 5.7]{carmona2015}
and deduce that
there exists  a measurable function $v:[0,T]\t \sR^n\to \sR^n$ (depending on $\xi_0$) 
and a constant $C>0$ (independent of $\xi_0$)
such that 
$\sP(\fa t\in [0,T], Y_t=v(t,X_t))=1$
and it holds for all $t\in [0,T]$ and $x,x'\in \sR^n$ that $|v(t,x)-v(t,x')|\le C|x-x'|$ and $|v(t,0)|\le C(1+\|\xi_0\|_{L^2})$.

By substituting the  relation $Y_t=v(t,X_t)$ into \eqref{eq:mfc_fwd_fb2}, 
we can rewrite \eqref{eq:mfc_fbsde2} into  the following decoupled FBSDE:
\begin{subequations}\label{eq:decoupled_fbsde}
\begin{align}
\d X_t&=\bar{b}(t,X_t)\,\d t +\bar{\sigma} (t,X_t)\, \d W_t, 
\q X_0=\xi_0,
\label{eq:decoupled_sde}
\\
\d Y_t&=-\bar{f}(t,X_t,Y_t,Z_t)\,\d t+Z_t\,\d W_t,
\q
 Y_T=\bar{g}(X_T).
\end{align}
\end{subequations}
with coefficients $\bar{b}, \bar{\sigma}, \bar{f}$ and $\bar{g}$ defined as follows:
\begin{alignat*}{2}
\bar{b}(t,x)&\coloneqq 
\hat{b}(t,x,v(t,x),\sP_{(X_t,Y_t)}),
&&
\q \bar{\sigma}(t,x)\coloneqq \sigma (t,x,\sP_{X_t}),
\\
\bar{f}(t,x,y,z)&\coloneqq
\hat{f}(t,x,y,z, \sP_{(X_t,Y_t,Z_t)}),
&&\q 
\bar{g}(x)\coloneqq \hat{g}(x,\sP_{X_T}).
\end{alignat*}
By Proposition \ref{prop:mcfE1} and Theorem \ref{TH:fbsde_wellposedness},
 these coefficients are $C$-Lipschitz continuous in the state variable
 with a constant $C$ independent of  $\xi_0$,
and satisfy the estimates $\int_0^T|\bar{b}(t,0)|^2\,\d t< \infty$, $\sup_{t\in [0,T]}|\bar{\sigma}(t,0)|< \infty$
 and $\int_0^T|\bar{f}(t,0,0,0)|^2\, \d t<\infty$.
Hence, by applying  \cite[Theorem 5.2.2 (i)]{zhang2017} to \eqref{eq:decoupled_fbsde},
we see there exists a constant $C>0$
such that $|Z_t|\le C|\sigma(t,X_t,\sP_{X_t})|$ $\d \sP\otimes \d t$-a.e.. We remark that in \cite{zhang2017} 
the initial state $\xi_0$ is assumed to be deterministic
and 
the coefficients of the FBSDE are assumed to be H\"{o}lder continuous in time. However, the proof 
relies on expressing the process $Z$ in terms of the Malliavin derivatives  of    $X$ and $Y$,
and hence can be extended to the present setting 
where  $\xi_0$ is $\cF_0$-measurable
and the coefficients are   measurable in time
and satisfy the above estimates.

By  the Lipschitz continuity of $v$, the estimate $\sup_{t\in[0,T]}|v(t,0)|\le C(1+\|\xi_0\|_{L^2})$
and  standard moment estimates of \eqref{eq:decoupled_sde},  $\|X\|_{\cS^p}\le C_{(p)}(1+\|\xi_0\|_{L^p})$,
which
along with the relations $Y_t=v(t,X_t)$ and $|Z_t|\le C|\sigma(t,X_t,\sP_{X_t})|$
leads to $\|Y\|_{\cS^p}+\|Z\|_{\cS^p}\le C_{(p)}(1+\|\xi_0\|_{L^p})$.
Moreover,
by \eqref{eq:mfc_fbsde2}, 
H\"{o}lder's inequality
and the  Burkholder-Davis-Gundy  inequality,
the process $X$ satisfies 
for each $p\ge 2$, $t,s\in [0,T]$,
\begin{align*}
&\sE\left[\sup_{s\le r\le t}|X_r-X_s|^p\right]
\\
&\le \sE\bigg[ \bigg(\int_s^t|\hat{b}(r,X_r,Y_r,\sP_{(X_r,Y_r)})|\, \d r \bigg)^p\bigg] 
+\sE\bigg[\sup_{s\le r\le t}\bigg|\int_s^r \sigma(u,X_u,\sP_{X_u})\,\d W_u\bigg|^p\bigg]
\\
&\le
C_{(p)}\bigg\{
(\|\hat{b}(\cdot,0,0,\bm{\delta}_{{0}_{n+n}})\|^p_{L^2(0,T)}+\|(X,Y)\|^p_{\cS^p})|t-s|^{\frac{p}{2}}
+\sE\bigg[\bigg(\int_s^t |\sigma(r,X_r,\sP_{X_r})|^2\,\d r\bigg)^{\frac{p}{2}}\bigg]
\bigg\}
\\
&\le
C_{(p)}\bigg\{
(\|\hat{b}(\cdot,0,0,\bm{\delta}_{{0}_{n+n}})\|^p_{L^2(0,T)}
+\|\sigma(\cdot,0,\bm{\delta}_{{0}_{n}})\|^p_{L^\infty(0,T)}
+\|(X,Y)\|^p_{\cS^p})|t-s|^{\frac{p}{2}}
\bigg\},
\end{align*}
and the process $Y$ satisfies 
for each $p\ge 2$, $t,s\in [0,T]$,
\begin{align*}
&\sE\left[\sup_{s\le r\le t}|Y_r-Y_s|^p\right]
\\
&\le \sE\bigg[ \bigg(\int_s^t|\hat{f}(r,X_r,Y_r,Z_r,\sP_{(X_r,Y_r,Z_r)})|\, \d r \bigg)^p\bigg] 
+\sE\bigg[\sup_{s\le r\le t}\bigg|\int_s^r Z_u\,\d W_u\bigg|^p\bigg]
\\
&\le
C_{(p)}\bigg\{
(\|\hat{f}(\cdot,0,0,0,\bm{\delta}_{{0}_{n+n+nd}})\|^p_{L^2(0,T)}+\|(X,Y,Z)\|^p_{\cS^p})|t-s|^{\frac{p}{2}}
+\sE\bigg[\bigg(\int_s^t|Z_r|^2\,\d r\bigg)^{\frac{p}{2}}\bigg]
\bigg\},
\end{align*}
which together with  Proposition \ref{prop:mcfE1},
 the inequality that 
$\sE[(\int_s^t|Z_r|^2\,\d r)^{\frac{p}{2}}]
\le \|Z\|^p_{\cS^p} (t-s)^{\frac{p}{2}}$ and
 the estimate of $\|(X,Y,Z)\|_{\cS^p}$
 leads to 
the desired H\"{o}lder continuity of the processes $X$ and $Y$. 
\end{proof}

The following theorem establishes
the $1/2$-H\"{o}lder 
regularity of optimal controls to \eqref{eq:mfcE}
based on the regularity results in Theorem 
\ref{TH:fbsde_Regularity}.

\begin{Theorem}\label{TH:ControlRegularity}
Suppose (H.\ref{assum:mfcE}) and (H.\ref{assum:mfcE_hat}(\ref{item:ex})) hold,
and let   $\xi_0\in L^2(\cF_0;\sR^n)$.
Then
(\ref{eq:mfcE}) admits a unique optimal control $\hat{\a}=(\hat{\a}_t)_{t\in [0,T]}\in \cA$,
which satisfies for all 
$p\ge 2$ that 
$\|\hat{\a}\|_{\cS^p}\le C(1+\|\xi_0\|_{L^p})$.
If we further assume that
(H.\ref{assum:mfcE_hat}(\ref{item:HC})) holds, then the optimal control $\hat{\a}$
satisfies for all 
 $p\ge 2$,
 $0\le s\le t\le T$
that
$ \sE\left[\sup_{s\le r\le t}|\hat{\a}_r-\hat{\a}_s|^p\right]^{1/p}
 \le
C
(1+\|\xi_0\|_{L^{p}})
|t-s|^{{1}/{2}},
$ 
with a constant $C$
depending only on  $p$ and the data in (H.\ref{assum:mfcE}) and (H.\ref{assum:mfcE_hat}).
\end{Theorem}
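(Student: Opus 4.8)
The plan is to leverage Theorems \ref{TH:fbsde_wellposedness} and \ref{TH:fbsde_Regularity} to transfer the regularity of the MV-FBSDE solution to the optimal control. First I would let $(X,Y,Z)\in\cS^2(\sR^n)\t\cS^2(\sR^n)\t\cH^2(\sR^{n\t d})$ be the unique solution to \eqref{eq:mfc_fbsde2} with initial condition $X_0=\xi_0$ furnished by Theorem \ref{TH:fbsde_wellposedness}, and define the candidate control $\hat{\a}_t\coloneqq\hat{\a}(t,X_t,Y_t,\sP_{(X_t,Y_t)})$ for $t\in[0,T]$, where $\hat{\a}$ is the feedback map from (H.\ref{assum:mfcE_hat}(\ref{item:ex})). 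Since $\hat{\a}(t,\cdot)$ is $L_\a$-Lipschitz with $|\hat{\a}(\cdot,0,0,\bm{\delta}_{0_{n+n}})|\le L_\a$, and $\|X\|_{\cS^p}+\|Y\|_{\cS^p}\le C(1+\|\xi_0\|_{L^p})$ by Theorem \ref{TH:fbsde_Regularity}, we immediately get $\hat{\a}\in\cA$ together with the bound $\|\hat{\a}\|_{\cS^p}\le C(1+\|\xi_0\|_{L^p})$; in particular $(X,Y,Z,\hat{\a})$ solves the system \eqref{eq:mfc_fbsde_hat2}.

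Next I would verify that this $\hat{\a}$ is indeed an optimal control of \eqref{eq:mfcE}. By construction, the process $\hat{\a}$ together with $(X,Y,Z)$ satisfies the forward equation \eqref{eq:mfcE_fwd}, the adjoint equation \eqref{eq:mfcE_bsde_nonMarkov}, and—via the pointwise optimality condition \eqref{eq:opti_markov} combined with the identity \eqref{eq:PushF} that $\phi(t,\sP_{(X_t,Y_t)})=\sP_{(X_t,\hat{\a}_t)}$—the optimality condition \eqref{eq:opti_re}, hence \eqref{eq:opti}. Since (H.\ref{assum:mfcE}) makes the stochastic maximum principle a \emph{sufficient} condition (linearity of $b,\sigma$ and convexity of $f,g$), this yields that $\hat{\a}$ is optimal. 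Uniqueness follows from the strict convexity of $J(\cdot;\xi_0)$: the convexity assumptions (H.\ref{assum:mfcE}(\ref{item:mfcE_convex})) with $\lambda_1+\lambda_2>0$ give that $\a\mapsto J(\a;\xi_0)$ is strongly convex in the appropriate sense on $\cA$, so the minimizer is unique; any other optimizer would also solve the Pontryagin system, contradicting the uniqueness in Theorem \ref{TH:fbsde_wellposedness}.

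For the time regularity, assuming now also (H.\ref{assum:mfcE_hat}(\ref{item:HC})), fix $0\le s\le t\le T$ and estimate $|\hat{\a}_r-\hat{\a}_s|$ for $s\le r\le t$ by splitting
\begin{align*}
|\hat{\a}_r-\hat{\a}_s|
&\le |\hat{\a}(r,X_r,Y_r,\sP_{(X_r,Y_r)})-\hat{\a}(s,X_r,Y_r,\sP_{(X_r,Y_r)})|\\
&\quad +|\hat{\a}(s,X_r,Y_r,\sP_{(X_r,Y_r)})-\hat{\a}(s,X_s,Y_s,\sP_{(X_s,Y_s)})|.
\end{align*}
The first term is controlled by (H.\ref{assum:mfcE_hat}(\ref{item:HC})) by $L_\a(1+|X_r|+|Y_r|+\|\sP_{(X_r,Y_r)}\|_2)|r-s|^{1/2}$, and the second by the $L_\a$-Lipschitz continuity in space by $L_\a(|X_r-X_s|+|Y_r-Y_s|+\cW_2(\sP_{(X_r,Y_r)},\sP_{(X_s,Y_s)}))$, with the Wasserstein term bounded by $(\sE[|X_r-X_s|^2+|Y_r-Y_s|^2])^{1/2}$. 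Taking $\sup_{s\le r\le t}$ and then $\|\cdot\|_{L^p}$, applying Minkowski's inequality, the moment bounds $\|X\|_{\cS^p},\|Y\|_{\cS^p}\le C(1+\|\xi_0\|_{L^p})$, and the $1/2$-Hölder path estimates for $X$ and $Y$ from Theorem \ref{TH:fbsde_Regularity}, one obtains $\sE[\sup_{s\le r\le t}|\hat{\a}_r-\hat{\a}_s|^p]^{1/p}\le C(1+\|\xi_0\|_{L^p})|t-s|^{1/2}$, as claimed. The main obstacle in this last statement is essentially bookkeeping: one must be careful that the Wasserstein increments $\cW_2(\sP_{(X_r,Y_r)},\sP_{(X_s,Y_s)})$ are handled by the pathwise couplings (using the same $\omega$) and that the constants remain independent of $\xi_0$ and $s,t$—both of which are already guaranteed by the structure of Theorem \ref{TH:fbsde_Regularity}; the only genuinely substantive input is that theorem itself.
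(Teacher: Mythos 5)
Your proposal is correct and follows essentially the same route as the paper's proof: define $\hat{\a}_t=\hat{\a}(t,X_t,Y_t,\sP_{(X_t,Y_t)})$ from the solution of \eqref{eq:mfc_fbsde2}, read off the $\cS^p$ bound and admissibility from the Lipschitz/local-boundedness of the feedback map and Theorem \ref{TH:fbsde_Regularity}, deduce optimality from the sufficiency of the maximum principle, uniqueness from the strong convexity of $J$ (the paper's Lemma \ref{lemma:J}), and the H\"older estimate from the same time/space splitting combined with the path regularity of $(X,Y)$. No gaps.
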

\begin{proof}
Let 
$\hat{\a}:[0,T]\t \sR^n \t \sR^n \t \cP_2(\sR^{n} \t \sR^{n}) \to \bA$ be
the function in (H.\ref{assum:mfcE_hat}(\ref{item:ex})).
We define
 for each $t\in [0,T]$  that 
$\hat{\a}_t=\hat{\a}(t,X_t,Y_t, \sP_{(X_t,Y_t)})$,
and write $\hat{\a}=(\hat{\a}_t)_{t\in [0,T]}$
with  a slight abuse of notation.
The local boundedness 
and Lipschitz continuity 
of the function $\hat{\a}$ (see (H.\ref{assum:mfcE_hat}(\ref{item:ex}))) show that 
$\|\hat{\a}\|_{\cS^p}\le \|\hat{\a}(\cdot,0,0,\bm{\delta}_{0_{n + n}})\|_{{\cS^p}}+C(\|X\|_{\cS^p}+\|Y\|_{\cS^p})\le C(1+\|\xi_0\|_{L^p})$ for all $p\ge 2$. 
Then, the assumption that  $\xi_0\in L^2(\cF_0;\sR^n)$ and the definition of 
the function
$\hat{\a}$ in (H.\ref{assum:mfcE_hat}(\ref{item:ex}))
imply that the control $\hat{\a}$ is admissible (i.e.,~$\hat{\a}\in \cA$)
and
satisfies 
\eqref{eq:opti_re} (equivalently  \eqref{eq:opti}),
which shows that 
$\hat{\a}$ is an optimal control of \eqref{eq:mfcE}.
The uniqueness of optimal controls of \eqref{eq:mfcE} follows from the strong convexity of 
the cost functional $J:\cA\to \sR$, which will be shown in Lemma \ref{lemma:J}.

Finally, for any given $0\le s\le r\le t\le T$, we obtain from (H.\ref{assum:mfcE_hat}) that
\begin{align*}
&|\hat{\a}_r-\hat{\a}_s|=|\hat{\a}(r,X_r,Y_r,\sP_{(X_r,Y_r)})-\hat{\a}(s,X_s,Y_s,\sP_{(X_s,Y_s)})|
\\
&\le 
C\big\{(1+
|X_r|+|Y_r|+\|\sP_{(X_r,Y_r)}\|_2)|r-s|^{\frac{1}{2}}
+
|X_r-X_s|
\\
&\q +|Y_r-Y_s|+\cW_2(\sP_{(X_r,Y_r)},\sP_{(X_s,Y_s)})
\big\},
\end{align*}
which together with the moment estimates and regularity of the processes $X,Y$ leads to 
\begin{align*}
&\sE\left[\sup_{s\le r\le t}|\hat{\a}_r-\hat{\a}_s|^p\right]^{\frac{1}{p}}
\\
&\le 
C\bigg((1+
\|X\|_{\cS^p}+\|Y\|_{\cS^p})|t-s|^{\frac{1}{2}}
+
\sE\left[\sup_{s\le r\le t}|X_r-X_s|^p\right]^{\frac{1}{p}}
 +\sE\left[\sup_{s\le r\le t}|Y_r-Y_s|^p\right]^{\frac{1}{p}}
\bigg)
\\
&\le 
C(1+\|\xi_0\|_{L^p})|t-s|^{\frac{1}{2}}.
\end{align*}
This completes the proof of Theorem \ref{TH:ControlRegularity}.
\end{proof}

\begin{Remark}\l{rmk:Z_regular}
Note that the 
$1/2$-H\"{o}lder regularity of the optimal open-loop control $\a$ 
in the $\cS^p$-norm
and 
the  dependence on the integrability of the initial condition $\xi_0$
in the estimate are 
optimal, 
since it agrees with the path regularity of Brownian motions.

%
%
%
\end{Remark}

\section{Error estimates of value functions for piecewise constant policy approximations}\l{sec:conv_PCPT}

In this section, 
based on the  regularity results of  optimal controls in Theorem \ref{TH:ControlRegularity},
we  establish   the convergence rate of 
the discrete-time control problem \eqref{eq:mfcE_constant}
in approximating the value function of \eqref{eq:mfcE}.

We start with the 
error introduced by approximating 
the set $\cA$ of admissible controls in \eqref{eq:mfcE} by piecewise constant controls.
More precisely,
let $\pi=\{0=t_0<\cdots<t_N=T\}$ be a 
 partition of $[0,T]$
with  stepsize $|\pi|=\max_{i=0,\ldots, N-1}(t_{i+1}-t_i)$
and let $\cA_{\pi}$ be the   subset of 
piecewise constant 
controls defined as in \eqref{eq:A_pi}. 
For any given initial state $\xi_0\in L^2(\cF_0;\sR^n)$, we consider 
the following minimization problem
\bb\l{eq:mfcE_constant_c}
V_\pi^c(\xi_0)\coloneqq\inf_{\a\in \cA_\pi} J(\a;\xi_0),
\ee
where for each $\a\in \cA_\pi$, 
$J(\a;\xi_0)$ is the cost functional defined as in \eqref{eq:mfcE} with the controlled state process 
$X^\a$ satisfying the MV-SDE \eqref{eq:mfcE_fwd}.

The following theorem 
shows that as the stepsize $|\pi|$ tends to zero,
the value function $V^c_\pi(\xi_0)$ 
converges
 from  above
to the value function $V(\xi_0)$
in \eqref{eq:mfcE}
 with   half-order accuracy. 

\begin{Theorem}\l{thm:PCPT_c_value}
Suppose (H.\ref{assum:mfcE}) and (H.\ref{assum:mfcE_hat}) hold,
let 
the function $V:L^2(\cF_0;\sR^n)\to \sR$ be defined as in \eqref{eq:mfcE},
and for each 
 partition $\pi$ of $[0,T]$
let the function $V_\pi^c:L^2(\cF_0;\sR^n)\to \sR$ be defined as in \eqref{eq:mfcE_constant_c}.
Then there exists a constant $C>0$, such that it holds for all 
$\xi_0\in L^2(\cF_0;\sR^n)$ and for every partition
$\pi$  of $[0,T]$ with  stepsize $|\pi|$ that
\begin{align*}
V(\xi_0)&\le V_\pi^c(\xi_0)\le V(\xi_0)+C(1+\|\xi_0\|^2_{L^{2}})|\pi|^{1/2}.
\end{align*}
\end{Theorem}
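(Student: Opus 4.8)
The plan is to exploit the $1/2$-H\"{o}lder regularity of the optimal control $\hat{\a}$ of \eqref{eq:mfcE} established in Theorem \ref{TH:ControlRegularity}, and to use its piecewise constant interpolation on the grid $\pi$ as a competitor for the minimization problem \eqref{eq:mfcE_constant_c}. The inequality $V(\xi_0)\le V_\pi^c(\xi_0)$ is immediate since $\cA_\pi\subset\cA$ and both value functions minimize the same functional $J(\cdot;\xi_0)$. For the upper bound, let $\hat{\a}$ be the unique optimal control for $V(\xi_0)$ and define $\a^\pi\in\cA_\pi$ by $\a^\pi_s=\sum_{i=0}^{N-1}\hat{\a}_{t_i}\bm{1}_{[t_i,t_{i+1})}(s)$; note $\a^\pi$ is admissible because $\hat{\a}_{t_i}\in L^2(\cF_{t_i};\bA)$ and $\bA$ is closed convex, and $V_\pi^c(\xi_0)\le J(\a^\pi;\xi_0)$. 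It then suffices to show $J(\a^\pi;\xi_0)-J(\hat{\a};\xi_0)\le C(1+\|\xi_0\|^2_{L^2})|\pi|^{1/2}$.

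The core estimate has two ingredients. First, I would control the error between the state process $X^{\a^\pi}$ driven by the piecewise constant control and the optimal state process $X^{\hat{\a}}$. By subtracting the two instances of \eqref{eq:mfcE_fwd}, using the Lipschitz continuity of $b,\sigma$ in all arguments (including the $\cW_2$-distance of the laws, controlled via $L^2$-coupling), the standard It\^{o}/Gronwall argument gives $\|X^{\a^\pi}-X^{\hat{\a}}\|_{\cS^2}^2\le C\,\sE\int_0^T|\a^\pi_t-\hat{\a}_t|^2\,\d t$. The right-hand side is $\sum_i\sE\int_{t_i}^{t_{i+1}}|\hat{\a}_{t_i}-\hat{\a}_t|^2\,\d t$, which by the $1/2$-H\"{o}lder regularity of $\hat{\a}$ in the $\cS^2$-norm from Theorem \ref{TH:ControlRegularity} (with $p=2$) is bounded by $C(1+\|\xi_0\|_{L^2}^2)\sum_i(t_{i+1}-t_i)\cdot|\pi|=C(1+\|\xi_0\|_{L^2}^2)|\pi|$. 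Hence $\|X^{\a^\pi}-X^{\hat{\a}}\|_{\cS^2}\le C(1+\|\xi_0\|_{L^2})|\pi|^{1/2}$, and similarly for the $\cW_2$-distance of the marginal laws. Second, I would bound $|J(\a^\pi;\xi_0)-J(\hat{\a};\xi_0)|$ using the local Lipschitz estimate for $f$ and $g$ recorded in Remark \ref{rmk:mfcE_regularity}: the difference is controlled by the product of a moment factor $(1+\|X^{\a^\pi}\|_{\cS^2}+\|X^{\hat{\a}}\|_{\cS^2}+\|\a^\pi\|_{\cH^2}+\|\hat{\a}\|_{\cH^2}+\cdots)$, which is $O(1+\|\xi_0\|_{L^2})$ by the a priori bounds, and by $(\|X^{\a^\pi}-X^{\hat{\a}}\|_{\cS^2}+\|\a^\pi-\hat{\a}\|_{\cH^2}+\cW_2\text{-terms})$, which is $O((1+\|\xi_0\|_{L^2})|\pi|^{1/2})$. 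Multiplying these yields the claimed $C(1+\|\xi_0\|_{L^2}^2)|\pi|^{1/2}$ bound via Cauchy--Schwarz.

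The main obstacle I anticipate is organizing the Lipschitz/Cauchy--Schwarz bookkeeping for the cost difference cleanly, since $f$ is only \emph{locally} Lipschitz with a linear-growth modulus, so every term must be paired with an $L^2$-moment bound and split via Cauchy--Schwarz to land on the $\|\xi_0\|_{L^2}^2$ prefactor rather than a higher moment; this is where the uniform a priori estimates $\|X^{\hat\a}\|_{\cS^2},\|\hat\a\|_{\cS^2}\le C(1+\|\xi_0\|_{L^2})$ (Theorems \ref{TH:fbsde_Regularity} and \ref{TH:ControlRegularity}) and an analogous $\|X^{\a^\pi}\|_{\cS^2}\le C(1+\|\xi_0\|_{L^2})$ (from standard moment estimates for \eqref{eq:mfcE_fwd} with the bounded-in-$\cS^2$ control $\a^\pi$) are essential. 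A minor technical point is that one needs $\hat{\a}\in\cS^p$ for, say, $p=2$ only, so the regularity assumption (H.\ref{assum:mfcE_hat}(\ref{item:HC})) enters precisely through Theorem \ref{TH:ControlRegularity}; no regularity of the value function or of any feedback map beyond Lipschitzness is used. Note this argument requires only the $1/2$-H\"{o}lder regularity of the \emph{optimal} control, not of arbitrary controls, which is why the error rate is the sharp $1/2$.
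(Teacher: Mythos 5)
Your proposal is correct and follows essentially the same route as the paper's proof: both use the piecewise constant interpolation of the $1/2$-H\"{o}lder-continuous optimal control from Theorem \ref{TH:ControlRegularity} as a competitor, bound the state-process error via MV-SDE stability and Gronwall, and then control the cost difference through the local Lipschitz estimates of Remark \ref{rmk:mfcE_regularity} combined with the \emph{a priori} $\cS^2$-moment bounds and Cauchy--Schwarz. No gaps.
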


\begin{proof}
Throughout this proof,
let $\xi_0\in L^2(\cF_0;\sR^n)$ be a given initial state,
let  $\pi=\{0=t_0<\cdots<t_N=T\}$ be a given
 partition of $[0,T]$
with  stepsize $|\pi|=\max_{i=0,\ldots, N-1}(t_{i+1}-t_i)$,
let $\cA_{\pi}\subset \cA$ be the associated piecewise constant controls
and  $C$ be a generic constant, 
which is independent of the initial state $\xi_0$ and the partition $\pi$, and may take a different value at each occurrence.

It is clear from $\cA_{\pi}\subset \cA$  and
the definitions of $V$ and $V_\pi^c$  that $V_\pi^c(\xi_0)=\inf_{\a\in \cA_\pi} J(\a;\xi_0)\ge \inf_{\a\in \cA} J(\a;\xi_0)=V(\xi_0)$.
We now establish an upper bound of $V_\pi^c(\xi_0)-V(\xi_0)$. Note that
Theorem \ref{TH:ControlRegularity} shows that under 
(H.\ref{assum:mfcE}) and (H.\ref{assum:mfcE_hat}), 
 there exists an admissible control  $\hat{\a}\in \cA$
such that $V(\xi_0)=J(\hat{\a};\xi)$ and 
it holds for all
$0\le s\le t\le T$
that
$ \sE\left[\sup_{s\le r\le t}|\hat{\a}_r-\hat{\a}_s|^2\right]^{1/2}
 \le
C
(1+\|\xi_0\|_{L^{2}})
|t-s|^{{1}/{2}}$.
Let $\hat{\a}^{\pi}$  be a piecewise constant approximation 
of the process $\hat{\a}$ on $\pi$
satisfying for all $t\in [0,T)$ that
$\hat{\a}^\pi_{t}=\sum_{i=0}^{N-1}\hat{\a}_{t_i} \bm{1}_{[t_i,t_{i+1})}(t)$.
Then it is clear that $\hat{\a}^\pi\in \cA_\pi$ and it holds for all $t\in [0,T)$ that 
$t\in [t_i,t_{i+1})$ for some $i\in \{0,\ldots, N-1\}$ and 
$$
\|\hat{\a}^\pi_{t}-\hat{\a}_{t}\|_{L^2}=\|\hat{\a}_{t_i}-\hat{\a}_{t}\|_{L^2}
\le C(1+\|\xi_0\|_{L^{2}})|\pi|^{{1}/{2}}.
$$
Hence we can obtain from standard stability estimates of MV-SDEs  that 
$\|X^{\hat{\a}}-X^{\hat{\a}^\pi}\|^2_{\cS^2}\le C\|\hat{\a}^\pi-\hat{\a}\|^2_{\cH^2}
\le  C(1+\|\xi_0\|^2_{L^{2}})|\pi|$,
which together with Remark \ref{rmk:mfcE_regularity} and $V(\xi_0)=J(\hat{\a};\xi)$,
gives us the estimate that
\begin{align*}
&V_\pi^c(\xi_0)-V(\xi_0)
\le J(\hat{\a}^\pi;\xi_0)-J(\hat{\a};\xi_0)
\\
&\le 
\sE\bigg[
\int_0^T |f(t,X^{\hat{\a}^\pi}_t,{\hat{\a}^\pi}_t,\sP_{(X^{\hat{\a}^\pi}_t,{\hat{\a}^\pi}_t)})-f(t,X^{\hat{\a}}_t,\hat{\a}_t,\sP_{(X^{\hat{\a}}_t,\hat{\a}_t)})|\, \d t
\\
&\q +
|g(X^{\hat{\a}^\pi}_T,\sP_{X^{\hat{\a}^\pi}_T})-g(X^{\hat{\a}}_T,\sP_{X^{\hat{\a}}_T})|
\bigg]
\\
&\le
C\bigg\{\sE
\bigg[
\int_0^T
\big(1+|X^{\hat{\a}}_t|+\|\sP_{(X^{\hat{\a}}_t,{\hat{\a}}_t)}\|_2+|\hat{\a}_t|+|X^{\hat{\a}^\pi}_t|+\|\sP_{(X^{\hat{\a}^\pi}_t,{\hat{\a}^\pi}_t)}\|_2+|{\hat{\a}^\pi}_t|\big)
\\
&\q 
\times
\big(|X^{\hat{\a}}_t-X^{\hat{\a}^\pi}_t|+|\hat{\a}_t-{\hat{\a}^\pi}_t|+\cW_2(\sP_{(X^{\hat{\a}}_t,{\hat{\a}}_t)},\sP_{(X^{\hat{\a}^\pi}_t,{\hat{\a}^\pi}_t)})
\big)
\, \d t
\\
&\q 
+\big(1+|X^{\hat{\a}}_T|+\|\sP_{X^{\hat{\a}}_T}\|_2+|X^{\hat{\a}^\pi}_T|+\|\sP_{X^{\hat{\a}^\pi}_T}\|_2
\big)
\big(|X^{\hat{\a}}_T-X^{\hat{\a}^\pi}_T|+\cW_2(\sP_{X^{\hat{\a}}_T},\sP_{X^{\hat{\a}^\pi}_T})
\big)
\bigg]\bigg\}.
\end{align*}
Then, we can deduce from the above estimate  and the Cauchy-Schwarz inequality that
\begin{align*}
\begin{split}
V_\pi^c(\xi_0)-V(\xi_0)
&\le
C\big\{
\big(1+\|(X^{\hat{\a}},\hat{\a},X^{\hat{\a}^\pi},\hat{\a}^\pi)\|_{\cH^2}\big)
\big(\|X^{\hat{\a}}-X^{\hat{\a}^\pi}\|_{\cH^2}+\|\hat{\a}-\hat{\a}^\pi\|_{\cH^2}
\big)
\\
&\q 
+\big(1+\|X^{\hat{\a}}_T\|_{L^2}+\|X^{\hat{\a}^\pi}_T\|_{L^2}
\big)
\|X^{\hat{\a}}_T-X^{\hat{\a}^\pi}_T\|_{L^2}
\big\}
\\
&\le 
C
(1+\|\xi_0\|^2_{L^{2}})|\pi|^{1/2},
\end{split}
\end{align*}
which completes the desired error estimate.
\end{proof}

In practice, instead of solving  \eqref{eq:mfcE_fwd} with a piecewise constant control,
one can further discretize the controlled dynamics in time
by the  Euler-Maruyama  scheme  (cf.~\eqref{eq:mfcE_constant}),
which allows us to 
 only deal with Gaussian random variables
with known mean and variance.
%
To quantify the  time discretization error of the controlled dynamics and the running cost, 
we assume the following time regularity of the coefficients:
\begin{Assumption}\l{assum:mfc_Holder_t}
Assume  the notation of (H.\ref{assum:mfcE}). 
The functions $b_0,b_1,b_2,b_3,\sigma_0,\sigma_1,\sigma_2$ 
in (H.\ref{assum:mfcE}(\ref{item:mfcE_lin}))
are $1/2$-H\"{o}lder continuous,
and 
there exists a constant $\hat{K}\in [0,\infty)$ satisfying
 for all $t,t'\in [0,T]$, $(x,a,\eta)\in \sR^n\t  \bA\t \cP_2(\sR^n\t \sR^k)$
that 
$|f(t,x,a,\eta)- f(t',x,a,\eta)|\le \hat{K}(1+|x|^2+|a|^2+\|\eta\|^2_2)|t-t'|^{1/2}$.

\end{Assumption}
\begin{Remark}\l{rmk:local_holder_t}
(H.\ref{assum:mfcE}(\ref{item:mfcE_lin})) and  (H.\ref{assum:mfc_Holder_t}) imply 
for all $(x,a,\mu,\eta), (x',a',\mu',\eta')\in \sR^k\t \bA\t \cP_2(\sR^n)\t\cP_2(\sR^n\t \sR^k)$,
\begin{align*}
&|b(r,x,a,\eta)-b(s,x',a',\eta')|
\\
&\le C
\Big((1+|x|+|a|+\|\eta\|_2)|r-s|^{1/2}+|x-x'|+|a-a'|+\cW_2(\eta,\eta')\Big),
\\
&|\sigma(r,x,\mu)-\sigma(s,x',\mu')|
\le C\Big((1+|x|+\|\mu\|_2)|r-s|^{1/2}+|x-x'|+\cW_2(\mu,\mu')\Big).
\end{align*}
\end{Remark}

Under the  H\"{o}lder regularity of the coefficients,
we shall prove that the value function 
 $V_\pi(\xi_0)$ 
converges to the value function $V(\xi_0)$
in \eqref{eq:mfcE}
 with  order $1/2$ as the stepsize $|\pi|$ tends to zero,
 which is  optimal
 for   MFC problems with such irregular running costs $f$.

Note that
a similar convergence rate has been established in 
\cite[Proposition 12]{carmona2019}
for the special case
 where both $b$ and $f$ are independent of the law of controls.
By restricting the analysis to closed-loop (also called Markovian) controls
(i.e., $\a\in \cA$ that are of the form $\a_t=\phi(t,X_t)$ with $\phi\in C^{1,2}_b([0,T]\t\sR^n)$),
and assuming
the decoupling field of \eqref{eq:mfc_fbsde2} and the function $\hat{\a}$ in (H.\ref{assum:mfcE_hat}(\ref{item:ex}))
to be  twice differentiable   with uniformly Lipschitz continuous derivatives in $(t,x,y,\mu)\in [0,T]\t \sR^n\t \sR^n\t \cP_2(\sR^n)$,
the authors establish an order $1/2$ convergence of $(V_\pi(\xi_0))_{\pi}$ in terms of $|\pi|$,
with a constant depending on the sup-norms of the second-order derivatives of the feedback map $\phi$ and coefficients.
These conditions typically require the  cost functions $f$ and $g$ in \eqref{eq:mfcE} 
to be three-times differentiable in $(x,a,\mu)$ with bounded and Lipschitz continuous derivatives.

Here we remove these strong regularity assumptions and establish an order $1/2$ convergence   with general 
open-loop  strategies and 
cost functions
that are merely H\"{o}lder continuous in time and 
  Lipschitz continuously differentiable in space;
see 
Example \ref{example:mfc}
 for precise regularity assumptions
  to ensure  (H.\ref{assum:mfcE_hat})
   in the setting of
 MFC problems.

\begin{Theorem}\l{thm:PCPT_discrete_value}
Suppose (H.\ref{assum:mfcE}), (H.\ref{assum:mfcE_hat})
and (H.\ref{assum:mfc_Holder_t}) hold,
let 
the function $V:L^2(\cF_0;\sR^n)\to \sR$ be defined as in \eqref{eq:mfcE},
and for each 
 partition $\pi$ of $[0,T]$
let the function $V_\pi:L^2(\cF_0;\sR^n)\to \sR$ be defined as in \eqref{eq:mfcE_constant}.
Then there exists a constant $C>0$, such that it holds for all 
$\xi_0\in L^2(\cF_0;\sR^n)$ and for every partition
$\pi$  of $[0,T]$ with  stepsize $|\pi|$ that
$
V_\pi(\xi_0)- V(\xi_0)\le C(1+\|\xi_0\|^2_{L^{2}})|\pi|^{1/2}.
$

If we further assume that $\bA$ is a compact subset of $\sR^k$,
then 
it holds for all 
$\xi_0\in L^2(\cF_0;\sR^n)$ and for every partition
$\pi$  of $[0,T]$ with  stepsize $|\pi|$ that
$
|V_\pi(\xi_0)- V(\xi_0)|\le C(1+\|\xi_0\|^2_{L^{2}})|\pi|^{1/2}.
$
\end{Theorem}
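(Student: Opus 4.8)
The plan is to compare the fully discrete value function $V_\pi$ with the intermediate value function $V^c_\pi$ of \eqref{eq:mfcE_constant_c} (which already uses piecewise constant controls but the exact dynamics \eqref{eq:mfcE_fwd}), and then invoke Theorem \ref{thm:PCPT_c_value} and its proof. The bridge is a quantitative comparison, for a \emph{fixed} $\a\in\cA_\pi$, between the continuous controlled state $X^\a$ solving \eqref{eq:mfcE_fwd} and its Euler--Maruyama approximation $X^{\a,\pi}$ from \eqref{eq:mfcE_fwd_euler}, and a corresponding comparison of the cost functionals $J(\a;\xi_0)$ and $J_\pi(\a;\xi_0)$. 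Throughout, $\lfloor t\rfloor_\pi\coloneqq t_i$ for $t\in[t_i,t_{i+1})$.

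\emph{Step 1 (state error).} Using the affine structure and the boundedness of $b_1,b_2,b_3,\sigma_1,\sigma_2$ in (H.\ref{assum:mfcE}(\ref{item:mfcE_lin})), the $1/2$-H\"older regularity in time of $b_0,\dots,\sigma_2$ from (H.\ref{assum:mfc_Holder_t}), and Lipschitz stability of McKean--Vlasov SDEs, I would establish the standard strong order $1/2$ estimate for the Euler--Maruyama scheme: there is $C>0$, independent of $\pi$ and $\a$, such that for all $\a\in\cA_\pi$,
\[
\max_{0\le i\le N}\sE\big[|X^\a_{t_i}-X^{\a,\pi}_{t_i}|^2\big]
+\sup_{t\in[0,T]}\sE\big[|X^\a_t-X^{\a,\pi}_{\lfloor t\rfloor_\pi}|^2\big]
\le C\big(1+\|\xi_0\|_{L^2}^2+\|\a\|_{\cH^2}^2\big)\,|\pi| .
\]
This combines the moment bound $\|X^\a\|_{\cS^2}^2\le C(1+\|\xi_0\|_{L^2}^2+\|\a\|_{\cH^2}^2)$, the $1/2$-H\"older path regularity $\sup_i\sE[\sup_{t\in[t_i,t_{i+1})}|X^\a_t-X^\a_{t_i}|^2]\le C(1+\|\xi_0\|_{L^2}^2+\|\a\|_{\cH^2}^2)|\pi|$ of the It\^o process $X^\a$, and a discrete Gr\"onwall argument for the scheme; the Wasserstein terms in the drift and diffusion are handled exactly as in the non-mean-field case since $X^{\a,\pi}$ is defined with the true discrete laws.

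\emph{Step 2 (cost error).} Write $J_\pi(\a;\xi_0)-J(\a;\xi_0)$ as a sum of running-cost increments $f(t_i,X^{\a,\pi}_{t_i},\a_{t_i},\sP_{(X^{\a,\pi}_{t_i},\a_{t_i})})-f(t,X^\a_t,\a_t,\sP_{(X^\a_t,\a_t)})$ over $t\in[t_i,t_{i+1})$ plus the terminal term $g(X^{\a,\pi}_T,\sP_{X^{\a,\pi}_T})-g(X^\a_T,\sP_{X^\a_T})$. Since $\a$ is constant on $[t_i,t_{i+1})$, the control argument is frozen, so each running-cost increment is controlled by: the time-H\"older bound of $f$ in (H.\ref{assum:mfc_Holder_t}), contributing $\hat K(1+|X^\a_t|^2+|\a_t|^2+\|\sP_{(X^\a_t,\a_t)}\|_2^2)|\pi|^{1/2}$; and the local Lipschitz continuity of $f$ in $(x,\eta)$ from Remark \ref{rmk:mfcE_regularity}, which, using $\cW_2(\sP_{(X^\a_t,\a_t)},\sP_{(X^{\a,\pi}_{t_i},\a_t)})\le\|X^\a_t-X^{\a,\pi}_{t_i}\|_{L^2}$ and $|X^\a_t-X^{\a,\pi}_{t_i}|\le|X^\a_t-X^\a_{t_i}|+|X^\a_{t_i}-X^{\a,\pi}_{t_i}|$, reduces entirely to the quantities bounded in Step 1. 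An analogous estimate for the terminal term follows from the local Lipschitz continuity of $g$. Taking expectations, applying Cauchy--Schwarz, and combining with Step 1 and the $\cS^2$-moment bounds yields
\[
|J_\pi(\a;\xi_0)-J(\a;\xi_0)|\le C\big(1+\|\xi_0\|_{L^2}^2+\|\a\|_{\cH^2}^2\big)\,|\pi|^{1/2}
\qquad\text{for every }\a\in\cA_\pi .
\]

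\emph{Step 3 (conclusion) and main obstacle.} For the first (always valid) bound, apply Step 2 to $\a=\hat{\a}^{\pi}$, the piecewise constant interpolation on $\pi$ of the optimal control $\hat\a$ of \eqref{eq:mfcE} from Theorem \ref{TH:ControlRegularity}; since $\|\hat{\a}^{\pi}\|_{\cH^2}\le\|\hat\a\|_{\cH^2}\le C(1+\|\xi_0\|_{L^2})$, one gets $V_\pi(\xi_0)\le J_\pi(\hat{\a}^{\pi};\xi_0)\le J(\hat{\a}^{\pi};\xi_0)+C(1+\|\xi_0\|_{L^2}^2)|\pi|^{1/2}$, and the proof of Theorem \ref{thm:PCPT_c_value} gives $J(\hat{\a}^{\pi};\xi_0)-V(\xi_0)=J(\hat{\a}^{\pi};\xi_0)-J(\hat\a;\xi_0)\le C(1+\|\xi_0\|_{L^2}^2)|\pi|^{1/2}$. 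For the two-sided bound under compact $\bA$, set $M\coloneqq\sup_{a\in\bA}|a|<\infty$, so $\cA_\pi\subset\cA$ with $\|\a\|_{\cH^2}\le M\sqrt T$ \emph{uniformly} over $\a\in\cA_\pi$; given $\eps>0$ pick $\a^\pi\in\cA_\pi$ with $J_\pi(\a^\pi;\xi_0)\le V_\pi(\xi_0)+\eps$, then $V(\xi_0)\le J(\a^\pi;\xi_0)\le J_\pi(\a^\pi;\xi_0)+C(1+\|\xi_0\|_{L^2}^2)|\pi|^{1/2}\le V_\pi(\xi_0)+\eps+C(1+\|\xi_0\|_{L^2}^2)|\pi|^{1/2}$ by Step 2, and letting $\eps\downarrow 0$ gives $V(\xi_0)-V_\pi(\xi_0)\le C(1+\|\xi_0\|_{L^2}^2)|\pi|^{1/2}$, which together with the first bound yields the claim. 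The main obstacle is making the constant in Steps 1--2 \emph{uniform over all} $\a\in\cA_\pi$: this is harmless for the first bound, where only the distinguished control $\hat{\a}^{\pi}$ with its known moment bounds is used, but is exactly what forces the compactness hypothesis on $\bA$ in the two-sided statement (to control $\|\a^\pi\|_{\cH^2}$ and hence $\|X^{\a^\pi}\|_{\cS^2}$ independently of the near-optimizer); a secondary point is tracking the linear-in-$\|\xi_0\|_{L^2}^2$ dependence through the Wasserstein/law-dependent terms of the coefficients and of the discretized running cost.
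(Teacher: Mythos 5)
Your proposal is correct and follows essentially the same route as the paper: the upper bound on $V_\pi-V$ comes from the piecewise constant interpolation $\hat{\a}^\pi$ of the H\"older-continuous optimal control together with an order-$1/2$ Euler/time-discretization estimate, and the reverse bound uses compactness of $\bA$ to make the comparison of $J$ and $J_\pi$ uniform over $\cA_\pi$. The only (cosmetic) difference is bookkeeping: you split $J_\pi(\hat{\a}^\pi)-J(\hat{\a})$ through the intermediate term $J(\hat{\a}^\pi)$ and reuse Theorem \ref{thm:PCPT_c_value}, whereas the paper estimates that difference directly via the residuals $R_1,R_2$.
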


\begin{proof}
Throughout this proof,
let $\xi_0\in L^2(\cF_0;\sR^n)$ be a given initial state,
let  $\pi=\{0=t_0<\cdots<t_N=T\}$ be a given
 partition of $[0,T]$
with  stepsize $|\pi|=\max_{i=0,\ldots, N-1}(t_{i+1}-t_i)$,
let $\cA_{\pi}\subset \cA$ be the associated piecewise constant controls
and  
let $C$ be a generic constant, 
which is independent of the initial state $\xi_0$, the partition $\pi$
and controls $\a\in \cA$, and may take a different value at each occurrence.

\textbf{Step 1: Estimate an upper bound of $V_\pi(\xi_0)-V(\xi_0)$.}
As in the proof of Theorem \ref{thm:PCPT_c_value},
let  $\hat{\a}\in \cA$ be an optimal control of \eqref{eq:mfcE}
satisfying 
that
$ \|\hat{\a}\|_{\cS^2}^2
 \le
C
(1+\|\xi_0\|^2_{L^{2}})$
and 
$ \|\hat{\a}_t-\hat{\a}_s\|_{L^2}^2
 \le
C
(1+\|\xi_0\|^2_{L^{2}})
|t-s|$
for all $ s,t\in [0,T]$,
let
${X}^{\hat{\a}}$ be the solution to \eqref{eq:mfcE_fwd}  with the control ${\hat{\a}}$
satisfying
 $ \|X^{\hat{\a}}\|_{\cS^2}^2
 \le
C
(1+\|\xi_0\|^2_{L^{2}})$
and 
$
\|X^{\hat{\a}}_t-X^{\hat{\a}}_s\|_{L^2}^2
  \le
C
(1+\|\xi_0\|^2_{L^{2}})
|t-s|$
for all
$ s,t\in [0,T]$
(see Theorem \ref{TH:ControlRegularity}),
let  $\hat{\a}^{\pi}$  be a piecewise constant approximation 
of the process $\hat{\a}$ on $\pi$
satisfying for all $t\in [0,T)$ that
$\|\hat{\a}^\pi_{t}-\hat{\a}_{t}\|_{L^2}=\|\hat{\a}_{t_i}-\hat{\a}_{t}\|_{L^2}
\le C(1+\|\xi_0\|_{L^{2}})|\pi|^{{1}/{2}}$,
and 
let
$\hat{X}^\pi$ be the solution to \eqref{eq:mfcE_fwd_euler}  with the control $\hat{\a}^\pi$.
Note that
it is standard to show 
by using the Lipschitz continuity of  $(b,\sigma)$
and Gronwall's inequality that
$$ 
\max_{t_i\in \pi}\|\hat{X}^{\pi}_{t_i}\|_{L^2}^2
 \le
C
\big(1+\|\xi_0\|^2_{L^{2}}+\max_{t_i\in \pi}\|\hat{\a}^\pi_{t_i}\|^2_{L^2}\big)
\le 
C(1+\|\xi_0\|^2_{L^{2}}).
$$

Observe that 
it holds  for each $i\in \{0,\ldots,N-1\}$ that
\begin{align}\l{eq:X^hat_a-X^pi}
\begin{split}
&\sE[|X^{\hat{\a}}_{t_{i+1}}-\hat{X}^{\pi}_{t_{i+1}}|^2]
\\
&
\le C\bigg\{
\sE\bigg[\bigg|\sum_{j=0}^{ i}\int_{t_j}^{t_{j+1}}
\big(b(t,X^{\hat{\a}}_t,\hat{\a}_t,\sP_{(X^{\hat{\a}}_t,\hat{\a}_t)})-
b(t_j,\hat{X}^{\pi}_{t_j},\hat{\a}^\pi_{t_j},\sP_{(\hat{X}^{\pi}_{t_j},\hat{\a}^\pi_{t_j})})\big)
\,\d t\bigg|^2\bigg]
\\
&\q 
+\sE\bigg[\sum_{j=0}^{ i}\int_{t_j}^{t_{j+1}}
|\sigma(t,X^{\hat{\a}}_t,\sP_{X^{\hat{\a}}_t})-
\sigma(t_j,\hat{X}^{\pi}_{t_j},\sP_{\hat{X}^{\pi}_{t_j}})|^2
\,\d t\bigg]
\bigg\}
\\
&\le
 C\bigg\{
R_1+
\sE\bigg[\bigg(
\sum_{j=0}^{ i}\int_{t_j}^{t_{j+1}}
\big|b({t_j},X^{\hat{\a}}_{t_j},\hat{\a}_{t_j},\sP_{(X^{\hat{\a}}_{t_j},\hat{\a}_{t_j})})-
b(t_j,\hat{X}^{\pi}_{t_j},\hat{\a}^\pi_{t_j},\sP_{(\hat{X}^{\pi}_{t_j},\hat{\a}^\pi_{t_j})})\big|
\,\d t\bigg)^2\bigg]
\\
&\q 
+\sE\bigg[\sum_{j=0}^{ i}\int_{t_j}^{t_{j+1}}
|\sigma(t_j,X^{\hat{\a}}_{t_j},\sP_{X^{\hat{\a}}_{t_j}})-
\sigma(t_j,\hat{X}^{\pi}_{t_j},\sP_{\hat{X}^{\pi}_{t_j}})|^2
\,\d t\bigg]
\bigg\},
\end{split}
\end{align}
with the residual term $R_1$ defined as 
\begin{align}\l{eq:R1}
\begin{split}
R_1
&\coloneqq
 \sE\bigg[\bigg(\sum_{i=0}^{N-1}\int_{t_i}^{t_{i+1}}
\big|b(t,X^{\hat{\a}}_t,\hat{\a}_t,\sP_{(X^{\hat{\a}}_t,\hat{\a}_t)})-
b(t_i,X^{\hat{\a}}_{t_i},\hat{\a}_{t_i},\sP_{(X^{\hat{\a}}_{t_i},\hat{\a}_{t_i})})\big|
\,\d t\bigg)^2\bigg]
\\
&\q 
+\sE\bigg[\sum_{i=0}^{ N-1}\int_{t_i}^{t_{i+1}}
|\sigma(t,X^{\hat{\a}}_t,\sP_{X^{\hat{\a}}_t})-
\sigma(t_i,X^{\hat{\a}}_{t_i},\sP_{X^{\hat{\a}}_{t_i}})|^2
\,\d t\bigg]
\\
&\le
 T
 \sE\bigg[\sum_{i=0}^{N-1}\int_{t_i}^{t_{i+1}}
\big|b(t,X^{\hat{\a}}_t,\hat{\a}_t,\sP_{(X^{\hat{\a}}_t,\hat{\a}_t)})-
b(t_i,X^{\hat{\a}}_{t_i},\hat{\a}_{t_i},\sP_{(X^{\hat{\a}}_{t_i},\hat{\a}_{t_i})})\big|^2
\,\d t\bigg]
\\
&\q 
+\sE\bigg[\sum_{i=0}^{ N-1}\int_{t_i}^{t_{i+1}}
|\sigma(t,X^{\hat{\a}}_t,\sP_{X^{\hat{\a}}_t})-
\sigma(t_i,X^{\hat{\a}}_{t_i},\sP_{X^{\hat{\a}}_{t_i}})|^2
\,\d t\bigg],
\end{split}
\end{align}
where we have applied the Cauchy-Schwarz inequality for  the last inequality.
Hence, by applying the Lipschitz continuity of $b$ and $\sigma$ and Gronwall's inequality, we can deduce
the estimate that
\begin{align}\l{eq:X_hat-X_a,pi}
\begin{split}
\max_{t_i\in \pi}\sE[|X^{\hat{\a}}_{t_i}-\hat{X}^{\pi}_{t_i}|^2]
&\le
 C\Big(
R_1+\max_{t_i\in \pi}\|\hat{\a}_{t_i}-\hat{\a}^\pi_{t_i}\|_{L^2}^2
\Big),
\end{split}
\end{align}
which, together with  the definition of $V_\pi(\xi_0)$ and 
the optimality of 
$\hat{\a}$  for \eqref{eq:mfcE},
gives that
\begin{align*}
&V_\pi(\xi_0)-V(\xi_0)
\le J_\pi(\hat{\a}^\pi;\xi_0)-J(\hat{\a};\xi_0)
\\
&\le\sE\bigg[\sum_{i=0}^{N-1}
\int_{t_i}^{t_{i+1}}
\big|
 f(t_i,\hat{X}^{\pi}_{t_i},\hat{\a}^\pi_{t_i},\sP_{(\hat{X}^{\pi}_{t_i},\hat{\a}^\pi_{t_i})})
 -f(t,X^{\hat{\a}}_t,\hat{\a}_t,\sP_{(X^{\hat{\a}}_t,\hat{\a}_t)})
 \big|\,\d t
\\
&\q +
|g(\hat{X}^{\pi}_T,\sP_{\hat{X}^{\pi}_T})-g(X^{\hat{\a}}_T,\sP_{X^{\hat{\a}}_T})|
\bigg]
\\
&\le
C\bigg(R_2+\sE\bigg[\sum_{i=0}^{N-1}
\int_{t_i}^{t_{i+1}}
\big|
f(t_i,\hat{X}^{\pi}_{t_i},\hat{\a}^\pi_{t_i},\sP_{(\hat{X}^{\pi}_{t_i},\hat{\a}^\pi_{t_i})})
-f({t_i},X^{\hat{\a}}_{t_i},\hat{\a}_{t_i},\sP_{(X^{\hat{\a}}_{t_i},\hat{\a}_{t_i})})
 \big|\,\d t
\\
& \q +
  |g(\hat{X}^{\pi}_T,\sP_{\hat{X}^{\pi}_T})-g(X^{\hat{\a}}_T,\sP_{X^{\hat{\a}}_T})|
\bigg]
\bigg)
\end{align*}
with the residual term defined by 
\begin{align}\l{eq:R2}
\begin{split}
R_2
&\coloneqq
\sE\bigg[\sum_{i=0}^{N-1}
\int_{t_i}^{t_{i+1}}
\big|
 f(t,X^{\hat{\a}}_t,\hat{\a}_t,\sP_{(X^{\hat{\a}}_t,\hat{\a}_t)})
 -f({t_i},X^{\hat{\a}}_{t_i},\hat{\a}_{t_i},\sP_{(X^{\hat{\a}}_{t_i},\hat{\a}_{t_i})})
 \big|\,\d t
\bigg].
\end{split}
\end{align}
Then, by using  Remark  \ref{rmk:mfcE_regularity},
 the Cauchy-Schwarz inequality,
\eqref{eq:X_hat-X_a,pi}
and  the fact that $\|\hat{\a}^\pi_{t}-\hat{\a}_{t}\|_{L^2}\le C(1+\|\xi_0\|_{L^{2}})|\pi|^{{1}/{2}}$, we can deduce  that
\begin{align}\l{eq:V_pi-V}
\begin{split}
&V_\pi(\xi_0)-V(\xi_0)
\\
&\le
C\bigg(R_2+
\max_{t_i\in \pi}\big(1+\|(X^{\hat{\a}}_{t_i},\hat{\a}_{t_i},\hat{X}^{\pi}_{t_i},\hat{\a}^\pi_{t_i})\|_{L^2}\big)
\big(\|X^{\hat{\a}}_{t_i}-\hat{X}^{\pi}_{t_i}\|_{L^2}+\|\hat{\a}_{t_i}-\hat{\a}^\pi_{t_i}\|_{L^2}\big)
\bigg)
\\
&\le
C\Big(R_2+
(1+\|\xi_0\|_{L^{2}})
\big(
R^{1/2}_1+(1+\|\xi_0\|_{L^{2}})|\pi|^{{1}/{2}}
\big)
\Big).
\end{split}
\end{align}
Hence it remains to estimate the residual terms $R_1$ and $R_2$ defined as in \eqref{eq:R1} and \eqref{eq:R2}, respectively.
Note that 
Remark \ref{rmk:local_holder_t}
and the H\"{o}lder regularity of $(X^{\hat{\a}},\hat{\a})$
imply that
\begin{align*}
R_1
&\le
C
\bigg((1+\|X^{\hat{\a}}\|^2_{\cS^2}+\|\hat{\a}\|^2_{\cS^2})|\pi|+
\sup_{t_i\in \pi,r\in [t_i,t_{i+1})}
\Big(
\|X^{\hat{\a}}_r-X^{\hat{\a}}_{t_i}\|_{L^2}^2
+\|{\hat{\a}}_r-{\hat{\a}}_{t_i}\|_{L^2}^2
\Big)
\bigg)
\\
&\le
C(1+\|\xi_0\|^2_{L^2})|\pi|,
\end{align*}
while  Remark \ref{rmk:mfcE_regularity}
and 
(H.\ref{assum:mfc_Holder_t}) 
give us that
\begin{align*}
R_2
&\le
C\bigg[
(1+\|X^{\hat{\a}}\|^2_{\cS^2}+\|\hat{\a}\|^2_{\cS^2})|\pi|^{1/2}
\\
&\q 
+
(1+\|X^{\hat{\a}}\|_{\cS^2}+\|\hat{\a}\|_{\cS^2})
\sup_{t_i\in \pi,r\in [t_i,t_{i+1})}
\Big(
\|X^{\hat{\a}}_r-X^{\hat{\a}}_{t_i}\|_{L^2}
+\|{\hat{\a}}_r-{\hat{\a}}_{t_i}\|_{L^2}
\Big)
\bigg]
\\
&\le 
C(1+\|\xi_0\|^2_{L^2})|\pi|^{1/2}.
\end{align*}
These estimates enable us to  conclude from \eqref{eq:V_pi-V} 
the upper bound
that $V_\pi(\xi_0)-V(\xi_0)\le C(1+\|\xi_0\|^2_{L^2})|\pi|^{1/2}$.

\textbf{Step 2: Estimate an upper bound of $V(\xi_0)-V_\pi(\xi_0)$.}
Note that the additional compactness assumption of $\bA$ implies that there exists $C>0$ such that
$\|\a\|_{\cH^2}\le C$ for all $\a\in \cA_\pi$.
Then  standard moment estimates for MV-SDEs (see e.g.~\cite[Theorem 3.3]{reis2019})
shows that there exists $C>0$ such that for all $\a\in \cA_\pi$, the solution to 
\eqref{eq:mfcE_fwd} with the control $\a$ satisfies 
$\|X^\a\|_{\cS^2}\le C(1+\|\xi_0\|_{L^2})$.
Moreover, for any $0\le s\le r\le t\le T$, we can obtain 
from  the  Burkholder-Davis-Gundy  inequality,
H\"{o}lder's inequality
and (H.\ref{assum:mfcE}(\ref{item:mfcE_lin}))
 that
\begin{align}\l{eq:X^a_Holder_discrete}
\begin{split}
&\sE\left[\sup_{s\le r\le t}|X^\a_r-X^\a_s|^2\right]
\\
&\le 2
\sE\bigg[\bigg(
\int_s^t
|b(u,X^\a_u,\a_u,\sP_{(X^\a_u,\a_u)})|^2\, \d u\bigg)(t-s)
+
\int_s^t
|\sigma(u,X^\a_u,\sP_{X^\a_u})|^2\, \d u
\bigg]
\\
&\le C
(\|b_0\|^2_{L^2(0,T)}+\|\sigma_0\|^2_{L^\infty(0,T)}+\|X^\a\|^2_{\cS^2}+\|\a\|^2_{\cH^2})(t-s)
\\
&\le C(1+\|\xi_0\|^2_{L^2})(t-s).
\end{split}
\end{align}
Similarly,
for each $\a\in \cA_\pi$, 
by using the Lipschitz continuity of the coefficients $b,\sigma$
and Gronwall's inequality,
one can show the corresponding solution $X^{\a,\pi}$ to \eqref{eq:mfcE_fwd_euler}
(with control $\a$) satisfies the following moment estimate:
\begin{equation}\label{eq:apriori_1}
\max_{t_i\in \pi}\|{X}^{\a,\pi}_{t_i}\|_{L^2}^2
 \le
C
\big(1+\|\xi_0\|^2_{L^{2}}+\max_{t_i\in \pi}\|{\a}_{t_i}\|^2_{L^2}\big)
\le 
C(1+\|\xi_0\|^2_{L^{2}}).
\end{equation}
Let $\a\in \cA_\pi$ be fixed,
and let $X^\a$ and ${X}^{\a,\pi}$ be the solution to \eqref{eq:mfcE_fwd} and 
\eqref{eq:mfcE_fwd_euler} with the control $\a$, respectively. 
Then by following similar arguments as those for \eqref{eq:X^hat_a-X^pi} and \eqref{eq:R1},
we have
for each
$i\in \{0,\ldots,N-1\}$ that
\begin{align*}
&\sE[|X^{{\a}}_{t_{i+1}}-{X}^{\a,\pi}_{t_{i+1}}|^2]
\\
&\le
 C\bigg\{
R_1^\a+
\sE\bigg[\bigg(
\sum_{j=0}^{ i}\int_{t_j}^{t_{j+1}}
\big|b({t_j},X^{{\a}}_{t_j},{\a}_{t_j},\sP_{(X^{{\a}}_{t_j},{\a}_{t_j})})-
b(t_j,{X}^{\a,\pi}_{t_j},{\a}_{t_j},\sP_{({X}^{\a,\pi}_{t_j},{\a}_{t_j})})\big|
\,\d t\bigg)^2\bigg]
\\
&\q 
+\sE\bigg[\sum_{j=0}^{ i}\int_{t_j}^{t_{j+1}}
|\sigma(t_j,X^{{\a}}_{t_j},\sP_{X^{{\a}}_{t_j}})-
\sigma(t_j,{X}^{\a,\pi}_{t_j},\sP_{{X}^{\a,\pi}_{t_j}})|^2
\,\d t\bigg]
\bigg\},
\end{align*}
with the residual term $R^\a_1$ defined as 
\begin{align}\l{eq:R1_a}
\begin{split}
R^\a_1
&\coloneqq
 \sE\bigg[\sum_{i=0}^{N-1}\int_{t_i}^{t_{i+1}}
\big|b(t,X^{{\a}}_t,{\a}_t,\sP_{(X^{{\a}}_t,{\a}_t)})-
b(t_i,X^{{\a}}_{t_i},{\a}_{t_i},\sP_{(X^{{\a}}_{t_i},{\a}_{t_i})})\big|^2
\,\d t\bigg]
\\
&\q 
+\sE\bigg[\sum_{i=0}^{ N-1}\int_{t_i}^{t_{i+1}}
|\sigma(t,X^{{\a}}_t,\sP_{X^{{\a}}_t})-
\sigma(t_i,X^{{\a}}_{t_i},\sP_{X^{{\a}}_{t_i}})|^2
\,\d t\bigg],
\end{split}
\end{align}
which, along with  the Lipschitz continuity of $b$ and $\sigma$ and Gronwall's inequality, gives that
\begin{align}\l{eq:X_a-X_a,pi_discrete_a}
\begin{split}
\max_{t_i\in \pi}\sE[|X^{{\a}}_{t_i}-{X}^{\a,\pi}_{t_i}|^2]
&\le
 CR^\a_1.
\end{split}
\end{align}
Hence, we can obtain from Remark \ref{rmk:mfcE_regularity},
H\"{o}lder's inequality,
the \textit{a priori} estimate for $\|X^\a\|_{\cS^2}$, and the estimates (\ref{eq:apriori_1}) 
 and \eqref{eq:X_a-X_a,pi_discrete_a} that
 \begin{align}\l{eq:V-V_pi}
 \begin{split}
&
V(\xi_0)-V_\pi(\xi_0)\le \sup_{\a\in \cA_\pi}
\big|J({\a};\xi_0)-J_\pi({\a};\xi_0)\big|
\\
&\le
\sup_{\a\in \cA_\pi}
\sE\bigg[\sum_{i=0}^{N-1}
\int_{t_i}^{t_{i+1}}
\big|
f(t,X^{{\a}}_t,{\a}_t,\sP_{(X^{{\a}}_t,{\a}_t)})
- f(t_i,{X}^{\a,\pi}_{t_i},{\a}_{t_i},\sP_{({X}^{\a,\pi}_{t_i},{\a}_{t_i})})
 \big|\,\d t
\\
&\q +
|g(X^{{\a}}_T,\sP_{X^{{\a}}_T})-g({X}^{\a,\pi}_T,\sP_{{X}^{\a,\pi}_T})|
\bigg]
\\
&\le
C\sup_{\a\in \cA_\pi}\bigg(R^\a_2+\sE\bigg[\sum_{i=0}^{N-1}
\int_{t_i}^{t_{i+1}}
\big|
f({t_i},X^{{\a}}_{t_i},{\a}_{t_i},\sP_{(X^{{\a}}_{t_i},{\a}_{t_i})})
-f(t_i,{X}^{\a,\pi}_{t_i},{\a}_{t_i},\sP_{({X}^{\a,\pi}_{t_i},{\a}_{t_i})})
 \big|\,\d t
\\
& \q +
  |g(X^{{\a}}_T,\sP_{X^{{\a}}_T})-g({X}^{\a,\pi}_T,\sP_{{X}^{\a,\pi}_T})|
\bigg]
\bigg)
\\
&\le
C\sup_{\a\in \cA_\pi}\Big(R^\a_2+ (1+\|\xi_0\|_{L^2})
\max_{t_i\in \pi}\|X^{{\a}}_{t_i}-{X}^{\a,\pi}_{t_i}\|_{L^2}\Big)
\\
&
\le
C\sup_{\a\in \cA_\pi}\Big(R^\a_2+ (1+\|\xi_0\|_{L^2})(R^\a_1)^{1/2}\Big)
 \end{split}
\end{align}
with the residual term $R^\a_1$ defined as in \eqref{eq:R1_a} and 
 the residual term   $R^\a_2$ defined by:
\begin{align}\l{eq:R2_a}
\begin{split}
R_2^\a
&\coloneqq
\sE\bigg[\sum_{i=0}^{N-1}
\int_{t_i}^{t_{i+1}}
\big|
 f(t,X^{{\a}}_t,{\a}_t,\sP_{(X^{{\a}}_t,{\a}_t)})
 -f({t_i},X^{{\a}}_{t_i},{\a}_{t_i},\sP_{(X^{{\a}}_{t_i},{\a}_{t_i})})
 \big|\,\d t
\bigg].
\end{split}
\end{align}
Note that for each $\a\in \cA_\pi$, we have ${\a}_t={\a}_{t_i}$ for all $t\in [t_i,t_{i+1})$, $i\in \{0,\ldots,N-1\}$,
which 
together with 
(H.\ref{assum:mfc_Holder_t}),
Remarks \ref{rmk:mfcE_regularity} and \ref{rmk:local_holder_t},
and the estimate \eqref{eq:X^a_Holder_discrete}
implies that
\begin{align*}
R^\a_1
&\le
C
\Big((1+\|X^{{\a}}\|^2_{\cS^2}+\|{\a}\|^2_{\cH^2})|\pi|+
\sup_{t_i\in \pi,r\in [t_i,t_{i+1})}
\|X^{{\a}}_r-X^{{\a}}_{t_i}\|_{L^2}^2
\Big)
\\
&\le
C(1+\|\xi_0\|^2_{L^2})|\pi|,
\\
R^\a_2
&\le
C\Big(
(1+\|X^{{\a}}\|^2_{\cS^2}+\|{\a}\|^2_{\cH^2})|\pi|^{1/2}
\\
&\q 
+
(1+\|X^{{\a}}\|_{\cS^2}+\|{\a}\|_{\cH^2})
\sup_{t_i\in \pi,r\in [t_i,t_{i+1})}
\|X^{{\a}}_r-X^{{\a}}_{t_i}\|_{L^2}
\Big)
\\
&\le 
C(1+\|\xi_0\|^2_{L^2})|\pi|^{1/2}.
\end{align*}
These estimates lead to the desired upper bound
 $V(\xi_0)-V_\pi(\xi_0)\le C(1+\|\xi_0\|^2_{L^2})|\pi|^{1/2}$.
\end{proof}
\begin{Remark}\l{rmk:non_compact}
The H\"{o}lder regularity of the optimal control  of \eqref{eq:mfcE}
is essential 
for quantifying the time discretization error and obtaining an  upper bound of $V_\pi(\xi_0)-V(\xi_0)$.
For the lower bound of $V_\pi(\xi_0)-V(\xi_0)$,
we use  
the compactness of $\bA$  to establish 
a uniform estimate
for the $\cH^2$-norms of all controls $\a\in \cA_\pi$ with any partition $\pi$, 
which subsequently leads to a uniform H\"{o}lder regularity of
the solution $X^{\a}$ to \eqref{eq:mfcE_fwd} with  control $\a\in \cA_\pi$
and then the desired half-order convergence;
see  \cite[Proposition 3.1]{picarelli2020}
for a similar result with controlled It\^{o} diffusions.

A similar error bound can be established if 
one can obtain a uniform estimate for
the $\cH^2$-norms of  minimizers of 
$V_\pi(\xi_0)$ defined  in \eqref{eq:mfcE_constant}.
For example, for a given initial state $\xi_0\in L^2(\cF_0;\sR^n)$ and  constant $B\in [0,\infty)$, one may consider 
the   MFC problem $V^B(\xi_0)=\inf_{\a\in \cA_B}J(\a;\xi_0)$, with 
the cost functional $J(\a,\xi_0)$ defined as in \eqref{eq:mfcE}
and
a constrained control set 
$\cA_{B}\subset \cA$ consisting of  all admissible controls $\a\in \cA$ satisfying the 
estimate $\sE[\int_0^T|\a_t|^2\, \d t]\le B$
(see e.g.~\cite{lauriere2020}).
It is clear that for a sufficiently large $B$ (depending on the initial condition),
$V^B(\xi_0)=V(\xi_0)$ and 
the minimizer of
\eqref{eq:mfcE} is also a minimizer of $V^B(\xi_0)$. Hence, 
by following the same arguments as in Theorem \ref{thm:PCPT_discrete_value},
we see the value functions $(V^B_\pi(\xi_0))_{\pi}$  with 
the corresponding piecewise constant policies $\cA_{B,\pi}\subset \cA_B$
also
admit a half-order convergence rate to the value function $V^B(\xi_0)$,
with a constant depending on the initial condition $\xi_0$.
\end{Remark}

\section{Error estimates of optimal controls for piecewise constant policy approximations}\l{sec:PCPT_control}

In this section, we proceed to investigate the convergence of minimizers of the approximate control problems 
\eqref{eq:mfcE_constant_c} and \eqref{eq:mfcE_constant}
based on the convergence of their value functions.

Before presenting our convergence analysis, let us point out that the proofs of Theorems 
\ref{thm:PCPT_c_value} and \ref{thm:PCPT_discrete_value} show that 
for every initial state $\xi_0\in L^2(\cF_0;\sR^n)$ and
for every partition $(\pi_i)_{i\in \sN}$ of $[0,T]$ satisfying 
$\lim_{i\to \infty}|\pi_i|=0$,
we can find controls $(\hat{\a}^{\pi_i})_{i\in \sN}$
satisfying 
for all $i\in \sN$ that $\hat{\a}^{\pi_i}\in \cA_{\pi_i}$,
\begin{align*}
V_{\pi_i}^c(\xi_0)&\le J(\hat{\a}^{\pi_i};\xi_0)\le V_{\pi_i}^c(\xi_0)+C(1+\|\xi_0\|^2_{L^2})|\pi_i|^{1/2},
\\
V_{\pi_i}(\xi_0)&\le J_\pi(\hat{\a}^{\pi_i};\xi_0)\le V_{\pi_i}(\xi_0)+C(1+\|\xi_0\|^2_{L^2})|\pi_i|^{1/2}
\end{align*}
with a constant $C$ independent of $\xi_0$ and $\pi$,
and $\lim_{i\to \infty}\|\hat{\a}^{\pi_i}-\hat{\a}\|_{\cH^2(\sR^k)}=0$, where
 $\hat{\a}\in \cA$ the optimal control of \eqref{eq:mfcE}.
In fact, such   controls can be constructed based on  piecewise constant approximations of the optimal control strategy $\hat{\a}\in \cA$  on $\pi_i$.
Since in practice one may not be able to 
exactly compute these control strategies $(\hat{\a}^{\pi_i})_{i\in\sN}$,
 in this section we shall study the convergence of 
\textit{any $\eps$-optimal controls} of \eqref{eq:mfcE_constant_c} and \eqref{eq:mfcE_constant}.
In particular, we shall establish that any  $\eps$-optimal controls of these approximate control problems 
converge {strongly} to the optimal control of \eqref{eq:mfcE} in $\cH^2(\sR^k)$.

We start by showing several important properties of the cost functional $J(\cdot;\xi_0):\cA\to \sR$
defined as in \eqref{eq:mfcE}.

\begin{Lemma}\l{lemma:J}
Suppose (H.\ref{assum:mfcE})
and (H.\ref{assum:mfcE_hat})
 hold, let $\xi_0\in L^2(\cF_0;\sR^n)$ and 
 let $J(\cdot;\xi_0):\cA\to \sR$ be defined as in \eqref{eq:mfcE}.
 Then $J$ is continuous and strongly  convex.
 More specifically, it holds for all $\a,\b\in \cA$, $\tau\in [0,1]$ that
 \begin{align*}
&\tau J(\a;\xi_0)+(1-\tau) J(\b;\xi_0)-J(\tau \a+(1-\tau)\b;\xi_0)\ge 
\tau(1-\tau)(\lambda_1+\lambda_2)
\|\a-\b\|^2_{\cH^2},
\end{align*}
where $\lambda_1, \lambda_2$ are the constants appearing in (H.\ref{assum:mfcE}(\ref{item:mfcE_convex})).
Moreover, we have for all $\a\in \cA$ that 
\bb\l{eq:coercive}
J(\hat{\a};\xi_0)-J(\a;\xi_0)\le -(\lambda_1+\lambda_2)\|\hat{\a}-{\a}\|_{\cH^2}^2,
\ee
where $\hat{\a}$ is the unique minimizer of \eqref{eq:mfcE} defined in  Theorem \ref{TH:ControlRegularity}.
\end{Lemma}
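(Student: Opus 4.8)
The plan is to treat the three assertions in turn --- continuity of $J(\cdot;\xi_0)$, its strong convexity, and the coercivity bound \eqref{eq:coercive} --- the middle one being the substantive step.

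\emph{Continuity.} I would show that $J(\cdot;\xi_0)$ is in fact locally Lipschitz on $\cA$. Given $\a,\b\in\cA$, standard stability estimates for McKean--Vlasov SDEs applied to \eqref{eq:mfcE_fwd} (using the Lipschitz continuity of $b,\sigma$ from (H.\ref{assum:mfcE}(\ref{item:mfcE_lin}))) give $\|X^\a-X^\b\|_{\cS^2}\le C\|\a-\b\|_{\cH^2}$ together with the moment bound $\|X^\a\|_{\cS^2}\le C(1+\|\xi_0\|_{L^2}+\|\a\|_{\cH^2})$. Combining these with the local Lipschitz continuity and at most quadratic growth of $f$ and $g$ recorded in Remark \ref{rmk:mfcE_regularity} and the Cauchy--Schwarz inequality --- exactly as in the bound on $J(\hat\a^\pi;\xi_0)-J(\hat\a;\xi_0)$ in the proof of Theorem \ref{thm:PCPT_c_value} --- yields $|J(\a;\xi_0)-J(\b;\xi_0)|\le C(1+\|\xi_0\|_{L^2}+\|\a\|_{\cH^2}+\|\b\|_{\cH^2})\|\a-\b\|_{\cH^2}$, hence continuity.

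\emph{Strong convexity.} The key structural fact is that $\a\mapsto X^\a$ is an affine map. Since $b$ and $\sigma$ are affine in $(x,a,\eta)$ and the first moments appearing in \eqref{eq:mfcE_fwd} depend linearly on $(X^\a_t,\a_t)$, for $\tau\in[0,1]$ and $\gamma^\tau\coloneqq\tau\a+(1-\tau)\b$ both $X^{\gamma^\tau}$ and $\tau X^\a+(1-\tau)X^\b$ solve \eqref{eq:mfcE_fwd} with control $\gamma^\tau$ and initial datum $\xi_0$ (the constant terms $b_0,\sigma_0$ being unaffected because $\tau+(1-\tau)=1$), so by pathwise uniqueness $X^{\gamma^\tau}=\tau X^\a+(1-\tau)X^\b$; in particular $\Theta^{\gamma^\tau}_t\coloneqq(X^{\gamma^\tau}_t,\gamma^\tau_t)=\tau\Theta^\a_t+(1-\tau)\Theta^\b_t$. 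I would then, for a.e.\ $(\omega,t)$, apply the convexity inequality of (H.\ref{assum:mfcE}(\ref{item:mfcE_convex})) with base point $\big(\Theta^{\gamma^\tau}_t(\omega),\sP_{\Theta^{\gamma^\tau}_t}\big)$ and comparison point $\big(\Theta^\a_t(\omega),\sP_{\Theta^\a_t}\big)$, realizing the auxiliary probability space as $(\Om,\cF,\sP)$ itself with the coupling $(\tilde\Theta^{\gamma^\tau},\tilde\Theta^\a)=(\Theta^{\gamma^\tau}_t,\Theta^\a_t)$; write the analogous inequality with $\a$ replaced by $\b$; multiply the two by $\tau$ and $1-\tau$ respectively and add. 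Because $\Theta^\a_t-\Theta^{\gamma^\tau}_t=(1-\tau)(\Theta^\a_t-\Theta^\b_t)$ and $\Theta^\b_t-\Theta^{\gamma^\tau}_t=-\tau(\Theta^\a_t-\Theta^\b_t)$, the combination of all first-order contributions (those against $\p_x f,\p_a f,\p_\mu f,\p_\nu f$, which share the same base point) collapses to a common linear functional evaluated at $\tau(\Theta^\a_t-\Theta^{\gamma^\tau}_t)+(1-\tau)(\Theta^\b_t-\Theta^{\gamma^\tau}_t)=0$, while the quadratic remainders combine with factor $\tau(1-\tau)^2+(1-\tau)\tau^2=\tau(1-\tau)$; taking expectations and integrating over $[0,T]$ gives
\[
\tau\,\sE\int_0^T f(t,\Theta^\a_t,\sP_{\Theta^\a_t})\,\d t+(1-\tau)\,\sE\int_0^T f(t,\Theta^\b_t,\sP_{\Theta^\b_t})\,\d t-\sE\int_0^T f(t,\Theta^{\gamma^\tau}_t,\sP_{\Theta^{\gamma^\tau}_t})\,\d t\ge\tau(1-\tau)(\lambda_1+\lambda_2)\|\a-\b\|_{\cH^2}^2 .
\]
Applying the same reasoning to the (unqualified) convexity inequality for $g$, together with $X^{\gamma^\tau}_T=\tau X^\a_T+(1-\tau)X^\b_T$, gives the corresponding inequality for the terminal cost with nonnegative right-hand side; summing the two produces the stated strong convexity of $J$.

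\emph{Coercivity.} For \eqref{eq:coercive} I would combine strong convexity with the optimality of $\hat\a$, whose existence is furnished by the MV-FBSDE construction underlying Theorem \ref{TH:ControlRegularity}, independently of this lemma. Fixing $\a\in\cA$ and $\tau\in(0,1)$ and setting $\gamma^\tau=\tau\a+(1-\tau)\hat\a$, strong convexity gives $\tau J(\a;\xi_0)+(1-\tau)J(\hat\a;\xi_0)-J(\gamma^\tau;\xi_0)\ge\tau(1-\tau)(\lambda_1+\lambda_2)\|\a-\hat\a\|_{\cH^2}^2$; since $J(\gamma^\tau;\xi_0)\ge J(\hat\a;\xi_0)$ the left side is at most $\tau\big(J(\a;\xi_0)-J(\hat\a;\xi_0)\big)$, and dividing by $\tau$ and letting $\tau\downarrow0$ yields $J(\a;\xi_0)-J(\hat\a;\xi_0)\ge(\lambda_1+\lambda_2)\|\a-\hat\a\|_{\cH^2}^2$, which is \eqref{eq:coercive}; since $\lambda_1+\lambda_2>0$, this in turn forces uniqueness of the minimizer, as invoked in the proof of Theorem \ref{TH:ControlRegularity}. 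I expect the only real obstacle to be the careful bookkeeping in the strong-convexity step: checking that the convexity hypothesis of (H.\ref{assum:mfcE}(\ref{item:mfcE_convex})) may legitimately be applied with the coupling realized on the original space, and that all linear and L-derivative first-order terms cancel after the $\tau/(1-\tau)$ combination thanks to the affineness of the controlled flow; everything else is routine Gronwall, moment, and Cauchy--Schwarz estimation already used elsewhere in the paper.
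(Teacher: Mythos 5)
Your proposal is correct, and the continuity and strong-convexity parts follow essentially the paper's own argument: the paper likewise observes that $X^{\gamma}=\tau X^{\a}+(1-\tau)X^{\b}$ by affineness of $(b,\sigma)$ and pathwise uniqueness, applies (H.\ref{assum:mfcE}(\ref{item:mfcE_convex})) at the base point $\gamma$ against both $\a$ and $\b$, and lets the first-order terms cancel while the quadratic remainders combine with factor $\tau(1-\tau)^2+\tau^2(1-\tau)=\tau(1-\tau)$ (the paper works with joint independent copies on $(\tilde\Om,\tilde\cF,\tilde\sP)$ rather than the identity coupling on $\Om$, but since (H.\ref{assum:mfcE}(\ref{item:mfcE_convex})) is required to hold for every pair of random variables with the prescribed marginals, your choice of coupling is equally legitimate). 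Where you genuinely diverge is in the proof of \eqref{eq:coercive}. The paper does not deduce it from strong convexity at all: it proves the convexity inequality \eqref{eq:mfcE_H_convex} for the Hamiltonian $H$, invokes the duality estimate of \cite[Theorem 3.5]{acciaio2019} bounding $J(\hat\a;\xi_0)-J(\a;\xi_0)$ by a Hamiltonian difference minus its first-order terms along the adjoint process $(Y^{\hat\a},Z^{\hat\a})$, and then uses the pointwise optimality condition \eqref{eq:opti} to kill the remaining term and extract $-(\lambda_1+\lambda_2)\|\hat\a-\a\|_{\cH^2}^2$. Your route --- the standard quadratic-growth-at-the-minimizer argument, taking $\gamma^\tau=\tau\a+(1-\tau)\hat\a$, using $J(\gamma^\tau;\xi_0)\ge J(\hat\a;\xi_0)$, dividing by $\tau$ and letting $\tau\downarrow0$ --- is more elementary and avoids both the Hamiltonian computation and the external duality estimate; it only needs $\hat\a$ to be \emph{a} minimizer over the convex set $\cA$, which Theorem \ref{TH:ControlRegularity} supplies independently of this lemma, so there is no circularity. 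What the paper's maximum-principle route buys in exchange is that it reuses machinery (the convexity of $H$ and the optimality condition \eqref{eq:opti}) already central to the rest of the analysis and yields \eqref{eq:coercive} without a limiting argument, but your derivation is a valid and arguably cleaner alternative.
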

\begin{proof}
The continuity of $J$ follows directly from  stability results of \eqref{eq:mfcE} and the local Lipschitz continuity of functions $(f,g)$ (see (H.\ref{assum:mfcE}(\ref{item:mfcE_lipschitz}))).

We now show the strong convexity of the cost functional $J$.  
Let $\a,\b\in \cA$, $\tau\in [0,1]$, 
and let $X^\a$ (resp.~$X^\b$) be the solution to \eqref{eq:mfcE} with control $\a$ (resp.~$\b$).
Let $\gamma=\tau\a+(1-\tau)\b$
and
let $X\coloneqq\tau X^\a+(1-\tau) X^\b$.
We first show $X=X^\gamma$,
where $X^\gamma$  be the solution to \eqref{eq:mfcE} with control $\gamma$.
 It is clear that $X_0=\tau X^\a_0+(1-\tau) X^\b_0=\xi_0=X^\gamma_0$.
For  each $t\in[0,T]$,
we see 
 that 
 $\sE[(X_t,\gamma_t)]=\tau \sE[(X^\a_t,\a_t)]+(1-\tau)\sE[(X^\b_t,\b_t)]$,
 which together with the linearity of the functions $b,\sigma$  in $(x,a,\eta)$ (see (H.\ref{assum:mfcE}(\ref{item:mfcE_lin})))
gives that
\begin{align*}
 b(t,X_t,\gamma_t,\sP_{(X_t,\gamma_t)})
& =
 \tau b(t,X^\a_t,\a_t,\sP_{(X^\a_t,\a_t)})
+(1-\tau)b(t,X^\b_t,\b_t,\sP_{(X^\b_t,\b_t)}),
\\
 \sigma(t,X_t,\sP_{X_t})&=
 \tau \sigma(t,X^\a_t,\sP_{X^\a_t})
+(1-\tau)\sigma(t,X^\b_t,\sP_{X^\b_t}).
\end{align*}
Hence, we can show by using   It\^{o}'s formula that
$X$ satisfies the same MV-SDE as $X^\gamma$, 
which along with the uniqueness of strong solutions
shows that
$X^\gamma=X=\tau X^\a+(1-\tau) X^\b$. 

Let $\tilde{X}^\a_T$ and  $\tilde{X}^\b_T$ be independent copies of 
${X}^\a_T$ and  ${X}^\b_T$, respectively, defined on $L^2(\tilde{\Om},\tilde{\cF},\tilde{\sP};\sR^n)$.
We see that $\tilde{X}^\gamma_T\coloneqq \tau \tilde{X}^\a_T+(1-\tau) \tilde{X}^\b_T$ is an independent copy of $X^\gamma_T$ with distribution $\sP_{X^\gamma_T}$.
Hence
we can obtain from the convexity of $g$ in  (H.\ref{assum:mfcE}(\ref{item:mfcE_convex}))
and $X^\gamma=\tau X^\a+(1-\tau) X^\b$
 that
\begin{align*}
&g(X^\a_T,\sP_{X^\a_T})-g(X^\gamma_T,\sP_{X^\gamma_T})
\\
&\ge \la \p_{x}g(X^\gamma_T,\sP_{X^\gamma_T}), X^\a_T-X^\gamma_T \ra
+\tilde{\sE}[\la\p_\mu g(X^\gamma_T,\mu)(\tilde{X}^\gamma_T),\tilde{X}^\a_T-\tilde{X}^\gamma_T\ra ]
\\
&=(1-\tau)
\Big(
\la \p_{x}g(X^\gamma_T,\sP_{X^\gamma_T}), X^\a_T-X^\b_T \ra
+\tilde{\sE}[\la\p_\mu g(X^\gamma_T,\mu)(\tilde{X}^\gamma_T),\tilde{X}^\a_T-\tilde{X}^\b_T\ra ]
\Big).
\end{align*}
Similarly, we can show that 
\begin{align*}
&g(X^\b_T,\sP_{X^\b_T})-g(X^\gamma_T,\sP_{X^\gamma_T})
\\
&\ge 
\tau
\Big(
\la \p_{x}g(X^\gamma_T,\sP_{X^\gamma_T}), X^\b_T-X^\a_T \ra
+\tilde{\sE}[\la\p_\mu g(X^\gamma_T,\mu)(\tilde{X}^\gamma_T),\tilde{X}^\b_T-\tilde{X}^\a_T\ra ]
\Big),
\end{align*}
which implies that 
$$
\tau \sE[g(X^\a_T,\sP_{X^\a_T})]+(1-\tau)\sE[g(X^\b_T,\sP_{X^\b_T})]
\ge
\sE[g(X^\gamma_T,\sP_{X^\gamma_T})].
$$
Now 
for each $t\in [0,T]$, let 
$(\tilde{X}^\a_t,\tilde{\a}_t)$ and  $(\tilde{X}^\b_t,\tilde{\b}_t)$ be independent copies of 
$({X}^\a_t,\a_t)$ and  $({X}^\b_t,\b_t)$ defined on $L^2(\tilde{\Om},\tilde{\cF},\tilde{\sP};\sR^n\t \sR^k)$, respectively.
We see that $(\tilde{X}^\gamma_t,\tilde{\gamma}_t)\coloneqq \tau (\tilde{X}^\a_t,\tilde{\a}_t)+(1-\tau) (\tilde{X}^\b_t,\tilde{\b}_t)$ is an independent copy of $(X^\gamma_t,\gamma_t)$ with distribution $\sP_{(X^\gamma_t,\gamma_t)}$.
Then we can obtain from the convexity of $f$ in   (H.\ref{assum:mfcE}(\ref{item:mfcE_convex}))
and $(X^\gamma,\gamma)=\tau( X^\a,\a)+(1-\tau) (X^\b,\b)$ that 
\begin{align*}
&f(t,X^\a_t,\a_t,\sP_{(X^\a_t,\a_t)})-f(t,X^\gamma_t,\gamma_t,\sP_{(X^\gamma_t,\gamma_t)})
\\
&\ge 
(1-\tau)\Big(
\la \p_{(x,a)}f(t,X^\gamma_t,\gamma_t,\sP_{(X^\gamma_t,\gamma_t)}), (X^\a_t-X^\b_t,\a_t-\b_t)\ra 
\\
&\q 
+\tilde{\sE}[\la\p_\mu f(t,X^\gamma_t,\gamma_t,\sP_{(X^\gamma_t,\gamma_t)})(\tilde{X}^\gamma_t,\tilde{\gamma}_t),\tilde{X}^\a_t-\tilde{X}^\b_t\ra] 
\\
&\q
+ \tilde{\sE}[\la\p_\nu f(t,X^\gamma_t,\gamma_t,\sP_{(X^\gamma_t,\gamma_t)}),\tilde{\alpha}_t-\tilde{\b}_t\ra ]
\Big)
+(1-\tau)^2
\Big(\lambda_1|\a_t-\b_t|^2+\lambda_2\tilde{\sE}[|\tilde{\alpha}_t-\tilde{\b}_t|^2]\Big),
\end{align*}
Similarly, we can derive a lower bound of 
$f(t,X^\b_t,\b_t,\sP_{(X^\b_t,\b_t)})-f(t,X^\gamma_t,\gamma_t,\sP_{(X^\gamma_t,\gamma_t)})$,
which subsequently leads to the estimate that
\begin{align*}
&\tau\sE[f(t,X^\a_t,\a_t,\sP_{(X^\a_t,\a_t)})]+(1-\tau)\sE[f(t,X^\b_t,\b_t,\sP_{(X^\b_t,\b_t)})]
-\sE[f(t,X^\gamma_t,\gamma_t,\sP_{(X^\gamma_t,\gamma_t)})]
\\
&\ge 
\Big(\tau(1-\tau)^2+\tau^2(1-\tau)\Big)
\Big(\lambda_1\sE[|\a_t-\b_t|^2]+\lambda_2\tilde{\sE}[|\tilde{\alpha}_t-\tilde{\b}_t|^2]
\Big)
\\
&=\tau(1-\tau)(\lambda_1+\lambda_2)
\sE[|\a_t-\b_t|^2].
\end{align*}
Hence, we can conclude from   \eqref{eq:mfcE} the desired strong convexity estimate. 

We proceed to show the estimate \eqref{eq:coercive}.
The linearity of $(b,\sigma)$
and  the convexity of $f$  
in (H.\ref{assum:mfcE}) imply 
that the Hamiltonian $H$ defined as in \eqref{eq:mfcE_hamiltonian} is convex, i.e., 
for all $(t,y,z)\in [0,T]\in \sR^n\t \sR^{n\t d}$, $(x,\eta,a),(x',\eta',a')\in \sR^n\t \cP_2(\sR^n \t \sR^{k})\t \bA$,
\begin{align}\l{eq:mfcE_H_convex}
\begin{split}
&H(t,x,a,\eta,y,z)-H(t,x',a',\eta',y,z)-\la \p_{(x,\a)}H(t,x,a,\eta,y,z), (x-x',a-a')\ra 
\\
&\q 
-\tilde{\sE}[\la\p_\mu H(t,x,a,\eta,y,z)(\tilde{X},\tilde{\a}),\tilde{X}-\tilde{X}'\ra  + \la\p_\nu H(t,x,a,\eta,y,z)(\tilde{X},\tilde{\a}),\tilde{\a}-\tilde{\a}'\ra ] \\
& \le -\lambda_1 |a'-a|^2 - \lambda_2 \tilde{\mathbb{E}}[|\tilde{\a}'-\tilde{\a}|^2],
\end{split}
\end{align}
whenever  $(\tilde{X},\tilde{\a}),(\tilde{X}',\tilde{\a}')\in L^2(\tilde{\Om},\tilde{\cF},\tilde{\sP};\sR^n\t\sR^k)$
with distributions $\eta$ and $\eta'$, respectively.
Moreover, 
 the same arguments as in 
\cite[Theorem 3.5]{acciaio2019} give us that 
\begin{align*}
J(\hat{\a};\xi_0)-J(\a;\xi_0) 
& \leq
\sE\bigg[\int_0^T
\Big(H(t,X^{\hat{\a}}_t,\hat{\alpha}_t,\sP_{(X^{\hat{\a}}_t,\hat{\alpha}_t)},Y^{\hat{\a}}_t,Z^{\hat{\a}}_t)
-H(\a_t, X^{{\a}}_t,{\alpha}_t,\sP_{(X^{{\a}}_t,{\alpha}_t)},Y^{\hat{\a}}_t,Z^{\hat{\a}}_t)\,\d t
\Big)
\bigg]
\\
&\quad 
-\sE\bigg[\int_0^T
\la
\p_x H(t,X^{\hat{\a}}_t,\hat{\alpha}_t,\sP_{(X^{\hat{\a}}_t,\hat{\alpha}_t)},Y^{\hat{\a}}_t,Z^{\hat{\a}}_t),
X^{\hat{\a}}_t-X^{{\a}}_t
\ra \,\d t
\bigg]
\\
&\quad 
-\sE\bigg[\int_0^T
\tilde{\sE}[\la
\p_\mu H(t,X^{\hat{\a}}_t,\hat{\alpha}_t,\sP_{(X^{\hat{\a}}_t,\hat{\alpha}_t)},Y^{\hat{\a}}_t,Z^{\hat{\a}}_t)
(\tilde{X}^{\hat{\a}}_t,\tilde{\hat{\alpha}}_t),
\tilde{X}^{\hat{\a}}_t-\tilde{X}^{{\a}}_t
\ra] \,\d t
\bigg],
\end{align*} 
which along with  \eqref{eq:mfcE_H_convex}  and 
the fact that $\hat{\alpha}$ satisfies  the optimality condition \eqref{eq:opti}
leads to 
\begin{align*}
J(\hat{\a};\xi_0)-J(\a;\xi_0) 
& \leq
\mathbb{E}
\bigg[
 \int_{0}^{T} \la \p_a H(\theta^{\hat{\a}}_t,Y^{\hat{\a}}_t,Z^{\hat{\a}}_t) + \tilde{\mathbb{E}} [\p_\nu  H(\tilde{\theta}^{\hat{\a}}_t,\tilde{Y}^{\hat{\a}}_t,\tilde{Z}^{\hat{\a}}_t)(X^{\hat{\a}}_t,\hat{\alpha}_t)], \hat{\alpha}_t-a_t \ra \, \d t
 \bigg]
\\
&\quad 
- (\lambda_1+\lambda_2)\|\hat{\a}-{\a}\|_{\cH^2}^2 
\\
& \leq -(\lambda_1+\lambda_2)\|\hat{\a}-{\a}\|_{\cH^2}^2.
\end{align*} 
This finishes the proof of the estimate  \eqref{eq:coercive}.
\end{proof}

\begin{Remark}
It is clear that
 for  each 
 partition $\pi$ of $[0,T]$, $J(\cdot;\xi_0):\cA_\pi\to \sR$ is  continuous and strongly convex,
as $\cA_\pi\subset \cA$ is a convex set. 
Moreover, similar arguments as those in Lemma \ref{lemma:J} show that 
the discrete-time cost functional   $J_\pi(\cdot;\xi_0):\cA_\pi\to \sR$   defined  in \eqref{eq:J_pi}
is continuous and strongly convex. 
Then  the standard theory of strongly convex minimization problems on Hilbert spaces
(see e.g., \cite[Lemma 2.33 (ii)]{bonnans2000}) ensures that 
$J(\cdot;\xi_0):\cA_\pi\to \sR$ and $J_\pi(\cdot;\xi_0):\cA_\pi\to \sR$ admit a unique minimizer.
\end{Remark}

We now show the strong 
convergence of 
 $\eps$-optimal controls of the control problem \eqref{eq:mfcE_constant_c} 
 (with piecewise constant controls but continuous-time  state process).

\begin{Theorem}\l{thm:control_PCPT_c}
Suppose (H.\ref{assum:mfcE}) and (H.\ref{assum:mfcE_hat}) hold,
for every  $\xi_0\in L^2(\cF_0;\sR^n)$,
let $J(\cdot;\xi_0):\cA\to \sR$ be  defined as in \eqref{eq:mfcE}
and
let $\hat{\a}\in\cA$ be the optimal control of \eqref{eq:mfcE}, 
and for each 
 partition $\pi$ of $[0,T]$
 let   $\cA_{\pi}$ be defined as in \eqref{eq:A_pi}
 and
$V_\pi^c(\xi_0)\in \sR$ be defined as in \eqref{eq:mfcE_constant_c}.
Then there exists a constant $C>0$, such that  
for all 
$\xi_0\in L^2(\cF_0;\sR^n)$ and $\eps\ge 0$,  for all partitions
$\pi$  of $[0,T]$ with  stepsize $|\pi|$,
and for all $\a \in \cA_{\pi}$ with 
$J(\a;\xi_0)\le V_{\pi}^c(\xi_0)+\eps$, 
$$
\|\hat{\a}-{\a}\|_{\cH^2}\le C\big((1+\|\xi_0\|_{L^{2}})|\pi|^{1/4} + \sqrt{\eps}\big).
$$
\end{Theorem}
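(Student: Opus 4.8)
The plan is to obtain the bound as an almost immediate consequence of two results already established in the excerpt: the strong convexity of the cost functional $J(\cdot;\xi_0)$ over $\cA$ (Lemma~\ref{lemma:J}), and the half-order convergence of the approximate value functions $V_\pi^c(\xi_0)$ to $V(\xi_0)$ (Theorem~\ref{thm:PCPT_c_value}). No separate stability analysis of the state process is needed; everything reduces to a quadratic-growth lower bound combined with a value-function error estimate.

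First I would recall from Theorem~\ref{TH:ControlRegularity} that $\hat{\a}\in\cA$ is the unique minimizer of $J(\cdot;\xi_0)$, so $J(\hat{\a};\xi_0)=V(\xi_0)$, and from Lemma~\ref{lemma:J} the coercivity estimate \eqref{eq:coercive}, namely $J(\hat{\a};\xi_0)-J(\b;\xi_0)\le -(\lambda_1+\lambda_2)\|\hat{\a}-\b\|_{\cH^2}^2$ for every $\b\in\cA$. Since $\cA_\pi\subset\cA$, applying this with $\b=\a$ and rearranging gives
$$
(\lambda_1+\lambda_2)\|\hat{\a}-\a\|_{\cH^2}^2 \le J(\a;\xi_0)-V(\xi_0),
$$
where $\lambda_1+\lambda_2>0$ by (H.\ref{assum:mfcE}(\ref{item:mfcE_convex})), so this inequality may be divided through.

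Next I would bound the right-hand side using the hypotheses and Theorem~\ref{thm:PCPT_c_value}: from $J(\a;\xi_0)\le V_\pi^c(\xi_0)+\eps$ and $V_\pi^c(\xi_0)\le V(\xi_0)+C(1+\|\xi_0\|_{L^2}^2)|\pi|^{1/2}$ we get $J(\a;\xi_0)-V(\xi_0)\le C(1+\|\xi_0\|_{L^2}^2)|\pi|^{1/2}+\eps$, hence
$$
\|\hat{\a}-\a\|_{\cH^2}^2 \le (\lambda_1+\lambda_2)^{-1}\big(C(1+\|\xi_0\|_{L^2}^2)|\pi|^{1/2}+\eps\big).
$$
Taking square roots, using $\sqrt{u+v}\le\sqrt u+\sqrt v$ for $u,v\ge 0$ and $\sqrt{1+\|\xi_0\|_{L^2}^2}\le 1+\|\xi_0\|_{L^2}$, and absorbing $(\lambda_1+\lambda_2)^{-1/2}$ and $\sqrt C$ into a single constant, yields the claimed estimate $\|\hat{\a}-\a\|_{\cH^2}\le C\big((1+\|\xi_0\|_{L^2})|\pi|^{1/4}+\sqrt{\eps}\big)$ with $C$ independent of $\xi_0$, $\pi$ and $\eps$.

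There is essentially no obstacle here: the control estimate is a two-line corollary once the strong convexity of $J$ and the order-$1/2$ value-function error are in hand. The one thing worth flagging is that the exponent $1/4$ is exactly half of the exponent $1/2$ in the value-function bound — this loss of a factor of two in the rate is intrinsic to the quadratic-growth lower bound furnished by strong convexity. Finally, the case $\eps=0$ is covered since an exact minimizer of $V_\pi^c(\xi_0)$ over the closed convex set $\cA_\pi$ exists by strong convexity (cf.\ the remark following Lemma~\ref{lemma:J}).
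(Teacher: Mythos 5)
Your proposal is correct and is essentially identical to the paper's own argument: both combine the coercivity estimate \eqref{eq:coercive} from Lemma \ref{lemma:J} with the half-order bound $V_\pi^c(\xi_0)\le V(\xi_0)+C(1+\|\xi_0\|_{L^2}^2)|\pi|^{1/2}$ from Theorem \ref{thm:PCPT_c_value} and take square roots. No gaps.
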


\begin{proof}
Recall that according to Theorem \ref{thm:PCPT_c_value}, we have  $V(\xi_0) - V_{\pi}^c(\xi_0)\le C(1+\|\xi_0\|^2_{L^{2}})|\pi|^{1/2}$,
for a constant $C$ independent of the initial condition and stepsize.
Therefore, by applying the estimate \eqref{eq:coercive}, we have 
\begin{align*}
(\lambda_1+\lambda_2)\|\hat{\a}-{\a}\|_{\cH^2}^2\le J(\a;\xi_0)-J(\hat{\a};\xi_0)
\le V_{\pi}^c(\xi_0)+\eps-J(\hat{\a};\xi_0)\le C(1+\|\xi_0\|^2_{L^{2}})|\pi|^{1/2} + \eps.
\end{align*}
Taking the square root of  both sides of the inequality yields the claim.
\end{proof}

We now establish the  {strong} convergence of 
 $\eps$-optimal controls of the control problem \eqref{eq:mfcE_constant} 
with  piecewise constant controls, state processes and cost functionals.
For simplicity, we only present the result for the case where $\bA$ is a compact subset 
of $\sR^k$, but  refer the reader to Remark \ref{rmk:non_compact} for possible extensions to 
cases with non-compact $\bA$.

\begin{Theorem}\l{thm:control_PCPT_discrete}
Suppose (H.\ref{assum:mfcE}), (H.\ref{assum:mfcE_hat})
and (H.\ref{assum:mfc_Holder_t}) hold,
and $\bA$ is a compact subset of $\sR^k$.
For every $\xi_0\in L^2(\cF_0;\sR^n)$,
 $\hat{\a}\in\cA$ be the optimal control of \eqref{eq:mfcE}, 
and for each 
 partition $\pi$ of $[0,T]$,
 let  $\cA_{\pi}$ be defined as in \eqref{eq:A_pi},
  $J_\pi(\cdot;\xi_0):\cA_\pi\to \sR$ be  defined as in \eqref{eq:J_pi}
and
$V_\pi(\xi_0)\in \sR$ be defined as in \eqref{eq:mfcE_constant}.
Then 
 there exists a constant $C>0$, such that  
for all 
$\xi_0\in L^2(\cF_0;\sR^n)$ and $\eps\ge 0$,  for all partitions
$\pi$  of $[0,T]$ with  stepsize $|\pi|$,
and for all $\a \in \cA_{\pi}$ with 
$J_\pi(\a;\xi_0)\le V_{\pi}(\xi_0)+\eps$, 
$$
\|\hat{\a}-{\a}\|_{\cH^2}\le C\big((1+\|\xi_0\|_{L^{2}})|\pi|^{1/4} + \sqrt{\eps}\big).
$$
\end{Theorem}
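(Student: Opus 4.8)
The plan is to derive the control error directly from the strong convexity of the cost functional, exactly as in the proof of Theorem \ref{thm:control_PCPT_c}, the only new ingredient being a uniform comparison between the continuous-time cost $J(\cdot;\xi_0)$ and the discrete-time cost $J_\pi(\cdot;\xi_0)$ over $\cA_\pi$. First I would note that any $\a\in\cA_\pi$ is in particular an admissible control in $\cA$, so the coercivity estimate \eqref{eq:coercive} of Lemma \ref{lemma:J} applies and gives
\begin{align*}
(\lambda_1+\lambda_2)\|\hat{\a}-\a\|_{\cH^2}^2\le J(\a;\xi_0)-J(\hat{\a};\xi_0)=J(\a;\xi_0)-V(\xi_0).
\end{align*}
Thus it suffices to bound the right-hand side by $C(1+\|\xi_0\|_{L^2}^2)|\pi|^{1/2}+\eps$ and then take square roots, using $\sqrt{a+b}\le\sqrt a+\sqrt b$ for $a,b\ge0$.

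Second, I would invoke the uniform estimate $\sup_{\a\in\cA_\pi}\big|J(\a;\xi_0)-J_\pi(\a;\xi_0)\big|\le C(1+\|\xi_0\|_{L^2}^2)|\pi|^{1/2}$, which is precisely the chain of inequalities leading to \eqref{eq:V-V_pi} in Step~2 of the proof of Theorem \ref{thm:PCPT_discrete_value}: using the compactness of $\bA$ one obtains a bound $\|\a\|_{\cH^2}\le C$ that is uniform over all $\a\in\cA_\pi$ and over all partitions $\pi$, hence uniform moment bounds and $1/2$-H\"older bounds for the solutions $X^\a$ of \eqref{eq:mfcE_fwd} and $X^{\a,\pi}$ of \eqref{eq:mfcE_fwd_euler} (via Gronwall's inequality and the Burkholder--Davis--Gundy inequality), after which the residual terms $R_1^\a$, $R_2^\a$ are controlled by Remarks \ref{rmk:mfcE_regularity}, \ref{rmk:local_holder_t} and (H.\ref{assum:mfc_Holder_t}).

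Third, I would assemble the pieces: from $\eps$-optimality $J_\pi(\a;\xi_0)\le V_\pi(\xi_0)+\eps$, the uniform comparison above, and the value-function bound $V_\pi(\xi_0)\le V(\xi_0)+C(1+\|\xi_0\|_{L^2}^2)|\pi|^{1/2}$ of Theorem \ref{thm:PCPT_discrete_value}, one gets
\begin{align*}
J(\a;\xi_0)-V(\xi_0)\le \big|J(\a;\xi_0)-J_\pi(\a;\xi_0)\big|+\big(V_\pi(\xi_0)-V(\xi_0)\big)+\eps\le C(1+\|\xi_0\|_{L^2}^2)|\pi|^{1/2}+\eps,
\end{align*}
and substituting this into the coercivity inequality and taking square roots yields $\|\hat{\a}-\a\|_{\cH^2}\le C\big((1+\|\xi_0\|_{L^2})|\pi|^{1/4}+\sqrt{\eps}\big)$, as claimed. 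The main (and essentially only) technical point is the second step, the uniform-in-$\cA_\pi$ comparison of $J$ and $J_\pi$; since this was already established within the proof of Theorem \ref{thm:PCPT_discrete_value}, here it reduces to citing that argument, and the compactness of $\bA$ is exactly what makes those bounds uniform over all piecewise constant controls. The remainder is routine bookkeeping.
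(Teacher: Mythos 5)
Your proposal is correct and follows essentially the same route as the paper's proof: coercivity of $J$ via \eqref{eq:coercive}, the uniform bound $\sup_{\a\in\cA_\pi}|J(\a;\xi_0)-J_\pi(\a;\xi_0)|\le C(1+\|\xi_0\|_{L^2}^2)|\pi|^{1/2}$ extracted from Step 2 of Theorem \ref{thm:PCPT_discrete_value}, the $\eps$-optimality of $\a$, and the value-function estimate of Theorem \ref{thm:PCPT_discrete_value}, followed by taking square roots. The telescoping decomposition you use is the same add-and-subtract of $J_\pi(\a;\xi_0)$ that the paper performs.
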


\begin{proof}
Recall that 
Step 2 of the proof of Theorem \ref{thm:PCPT_discrete_value} (see \eqref{eq:V-V_pi})
proves that there exists a constant $C>0$, independent of $\xi_0$ and $\pi$, such that for all $\a\in \cA_{\pi}$,
$|J({\a};\xi_0)-J_{\pi}({\a};\xi_0)|\le C(1+\|\xi_0\|^2_{L^2})|\pi|^{1/2}$.
Hence, by  the estimates \eqref{eq:coercive}, for all $\a\in \cA_\pi$ with $J_{\pi}(\a;\xi_0)\le  V_{\pi}(\xi_0)+\eps$,
\begin{align*}
(\lambda_1+\lambda_2)\|\hat{\a}-{\a}\|_{\cH^2}^2 &\le J(\a;\xi_0)-J(\hat{\a};\xi_0)  - J_{\pi}(\a;\xi_0) + J_{\pi}(\a;\xi_0)  \\
& \leq  J(\a;\xi_0)-J(\hat{\a};\xi_0) - J_{\pi}(\a;\xi_0) + V_{\pi}(\xi_0)+\eps  \\
& \leq C(1+\|\xi_0\|^2_{L^{2}})|\pi|^{1/2} + V_{\pi}(\xi_0)-  J(\hat{\a};\xi_0) + \eps\\
& \leq C(1+\|\xi_0\|^2_{L^{2}})|\pi|^{1/2} + \eps,
\end{align*}
where  the last estimate follows from Theorem \ref{thm:PCPT_discrete_value}.
Taking the square root of  both sides of the inequality yields the claim.
\end{proof}

\appendix

\section{Proofs of Propositions \ref{prop:mcfE1} and  \ref{prop:mcfE2} and Lemma \ref{lemma:mono_stab}}
\l{appendix:hat_a_existence}

The following Kantorovich duality theorem,
which plays an important role in the following analysis,
follows as a special case of \cite[Theorem 5.10]{Villani}.

\begin{Lemma}\label{lem:Kant}
Let $(\mathcal{X},\mu)$ and $(\mathcal{Y},\nu)$ be two Polish probability spaces and let $\omega: \mathcal{X} \times \mathcal{Y} \to [0,\infty)$ be a continuous function. Then
we have that
\begin{align*}
\inf_{\kappa \in \Pi(\mu,\nu)} \int_{\mathcal{X} \times \mathcal{Y}} \omega(x,y) \, \mathrm{d} \kappa(x,y) =
 \sup_{
 \substack{
 (\psi,\varphi)\in C_b(\cX)\t C_b(\cY),
 \\\varphi-\psi\le \om}}
   \left( \int_{\mathcal{Y}} \varphi(y)  \, \mathrm{d}\nu(y)  - \int_{\mathcal{X}} \psi(x)  \, \mathrm{d}\mu(x)  \right),
\end{align*}
where $\Pi(\mu,\nu)$ is the set of all couplings of $\mu$ and $\nu$,
and $ C_b(\cX)$ (resp.~$C_b(\cY)$) is the space of bounded continuous functions
$\cX\to \sR$ (resp.~$\cY\to \sR$).
\end{Lemma}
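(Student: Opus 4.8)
The plan is to obtain Lemma~\ref{lem:Kant} as an immediate consequence of the general Kantorovich duality, \cite[Theorem 5.10]{Villani}, by checking that our hypotheses are a particular instance of the hypotheses there. That theorem treats an arbitrary lower semicontinuous cost $c:\mathcal{X}\times\mathcal{Y}\to\mathbb{R}\cup\{+\infty\}$ on a product of Polish probability spaces, subject only to a lower bound $c(x,y)\ge a(x)+b(y)$ with $a\in L^1(\mu)$, $b\in L^1(\nu)$ upper semicontinuous, and its duality part asserts that the minimal transport cost coincides with the supremum of $\int_{\mathcal{Y}}\varphi\,\mathrm{d}\nu-\int_{\mathcal{X}}\psi\,\mathrm{d}\mu$ over pairs $(\psi,\varphi)\in C_b(\mathcal{X})\times C_b(\mathcal{Y})$ with $\varphi-\psi\le c$.

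First I would set $c=\omega$. Since $\omega$ is continuous it is lower semicontinuous, and since it takes values in $[0,\infty)$ it is finite-valued and nonnegative; as $\mu$ and $\nu$ are probability measures, the constant functions $a\equiv 0$ and $b\equiv 0$ lie in $L^1(\mu)$ and $L^1(\nu)$ respectively, are trivially upper semicontinuous, and satisfy $\omega(x,y)\ge a(x)+b(y)$. Together with the assumption that $\mathcal{X}$ and $\mathcal{Y}$ are Polish, this verifies every hypothesis of \cite[Theorem 5.10]{Villani}, and its conclusion, with $c$ replaced by $\omega$ and with the roles of $\mathcal{X},\mathcal{Y}$ and the signs of $\psi,\varphi$ matched to our notation, reads
\begin{align*}
\inf_{\kappa\in\Pi(\mu,\nu)}\int_{\mathcal{X}\times\mathcal{Y}}\omega(x,y)\,\mathrm{d}\kappa(x,y)
=\sup_{\substack{(\psi,\varphi)\in C_b(\mathcal{X})\times C_b(\mathcal{Y}),\\ \varphi-\psi\le\omega}}\left(\int_{\mathcal{Y}}\varphi(y)\,\mathrm{d}\nu(y)-\int_{\mathcal{X}}\psi(x)\,\mathrm{d}\mu(x)\right),
\end{align*}
which is exactly the assertion of the lemma. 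One could additionally record that the infimum on the left is attained, again by \cite[Theorem 5.10]{Villani}, although this is not used elsewhere in the paper.

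I do not expect any genuine obstacle here: the proof is simply the observation that the stated setting is a special case of the cited theorem. The only point meriting attention is that one must invoke the form of the duality in \cite[Theorem 5.10]{Villani} in which the test functions range over $C_b(\mathcal{X})\times C_b(\mathcal{Y})$ — rather than over the larger class $L^1(\mu)\times L^1(\nu)$ — since it is the bounded-continuous version that is asserted in Lemma~\ref{lem:Kant}; beyond this, the work consists entirely of translating notation and confirming the (trivial) integrability and semicontinuity of the constant lower bound.
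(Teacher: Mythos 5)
Your proposal is correct and matches the paper exactly: the paper gives no separate argument for Lemma \ref{lem:Kant}, stating only that it follows as a special case of \cite[Theorem 5.10]{Villani}, which is precisely what you do. Your verification of the hypotheses (continuity hence lower semicontinuity of $\omega$, and the trivial integrable lower bound $a\equiv b\equiv 0$) is a sound and complete justification of that reduction.
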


\begin{proof}[Proof of Proposition \ref{prop:mcfE1}]
We first show that the functions 
$(\hat{b},\sigma,\hat{f},\hat{g})$ 
satisfy the Lipschitz continuity.
The claim  obviously holds for the function $\sigma$ due to (H.\ref{assum:mfcE}(\ref{item:mfcE_lin})).
To show the Lipschitz continuity of $\hat{g}$,
 for any $(x,\mu), (x',\mu') \in \sR^n \t \cP_2(\sR^n)$ and any coupling $\kappa$ of $\mu$ and $\mu'$
(i.e., $\kappa\in \Pi(\mu,\mu')$),
we observe from (H.\ref{assum:mfcE}(\ref{item:mfcE_lipschitz}))   that
\begin{align}\l{eq:g_hat_lip}
\begin{split}
&|\hat{g}(x,\mu)-\hat{g}(x',\mu')|^2 
\\
& \leq 2|\p_x g(x,\mu)- \p_x g(x',\mu')|^2  
 +  2 \left| \int_{\sR^n}\p_\mu g(\tilde{x},\mu)(x)\,\d \mu(\tilde{x}) - \int_{\sR^n}\p_\mu g(\tilde{x}',\mu')(x')\,\d \mu'(\tilde{x}')  \right|^2 \\
& \leq 
C (|x-x'|^2
+\cW^2_2(\mu,\mu'))
 + 2 \left( \int_{\sR^n \t \sR^n } |\p_\mu g(\tilde{x},\mu)(x) - \p_\mu g(\tilde{x}',\mu')(x')| \,\d \kappa(\tilde{x},\tilde{x}') \right)^2
\\
& \leq C 
\bigg\{
|x-x'|^2 + \cW^2_2(\mu,\mu')+ \left( \int_{\sR^n \t \sR^n } 
\Big(
|\tilde{x}-\tilde{x}'|+|x-x'|+\cW_2(\mu,\mu')
\Big) \,\d \kappa(\tilde{x},\tilde{x}') \right)^2
\bigg\},
\end{split}
\end{align}
where $C>0$ depends on the Lipschitz constant  in (H.\ref{assum:mfcE}(\ref{item:mfcE_lipschitz})). 
Then, by applying Jensen's inequality to the above estimate
and taking the infimum   over all  $\kappa\in \Pi(\mu,\mu')$,
we can deduce that 
$|\hat{g}(x,\mu)-\hat{g}(x',\mu')| \leq C (|x-x'| + \cW_2(\mu,\mu'))$.

Before proceeding to show the Lipschitz continuity of $\hat{b}$ and $\hat{f}$, we first establish the Lipschitz continuity of $\phi(t,\chi)$ defined as in (H.\ref{assum:mfcE_hat}(\ref{item:ex})).
Let $\chi,\chi' \in \cP_2(\sR^{n} \t \sR^{n})$ be given. By applying Lemma \ref{lem:Kant} with $\mathcal{X}=\cY=\sR^{n} \t \sR^{k}$, $\nu=\phi(t,\chi)$, $\mu=\phi(t,\chi')$ and 
the function
$\omega\big((x',y'),(x,y)\big) \coloneqq |x-x'|^2 + |y-y'|^2$ for any $(x,y)\in \cY, (x',y')\in \cX$, 
we can obtain from the definition of $\phi$ that 
\begin{align*}
& \cW_2^{2}(\phi(t,\chi),\phi(t,\chi'))  \\
& = \sup \left( \int_{\sR^{n} \t \sR^{k}} h_1(x,y) \, \mathrm{d}\phi(t,\chi) (x,y) - \int_{\sR^{n} \t \sR^{k}} h_2(x',y') \, \mathrm{d}\phi(t,\chi') (x',y') \right)  \\
& =  \sup \left( \int_{\sR^{n} \t \sR^{n}} h_1(x,\hat{\a}(t,x,y,\chi)) \, \mathrm{d} \chi(x,y) - \int_{\sR^{n} \t \sR^{n}} h_2(x',\hat{\a}(t,x',y',\chi')) \, \mathrm{d} \chi'(x',y') \right),
\end{align*}
where the supremum is taken over all bounded continuous functions $h_1, h_2: \sR^{n} \t \sR^{k} \to \sR$ satisfying $h_1(x,y)- h_2(x',y') \leq |x-x'|^2 + |y-y'|^2$ for any $(x,y), (x',y') \in \sR^{n} \t \sR^{k}$. Note that for any given 
such functions $h_1,h_2$,  the Lipschitz continuity of $\hat{\a}$ in 
  (H.\ref{assum:mfcE_hat}(\ref{item:ex})) implies that
\begin{align*}
\begin{split}
&h_1(x,\hat{\a}(t,x,y,\chi))  - h_2(x',\hat{\a}(t,x',y',\chi')) 
\\
&\leq  (3L^2_{\a}+1) \left( |x-x'|^2 + |y-y'|^2 + \cW_2^{2}(\chi,\chi') \right)
\coloneqq \omega_2\big((x,y),(x',y')\big).
\end{split}
\end{align*}
Hence, another application of Lemma \ref{lem:Kant} with 
$\mathcal{X}=\cY=\sR^{n} \t \sR^{n}$, $\nu=\chi$, $\mu=\chi'$ and $\om=\om_2$ gives us that
\begin{align*}
\cW_2^{2}(\phi(t,\chi),\phi(t,\chi'))
&\le
\sup \left( \int_{\sR^{n} \t \sR^{n}} \tilde{h}_1(x,y) \, \mathrm{d} \chi(x,y) - \int_{\sR^{n} \t \sR^{n}} \tilde{h}_2(x',y') \, \mathrm{d} \chi'(x',y') \right) \\
& = \inf_{\kappa  \in \Pi(\chi',\chi)}  \int_{(\sR^n \t \sR^{n}) \t (\sR^n \t \sR^{n})}  \omega_2(x,y)
\, \mathrm{d} \kappa(x,y),
\end{align*}
where the supremum is 
 taken over all bounded continuous functions $\tilde{h}_1, \tilde{h}_2: \sR^{n} \t \sR^{n} \to\sR$ satisfying
 $\tilde{h}_1-\tilde{h}_2\le \om_2$. Thus, we readily deduce from the above estimate that
\begin{align}\label{eq:PF}
\cW_2(\phi(t,\chi),\phi(t,\chi')) \leq C \cW_2(\chi,\chi'),
\end{align}
with a constant  $C>0$ depending only on $L_{\a}$. 

Now  for any $(x,y,\chi), (x',y',\chi') \in \sR^n\t \sR^n \t \cP_2(\sR^n \t \sR^{n})$, 
we can obtain from \eqref{eq:mfc_coefficients2},  (H.\ref{assum:mfcE}(\ref{item:mfcE_lin})) and the Lipschitz continuity of $\hat{\a}$ in (H.\ref{assum:mfcE_hat}(\ref{item:ex}))  that
\begin{align*}
 &|\hat{b}(t,x,y,\chi) - \hat{b}(t,x',y',\chi')| 
 \\
 &= |b(t,x,\hat{\a}(t,x,y,\chi),\phi(t,\chi)) - b(t,x',\hat{\a}(t,x',y',\chi'),\phi(t,\chi'))| \\
& \leq C \left( |x-x'| + |y-y'| + \cW_2(\chi,\chi') \right),
\end{align*}
which shows the Lipschitz continuity of $\hat{b}$.
Finally, we shall establish the Lipschitz continuity of $\hat{f}$. 
Observe  that $\p_x H(t,\cdot)$  and $\p_\mu H(t,\cdot)(\cdot,\cdot)$ are Lipschitz continuous (uniformly in $t$), 
which follows from the definition  of the Hamiltonian $H$ in \eqref{eq:mfcE_hamiltonian} and 
(H.\ref{assum:mfcE}(\ref{item:mfcE_lin})(\ref{item:mfcE_lipschitz})). Now, for any $(x,y,z,\rho), (x',y',z',\rho') \in \sR^n \t \sR^n \t \sR^{n \times d} \t \cP_2(\sR^n \t \sR^{n} \t  \sR^{n \t d})$,
let $\chi=\pi_{1,2} \sharp \rho$ (resp.~$\chi'=\pi_{1,2} \sharp \rho'$)
  the marginal of the measure $\rho$ (resp.~$\rho'$) on $\sR^n\t\sR^n$. Then,
  we can obtain from the definition of $\hat{f}$ in \eqref{eq:mfc_coefficients2} that 
\begin{align*}
& |\hat{f}(t,x,y,z,\rho) - \hat{f}(t,x',y',z',\rho')| \\
& \leq  | \p_x H(t,x,\hat{\a}(t,x,y,\chi),\phi(t,\chi),y,z) -  \p_x H(t,x',\hat{\a}(t,x',y',\chi'),\phi(t,\chi'),y',z')|  \\
& \quad +   \Bigg |\int_{\sR^n\t \sR^n\t \sR^{n\t d}}
\p_\mu H(t,\tilde{x},\hat{\a}(t,\tilde{x},\tilde{y},\chi),\phi(t,\chi),\tilde{y},\tilde{z})
(x,\hat{\a}(t,{x},{y},\chi))\,\d \rho(\tilde{x},\tilde{y},\tilde{z}) \\
& \quad - \int_{\sR^n\t \sR^n\t \sR^{n\t d}}
\p_\mu H(t,\tilde{x}',\hat{\a}(t,\tilde{x}',\tilde{y}',\chi'),\phi(t,\chi'),\tilde{y}',\tilde{z}')
(x',\hat{\a}(t,{x}',{y}',\chi'))\,\d \rho'(\tilde{x}',\tilde{y}',\tilde{z}') \Bigg| \\
& \coloneqq\Sigma_1 + \Sigma_2. 
\end{align*}
Note that one can easily deduce from Lemma \ref{lem:Kant}
that 
$\cW_2(\pi_{1,2} \sharp  \rho, \pi_{1,2} \sharp \rho') \leq \cW_2(\rho,\rho')$.
Then, by using 
the uniform Lipschitz continuity of $\p_x H(t,\cdot)$,
 (H.\ref{assum:mfcE_hat}(\ref{item:ex})) and
 (\ref{eq:PF}),
 we have the estimate that 
$\Sigma_1 \leq C \left( |x-x'|+ |y-y'|+ |z-z'| + \cW_2(\rho,\rho') \right)$.
 Furthermore, 
by using the same manipulations as in \eqref{eq:g_hat_lip} with  an arbitrary coupling of $\rho$ and  $\rho'$,
and employing  the Lipschitz continuity of $\p_\mu H(t,\cdot)(\cdot,\cdot)$
 along with (H.\ref{assum:mfcE_hat}(\ref{item:ex})) and (\ref{eq:PF}), 
 we can 
conclude the same upper bound for $\Sigma_2$, 
which leads to the desired Lipschitz continuity of $\hat{f}$.

It remains to show that the functions 
$(\hat{b},\sigma,\hat{f})$ 
satisfy the integrability conditions.
We can clearly see from
(H.\ref{assum:mfcE}(\ref{item:mfcE_lin}))
 that 
$\|\sigma(\cdot,0,\bm{\delta}_{0_{n}})\|_{L^\infty(0,T)}=
\|\sigma_0\|_{L^\infty(0,T)}<\infty$.
Moreover, 
 \eqref{eq:PushF} 
 and  (H.\ref{assum:mfcE_hat}(\ref{item:ex})) 
 imply that 
 $\|\phi(t,\bm{\delta}_{0_{n+n}})\|_2=\|\bm{\delta}_{(0,\hat{\a}(t,0,0,\bm{\delta}_{0_{n+n}}))}\|_2 
\le \|\hat{\a}(\cdot,0,0,\bm{\delta}_{0_{n+n}})\|_{L^\infty(0,T)}<\infty$
for all $t\in [0,T]$.
Hence, we can obtain from 
 \eqref{eq:mfc_coefficients2}  and
 (H.\ref{assum:mfcE}(\ref{item:mfcE_lin})(\ref{item:mfcE_lipschitz}))  
that
$ \|\hat{b}(\cdot,0,0,\bm{\delta}_{0_{n+n}})\|_{L^2(0,T)}+
 \| \hat{f}(\cdot,0,0,0,\bm{\delta}_{0_{n+n+nd}})\|_{L^\infty(0,T)}
 <\infty$,
 which completes the proof.
\end{proof}

\begin{proof}[Proof of Proposition \ref{prop:mcfE2}]
Throughout this proof,
let $t\in [0,T]$, 
for all $i\in \{1,2\}$
let 
$\Theta_i=(X_i,Y_i,Z_i)\in L^2(\Om;\sR^n\t \sR^n\t\sR^{n\t d})$
be a given random variable
and
 $\a_i=\hat{\a}(t,X_i,Y_i,\sP_{(X_i,Y_i)})$.

%
%
Let $(\tilde{X}_i,\tilde{Y}_i, \tilde{Z}_i)_{i=1}^2$ be an independent copy of 
$({X}_i,{Y}_i, {Z}_i)_{i=1}^2$ defined on the space 
$L^2(\tilde{\Om},\tilde{\cF},\tilde{\sP})$.
By applying the  convexity of $g$ in  (H.\ref{assum:mfcE}(\ref{item:mfcE_convex}))
with $(x',\mu')=(X_1(\om),\sP_{X_1})$, $(x,\mu)=(X_2(\om),\sP_{X_2})$ for each $\om$,
taking the expectation with respect to the measure $\sP$
and then exchanging the role of $X_1$ and $X_2$ in the estimates,
we obtain the desired monotonicity property of $\hat{g}$ 
in \eqref{eq:monotonicity} as follows:
\begin{align*}
0&\leq 
\sE[\la \p_x g(X_1,\sP_{X_1})-\p_x g(X_2,\sP_{X_2}),X_1-X_2 \ra]
\\
& \q +
\sE\big[ \tilde{\sE}[
\la \p_\mu g({X}_1,\sP_{X_1})(\tilde{X}_1)-\p_\mu g({X}_2,\sP_{X_1})(\tilde{X}_2),
\tilde{X}_1-\tilde{X}_2 \ra]\big]
\\
&=\sE[\la \hat{g}(X_1,\sP_{X_1})-\hat{g}(X_2,\sP_{{X}_2}), X_1-X_2\ra],
\end{align*}
where for the last equality we have used Fubini's theorem and the fact that 
$\sP_{{X}_i}=\tilde{\sP}_{\tilde{X}_i}$ for $i=1,2$.


To show monotonicity of $\hat{f}$, 
we first deduce from 
the definition of $\hat{b}$ (see \eqref{eq:mfc_coefficients2})
and
the linearity of $H$ in $(y,z)$ (see \eqref{eq:mfcE_hamiltonian})
that
\begin{align*}
\begin{split}
&\la \hat{b}(t,X_1,Y_1,\sP_{(X_1,Y_1)})-\hat{b}(t,X_2,Y_2,\sP_{(X_2,Y_2)}),  Y_1-Y_2\ra
\\
&\q +\la \sigma(t,X_1,\sP_{X_1})-\sigma(t,X_2,\sP_{X_2}),  Z_1-Z_2\ra
\\
&=\la {b}(t,X_1,\a_1,\sP_{(X_1,\a_1)}),  Y_1-Y_2\ra
+\la \sigma(t,X_1,\sP_{X_1}),  Z_1-Z_2\ra
\\
&\q -
\big({b}(t,X_2,\a_2,\sP_{(X_2,\a_2)}),  Y_1-Y_2\ra
+\la \sigma(t,X_2,\sP_{X_2}), Z_1- Z_2\ra
\big)
\\
&=H(t,X_1,\a_1,\sP_{(X_1,\a_1)},Y_1,Z_1)-H(t,X_1,\a_1,\sP_{(X_1,\a_1)},Y_2,Z_2)
\\
&\q
-\big(H(t,X_2,\a_2,\sP_{(X_2,\a_2)},Y_1,Z_1)-H(t,X_2,\a_2,\sP_{(X_2,\a_2)},Y_2,Z_2)\big).
\end{split}
\end{align*}
Moreover, by 
setting 
$\tilde{\a}_i=\hat{\a}(t,\tilde{X}_i,\tilde{Y}_i,\sP_{(X_i,Y_i)})$ for  all $i=1,2$
and
using the definition of $\hat{f}$ 
in \eqref{eq:mfc_coefficients2},
  we can obtain that
\begin{align*}
\begin{split}
&\sE\big[ \la -\hat{f}(t,\Theta_1,\sP_{\Theta_1})+\hat{f}(t,\Theta_2,\sP_{\Theta_2}), X_1-X_2\ra\big]
\\
&=-\sE\big[\la \p_x H(t,X_1,\a_1,\sP_{(X_1,\a_1)},Y_1,Z_1), {X}_1-{X}_2\ra\big]
\\
&\q
-\sE\big[ \tilde{\sE}[\la
\p_\mu H(t,X_1,\a_1,\sP_{(X_1,\a_1)},Y_1,Z_1)(\tilde{X}_1,\tilde{\a}_1), \tilde{X}_1-\tilde{X}_2\ra]
\big]
\\
&\q
+\sE\big[\la \p_x H(t,X_2,\a_2,\sP_{(X_2,\a_2)},Y_2,Z_2), {X}_1-{X}_2\ra\big]
\\
&\q
+\sE\big[\tilde{\sE}[
\la
\p_\mu H(t,X_2,\a_2,\sP_{(X_2,\a_2)},Y_2,Z_2)(\tilde{X}_2,\tilde{\a}_2), \tilde{X}_1-\tilde{X}_2\ra]
\big],
\end{split}
\end{align*}
where
we have also applied
 Fubini's theorem
and the fact
 that
 $\sP_{({X}_i,{Y}_i,{Z}_i,{\a}_i)}=\tilde{\sP}_{(\tilde{X}_i,\tilde{Y}_i,\tilde{Z}_i,\tilde{\a}_i)}$ for all $i=1,2$.

Therefore, we can conclude from \eqref{eq:mfcE_H_convex} that
\begin{align*}
\begin{split}
&\sE\big[\la \hat{b}(t,X_1,Y_1,\sP_{(X_1,Y_1)})-\hat{b}(t,X_2,Y_2,\sP_{(X_2,Y_2)}),  Y_1-Y_2\ra
\\
&\q+\la \sigma(t,X_1,\sP_{X_1})-\sigma(t,X_2,\sP_{X_2}),  Z_1-Z_2\ra
+ \la -\hat{f}(t,\Theta_1,\sP_{\Theta_1})+\hat{f}(t,\Theta_2,\sP_{\Theta_2}), X_1-X_2\ra\big]
\\
&=\sE\Big[H(t,X_1,\a_1,\sP_{(X_1,\a_1)},Y_1,Z_1)-H(t,X_2,\a_2,\sP_{(X_2,\a_2)},Y_1,Z_1)
%
\\
&\quad
-\la \p_x H(t,X_1,\a_1,\sP_{(X_1,\a_1)},Y_1,Z_1), {X}_1-{X}_2\ra
 \\
& \quad - \tilde{\sE}[\la
\p_\mu H(t,X_1,\a_1,\sP_{(X_1,\a_1)},Y_1,Z_1)(\tilde{X}_1,\tilde{\a}_1), \tilde{X}_1-\tilde{X}_2\ra]
\Big]
\\
&\q
-\sE\Big[H(t,X_1,\a_1,\sP_{(X_1,\a_1)},Y_2,Z_2) - H(t,X_2,\a_2,\sP_{(X_2,\a_2)},Y_2,Z_2)
\\
&\quad
-\la \p_x H(t,X_2,\a_2,\sP_{(X_2,\a_2)},Y_2,Z_2), {X}_1-{X}_2\ra
\\
& \quad -\tilde{\sE}[
\la
\p_\mu H(t,X_2,\a_2,\sP_{(X_2,\a_2)},Y_2,Z_2)(\tilde{X}_2,\tilde{\a}_2), \tilde{X}_1-\tilde{X}_2\ra]
\Big]
\\
&\le 
-2(\lambda_1 +\lambda_2) \sE[|\a_1-\a_2|^2], 
\end{split}
\end{align*}
where 
we have applied
Fubini's theorem,
\eqref{eq:opti_markov},
 and the definitions of 
$(\a_1,\a_2)$
  to derive the last estimate.
This shows the desired monotonicity property of $\hat{f}$ 
and completes the proof.
\end{proof}

\begin{proof}[Proof of Lemma \ref{lemma:mono_stab}]
{
For ease of notation, we will write $b,f,g$ instead of $\hat{b}, \hat{f}, \hat{g}$.
Also, throughout this proof, let 
$\delta \xi=\xi-\bar{\xi}$,
$\delta \cI^g_T=\cI^g_T-\bar{\cI}^g_T$,
  $g(X_T)=g(X_T,\sP_{X_T})$,
$g(\bar{X}_T)=g(\bar{X}_T,\sP_{\bar{X}_T})$ 
and $\bar{g}(\bar{X}_T)=\bar{g}(\bar{X}_T,\sP_{\bar{X}_T})$,
for each $t\in [0,T]$
 let 
 $\delta\cI^b_t=\cI^b_t-\bar{\cI}^b_t$,
  $\delta\cI^\sigma_t=\cI^\sigma_t-\bar{\cI}^\sigma_t$,
   $\delta\cI^f_t=\cI^f_t-\bar{\cI}^f_t$,
  $f(\Theta_t)=f(t,X_t,Y_t,\sP_{\Theta_t})$,
$f(\bar{\Theta}_t)=f(t,\bar{X}_t,\bar{Y}_t,\sP_{\bar{\Theta}_t})$
and $\bar{f}(\bar{\Theta}_t)=\bar{f}(t,\bar{X}_t,\bar{Y}_t,\sP_{\bar{\Theta}_t})$.
Similarly, we introduce the notation
$\sigma(X_t), \sigma(\bar{X}_t), \bar{\sigma}(\bar{X}_t)$ and $b(X_t,Y_t), b(\bar{X}_t,\bar{Y}_t), \bar{b}(\bar{X}_t,\bar{Y}_t)$ for $t\in [0,T]$.

By applying It\^{o}'s formula to $\la Y_t-\bar{Y}_t,X_t-\bar{X}_t\ra$,  we obtain that 
\begin{align*}
&\sE[\la \lambda_0(g(X_T)-\bar{g}(\bar{X}_T))+\delta I^g_T,X_T-\bar{X}_T \ra]
-\sE[\la Y_0-\bar{Y}_0, \delta \xi\ra]
\\
&=\sE\bigg[\int_0^T
\la \lambda_0(b(X_t,Y_t)-\bar{b}(\bar{X}_t,\bar{Y}_t))+\delta \cI^b_t,Y_t-\bar{Y}_t\ra 
+
\la \lambda_0(\sigma(X_t)-\bar{\sigma}(\bar{X}_t))+\delta \cI^\sigma_t,Z_t-\bar{Z}_t \ra
\\
&\quad 
+
\la -\big(\lambda_0(f(\Theta_t)-\bar{f}(\bar{\Theta}_t))+\delta \cI^f_t\big),X_t-\bar{X}_t \ra
\, \d t
\bigg].
\end{align*}
Then, 
by adding and subtracting the terms
$g(\bar{X}_T), b(\bar{X}_t,\bar{Y}_t),\sigma(\bar{X}_t),f(\bar{\Theta}_t)$
and applying the monotonicity property established in Proposition \ref{prop:mcfE2}, we can deduce that
\begin{align*}
&
\sE[\la \lambda_0(g(\bar{X}_T)-\bar{g}(\bar{X}_T))+\delta I^g_T,X_T-\bar{X}_T\ra]
-\sE[\la Y_0-\bar{Y}_0,\delta \xi\ra]
\\
&\le 
\sE\bigg[\int_0^T
\la \lambda_0(b(\bar{X}_t,\bar{Y}_t)-\bar{b}(\bar{X}_t,\bar{Y}_t)+\delta \cI^b_t, Y_t-\bar{Y}_t\ra 
+
\la \lambda_0(\sigma(\bar{X}_t)-\bar{\sigma}(\bar{X}_t))+\delta \cI^\sigma_t,Z_t-\bar{Z}_t \ra
\\
&\quad 
+
\la -\big(\lambda_0(f(\bar{\Theta}_t)-\bar{f}(\bar{\Theta}_t))+\delta \cI^f_t\big),X_t-\bar{X}_t\ra
\, \d t
\bigg]-2(\lambda_1 + \lambda_2)\lambda_0\int_0^T \phi_1(t,X_t,Y_t,\bar{X}_t,\bar{Y}_t) \, \d t,
\end{align*}
with $\phi_1(t,X_t,Y_t,\bar{X}_t,\bar{Y}_t) := \| \hat{\alpha}(t,X_t,Y_t,\mathbb{P}_{(X_t,Y_t)}) - \hat{\alpha}(t,\bar{X}_t, \bar{Y}_t,\mathbb{P}_{(\bar{X}_t,\bar{Y}_t)})  \|^2_{L_2}$,
which together with Young's inequality yields for each $\eps>0$ that
\begin{align}\l{eq:mono_stab_decouple}
\begin{split}
&
2(\lambda_1 + \lambda_2)\lambda_0 \int_0^T
 \phi_1(t,X_t,Y_t,\bar{X}_t,\bar{Y}_t) \, \d t
  \\
  &
  \le
\eps( \|X_T-\bar{X}_T\|_{L^2}^2+\|Y_0-\bar{Y}_0\|_{L^2}^2
 +\|\Theta-\bar{\Theta}\|_{\cH^2}^2)
 +C{\eps}^{-1}\textrm{RHS},
 \end{split}
\end{align}
where $\textrm{RHS}$ denotes the right-hand side of \eqref{eq:mono_stab}.

Now, by \eqref{eq:mono_stab_decouple} and the fact that $\lambda_1 + \lambda_2 >0$,
we have  for all $\eps>0$,
\begin{align}\l{eq:mono_stab_decouple_m<n}
\begin{split}
&
\lambda_0\int_0^T
  \phi_1(t,X_t,Y_t,\bar{X}_t,\bar{Y}_t)\, \d t
  \le
\eps( \|X-\bar{X}\|_{\cS^2}^2+\|Y-\bar{Y}\|_{\cS^2}^2
 +\|Z-\bar{Z}\|_{\cH^2}^2)
 +C{\eps}^{-1}\textrm{RHS}.
 \end{split}
\end{align}
Then, by using the Burkholder-Davis-Gundy  inequality, the definition of \eqref{eq:moc} and \eqref{eq:mfc_coefficients2}, Gronwall's inequality
and the fact that $\lambda_0\in [0,1]$, we can 
deduce that 
\begin{align*}
\|X-\bar{X}\|_{\cS^2}^2
&\le 
C\bigg(
\int_0^T \lambda_0\phi_1(t,X_t,Y_t,\bar{X}_t,\bar{Y}_t)\, \d t
+
\|\xi-\bar{\xi}\|_{L^2}^2
\\
&\quad
+\|\lambda_0 (b(\bar{X},\bar{Y})-\bar{b}(\bar{X},\bar{Y})
+\delta \cI^b\|_{\cH^2}^2
+\|\lambda_0 (\sigma(\bar{X})-\bar{\sigma}(\bar{X}))
+\delta \cI^\sigma\|_{\cH^2}^2
\bigg),
\end{align*}
which together with \eqref{eq:mono_stab_decouple_m<n} yields for all small enough $\eps>0$ that 
\begin{align*}
\begin{split}
&
\|X-\bar{X}\|_{\cS^2}^2
  \le
\eps(\|Y-\bar{Y}\|_{\cS^2}^2
 +\|Z-\bar{Z}\|_{\cH^2}^2)
 +C{\eps}^{-1}\textrm{RHS}.
 \end{split}
\end{align*}
Moreover,  by standard estimates for MV-BSDEs, we can obtain that
\begin{align*}
&\|Y-\bar{Y}\|_{\cS^2}^2
 +\|Z-\bar{Z}\|_{\cH^2}^2
 \\
&  \le
  C\bigg(\|X-\bar{X}\|_{\cS^2}^2
  +
  \|\lambda_0 (g(\bar{X}_T)-\bar{g}(\bar{X}_T))+\delta \cI^g_T\|_{L^2}^2
  +\|\lambda_0 (f(\bar{\Theta})-\bar{f}(\bar{\Theta}))
+\delta\cI^f\|_{\cH^2}^2
\bigg),
\end{align*}
which completes the desired estimate \eqref{eq:mono_stab}.}
\end{proof}

\subsection*{Acknowledgements}

Wolfgang Stockinger is supported by a special Upper Austrian  Government grant.


\end{document}